\pgfplotsset{compat=1.17}
\numberwithin{equation}{section} 
\numberwithin{figure}{section} 
\numberwithin{table}{section} 
\theoremstyle{plain} 
\newtheorem{theorem}{Theorem}[section] 
\newtheorem{lemma}[theorem]{Lemma} 
\newtheorem{corollary}[theorem]{Corollary}
\theoremstyle{remark} 
\newtheorem*{remark}{Remark}
\theoremstyle{definition} 
\newtheorem{conjecture}{Conjecture}[section] 
\newtheorem{question}[conjecture]{Question}
\DeclareMathOperator{\mre}{Re}
\begin{document} 
\title{Hilbert points in Hardy spaces} 
\date{\today} 

\author{Ole Fredrik Brevig} 
\address{Department of Mathematics, University of Oslo, 0851 Oslo, Norway} 
\email{obrevig@math.uio.no}

\author{Joaquim Ortega-Cerd\`{a}} 
\address{Department de Matem\`{a}tiques i Inform\`{a}tica, Universitat de Barcelona, Gran Via 585, 08007 Barcelona, Spain} 
\email{jortega@ub.edu}

\author{Kristian Seip} 
\address{Department of Mathematical Sciences, Norwegian University of Science and Technology (NTNU), NO-7491 Trondheim, Norway} 
\email{kristian.seip@ntnu.no}

\dedicatory{Dedicated, with admiration, to Nikolai Nikolski on the occasion of his 80th birthday.}

\begin{abstract}
	A Hilbert point in $H^p(\mathbb{T}^d)$, for $d\geq1$ and $1\leq p \leq \infty$, is a nontrivial function $\varphi$ in $H^p(\mathbb{T}^d)$ such that $\| \varphi \|_{H^p(\mathbb{T}^d)} \leq \|\varphi + f\|_{H^p(\mathbb{T}^d)}$ whenever $f$ is in $H^p(\mathbb{T}^d)$ and orthogonal to $\varphi$ in the usual $L^2$ sense. When $p\neq 2$, $\varphi$ is a Hilbert point in $H^p(\mathbb{T})$ if and only if $\varphi$ is a nonzero multiple of an inner function. An inner function on $\mathbb{T}^d$ is a Hilbert point in any of the spaces $H^p(\mathbb{T}^d)$, but there are other Hilbert points as well when $d\geq 2$. We investigate the case of $1$-homogeneous polynomials in depth and obtain as a byproduct a new proof of the sharp Khintchin inequality for Steinhaus variables in the range $2<p<\infty$. We also study briefly the dynamics of a certain nonlinear projection operator that characterizes Hilbert points as its fixed points. We exhibit an example of a function $\varphi$ that is a Hilbert point in $H^p(\mathbb{T}^3)$ for $p=2, 4$, but not for any other $p$; this is verified rigorously for $p>4$ but only numerically for $1\leq p<4$. 
\end{abstract}

\subjclass[2020]{Primary 30H10. Secondary 42B30, 60E15}

\thanks{Ortega-Cerd\`{a} was partially supported by the Generalitat de Catalunya (grant 2017 SGR 358) and the Spanish Ministerio de Ciencia, Innovaci\'on y Universidades (project MTM2017-83499-P). Seip was supported in part by the Research Council of Norway grant 275113}

\maketitle

\section{Introduction} The prominence of inner functions (see for example \cite{Nikolski2} or \cite{Nikolski}) arose from Beurling's landmark paper \cite{Beurling} on the shift operator on the Hardy space $H^2(\mathbb{T})$. One usually defines an inner function on the unit disc $\mathbb{D}$ as a bounded analytic function whose nontangential limits are unimodular at almost every point of the unit circle $\mathbb{T}$. In the spirit of Beurling's theorem, one could alternatively define inner functions as the norm $1$ extremizers for point evaluation at the origin in invariant subspaces for the shift operator on $H^2(\mathbb{T})$, with an obvious modification should all functions in the space in question vanish at the origin. 

The point of departure of this paper is another extremal property of inner functions that characterizes them in the one-variable case but leads to a wider and intrinsically interesting class of functions on the $d$-dimensional torus $\mathbb{T}^d$ for $d>1$. The crucial definition is as follows. A nontrivial function $\varphi$ in $H^p(\mathbb{T}^d)$ for $1\leq p \leq \infty$ is said to be a \emph{Hilbert point} in $H^p(\mathbb{T}^d)$ if 
\begin{equation}\label{eq:cpoint} 
	\|\varphi\|_{H^p(\mathbb{T}^d)} \leq \|\varphi+f\|_{H^p(\mathbb{T}^d)} 
\end{equation}
for every $f$ in $H^p(\mathbb{T}^d)$ such that $\langle f, \varphi \rangle=0$, where $\langle \cdot, \cdot \rangle$ is the usual inner product in $L^2(\mathbb{T}^d)$. Here no precaution is needed when $p\geq 2$; when $1\leq p<2$, we declare that $\langle f, \varphi \rangle=0$ if $f$ lies in the closure in $H^p(\mathbb{T}^d)$ of the space of polynomials $g$ for which $\langle g,\varphi \rangle=0$. We will see from \eqref{eq:duality} below that, a posteriori, this precaution is obsolete because a Hilbert point in $H^p(\mathbb{T}^d)$ automatically belongs to the dual space $(H^p(\mathbb{T}^d))^\ast$. All nontrivial functions in $H^2(\mathbb{T}^d)$ are clearly Hilbert points in $H^2(\mathbb{T}^d)$. 

Our usage of the term ``Hilbert point'' is intended to suggest that we are dealing with points in a Banach space around which the space locally ``looks like'' a Hilbert space. This point of view is perhaps most succinctly reinforced by the following interpretation in terms of Banach space geometry. Given a fixed function $\varphi$, we use the notation 
\begin{equation}\label{eq:balls} 
	B_p: = \left\{f\in H^p(\mathbb{T}^d)\,:\, \|f\|_{H^p(\mathbb{T}^d)} \leq \|\varphi\|_{H^p(\mathbb{T}^d)}\right\} 
\end{equation}
on the presumption that $\varphi$ is in $H^p(\mathbb{T}^d)$. When $1<p<\infty$, we will see that a function $\varphi$ in $H^2(\mathbb{T}^d)\cap H^p(\mathbb{T}^d)$ is a Hilbert point in $H^p(\mathbb{T}^d)$ if and only if the supporting hyperplane $T_2$ to $B_2$ that contains the point $\varphi$, coincides with the supporting hyperplane $T_p$ to $B_p$ that contains $\varphi$, in the sense that $T_2 \cap H^p(\mathbb{T}^d) = T_p \cap H^2(\mathbb{T}^d)$. 

When $1\leq p < \infty$, we will investigate Hilbert points in $H^p(\mathbb{T}^d)$ using duality techniques. A consequence of the Hahn--Banach theorem is the following description. A nontrivial function $\varphi$ is a Hilbert point in $H^p(\mathbb{T}^d)$ for $1\leq p < \infty$ if and only if there is a constant $\lambda>0$ such that 
\begin{equation}\label{eq:duality} 
	P\left(|\varphi|^{p-2} \varphi\right) = \lambda \varphi, 
\end{equation}
where $P$ denotes the \emph{Riesz projection} from $L^2(\mathbb{T}^d)$ to $H^2(\mathbb{T}^d)$. The case $p=\infty$ is less amenable to duality arguments; we will see that it often requires separate arguments. 

Recall that $I$ in $H^p(\mathbb{T}^d)$ is said to be an \emph{inner function} if $|I(z)|=1$ for almost every $z$ in $\mathbb{T}^d$. From \eqref{eq:duality} it is evident that if $\varphi = C I$ for a constant $C\neq0$, then $\varphi$ is a Hilbert point in $H^p(\mathbb{T}^d)$ for every $p < \infty$. We obtain the same conclusion for the endpoint $p=\infty$ by taking the limit in \eqref{eq:cpoint}. Our first main result, alluded to above, asserts that there are no other Hilbert points in $H^p(\mathbb{T})$ when $p\neq 2$. 
\begin{theorem}\label{thm:inner1} 
	Fix $1 \leq p \leq \infty$, $p\neq 2$. A nontrivial function $\varphi$ is a Hilbert point in $H^p(\mathbb{T})$ if and only if $\varphi$ is a nonzero multiple of an inner function. 
\end{theorem}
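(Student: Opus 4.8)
The forward implication is already contained in the discussion preceding the statement, so only the reverse implication requires proof: a Hilbert point $\varphi$ in $H^p(\mathbb{T})$ with $p\neq 2$ must be a nonzero multiple of an inner function. Since a nonzero function in $H^p(\mathbb{T})$ satisfies $\log|\varphi|\in L^1(\mathbb{T})$, its boundary values are nonzero almost everywhere, and such a $\varphi$ is a constant multiple of an inner function precisely when $|\varphi|$ is constant almost everywhere on $\mathbb{T}$. The plan is therefore to show that $|\varphi|$ is constant.

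I would treat first the range $1\le p<\infty$, where the Euler--Lagrange identity \eqref{eq:duality} is available. By the remark following \eqref{eq:duality}, the Hilbert point $\varphi$ lies in $(H^p(\mathbb{T}))^\ast$, hence in $H^2(\mathbb{T})$; in particular $\varphi\in L^2(\mathbb{T})$, so that all the products below are integrable. Set $G:=|\varphi|^{p-2}\varphi-\lambda\varphi$. Applying the Riesz projection and using $P\varphi=\varphi$ together with \eqref{eq:duality} gives $PG=0$, so the Fourier coefficients of $G$ are supported in the negative integers; equivalently $\overline{G}$ has spectrum contained in $\{1,2,\dots\}$.

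The key step is to multiply by $\varphi$. Since $\varphi$ has spectrum in $\{0,1,2,\dots\}$ and $\overline{G}$ in $\{1,2,\dots\}$, the product $\varphi\,\overline{G}$ has spectrum in $\{1,2,\dots\}$; on the other hand, using that $\lambda$ and $|\varphi|^{p-2}$ are real, a direct computation yields
\begin{equation*}
 \varphi\,\overline{G}=|\varphi|^{p-2}\varphi\overline{\varphi}-\lambda\varphi\overline{\varphi}=|\varphi|^{p}-\lambda|\varphi|^{2}.
\end{equation*}
The right-hand side is real-valued, while the left-hand side has spectrum in the positive integers; a real-valued function whose Fourier coefficients vanish at all nonpositive indices vanishes identically. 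Hence $|\varphi|^{p}=\lambda|\varphi|^{2}$ almost everywhere, and since $\varphi\neq 0$ almost everywhere and $p\neq 2$, we conclude that $|\varphi|=\lambda^{1/(p-2)}$ is constant, as desired.

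For the endpoint $p=\infty$ the identity \eqref{eq:duality} is unavailable and a separate argument is needed; I expect this to be the main obstacle. Normalising $\|\varphi\|_{H^\infty(\mathbb{T})}=1$, I would first reformulate the Hilbert point property: the affine set $\{\varphi+f:\langle f,\varphi\rangle=0\}$ equals $\{g\in H^\infty(\mathbb{T}):\langle g,\varphi\rangle=\|\varphi\|_{H^2(\mathbb{T})}^2\}$, so $\varphi$ is a Hilbert point if and only if $g=\varphi$ maximises $\mre\langle g,\varphi\rangle=\mre\int_{\mathbb{T}}g\,\overline{\varphi}$ over the unit ball of $H^\infty(\mathbb{T})$. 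By weak-$\ast$ compactness this extremal problem is attained, and under the duality $H^\infty(\mathbb{T})=(L^1(\mathbb{T})/H^1_0(\mathbb{T}))^\ast$ its value equals $\inf\{\|\overline{\varphi}-\psi\|_{L^1(\mathbb{T})}:\psi\in H^1_0(\mathbb{T})\}$, where $H^1_0(\mathbb{T})$ denotes the functions in $H^1(\mathbb{T})$ with spectrum in $\{1,2,\dots\}$. Invoking attainment of this dual problem and the attendant alignment relation, there is $\psi$ in $H^1_0(\mathbb{T})$ with $\varphi(\overline{\varphi}-\psi)=|\overline{\varphi}-\psi|\ge 0$ almost everywhere. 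As $\varphi\overline{\varphi}=|\varphi|^2$ is real, this forces $\varphi\psi$ to be real; but $\varphi\psi$ lies in $H^1_0(\mathbb{T})$, and a real-valued function with spectrum in the positive integers vanishes, so $\psi=0$. The relation then reads $|\varphi|^2=|\varphi|$, whence $|\varphi|=1$ almost everywhere and $\varphi$ is inner. The delicate point is precisely this $L^1/H^1_0$ duality together with the existence of the dual extremal element, which here play the role that the clean identity \eqref{eq:duality} played for $p<\infty$.
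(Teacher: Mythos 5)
Your proof is correct. For $1\leq p<\infty$ your argument is essentially the paper's: the authors also start from $P(|\varphi|^{p-2}\varphi)=\lambda\varphi$ and deduce $|\varphi|^p=\lambda|\varphi|^2$; they do it by pairing against analytic polynomials $f$ and $\overline{f}$ and using self-adjointness of $P$, whereas you phrase the same cancellation as a statement about the spectral support of $\varphi\,\overline{G}$ --- the two formulations are interchangeable. The genuine divergence is at $p=\infty$. The paper factors $\varphi=IE$ into inner and outer parts, tests the dual-norm characterization of Theorem~\ref{thm:duality}~(b) against the one-parameter family $f_\eta=IE^{1+\eta}$, and differentiates at $\eta=0$ to force $\int|E|^2\log(|E|/\|E\|_{H^\infty})\,dm_1=0$, hence $|E|$ constant. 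You instead run the classical dual extremal problem: $\sup\{\mre\int g\overline{\varphi}:\|g\|_{H^\infty}\leq1\}=\operatorname{dist}_{L^1}(\overline{\varphi},H^1_0)$, attainment on both sides, and the alignment relation $\varphi(\overline{\varphi}-\psi)=|\overline{\varphi}-\psi|$, which kills $\psi$ because $\varphi\psi$ is real with spectrum in the positive integers and then gives $|\varphi|^2=|\varphi|$ directly. Both are legitimate; your route avoids the inner--outer factorization and the differentiation under the integral sign, at the cost of invoking proximinality of $H^1_0$ in $L^1(\mathbb{T})$ (existence of the best approximant $\psi$). That fact is classical --- a minimizing sequence is bounded in $L^1$, a weak-$\ast$ limit in $M(\mathbb{T})$ is an analytic measure, and the F.~and~M.~Riesz theorem returns it to $H^1_0$ with lower semicontinuity of the total variation --- but since it carries the whole weight of your endpoint case, you should state and justify it explicitly rather than only gesture at ``attainment of this dual problem.'' With that reference supplied, the proof is complete.
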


The situation becomes rather more complicated when $d\geq 2$. In what follows, we will mainly restrict our attention to one of the simplest nontrivial subspaces of $H^p(\mathbb{T}^d)$, namely that of $1$-homogeneous polynomials. This means that we will be dealing with functions of the form 
\begin{equation}\label{eq:linear} 
	\varphi(z) = \sum_{j=1}^d c_j z_j. 
\end{equation}
Theorem~\ref{thm:linear1} below reveals that there are Hilbert points in this subspace with function theoretic properties that effectively contrast those of inner functions.

Our study of $1$-homogeneous polynomials as Hilbert points is chiefly based on \eqref{eq:duality} and the following remarkable formula. 
\begin{theorem}\label{thm:Plinear} 
	Fix $1 \leq p < \infty$ and suppose that $\varphi(z)=\sum_{j=1}^d c_j z_j$. Then
	\[\left(P|\varphi|^{p-2} \varphi\right)(z) = \frac{p}{2}\sum_{j=1}^d c_j z_j \int_0^1 \int_{\mathbb{T}^d} |\varphi_j(\zeta,r)|^{p-2}\,dm_d(\zeta)\,2rdr\]
	where $\varphi_j(z,r) := \varphi(z_1,\ldots,r z_j,\ldots,z_d)$ for $j=1,2,\ldots,d$. 
\end{theorem}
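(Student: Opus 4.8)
The plan is to reduce the identity to a scalar computation for each surviving Fourier coefficient, and then to evaluate that coefficient by a one-dimensional radial integration in which the awkward cross-terms are eliminated through an integration by parts on the torus.

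First I would exploit the diagonal rotation symmetry of $g:=|\varphi|^{p-2}\varphi$. Since $\varphi$ is $1$-homogeneous, $\varphi(wz)=w\varphi(z)$ for $w\in\mathbb{T}$, hence $g(wz)=wg(z)$; comparing Fourier expansions shows that $\widehat{g}(\alpha)=0$ unless $|\alpha|=\alpha_1+\cdots+\alpha_d=1$. Because the Riesz projection retains only multi-indices $\alpha\geq0$, the only survivors are $\alpha=e_j$, so $Pg(z)=\sum_{j=1}^d a_j z_j$ with $a_j=\langle g,z_j\rangle=\int_{\mathbb{T}^d}|\varphi|^{p-2}\varphi\,\overline{z_j}\,dm_d$. (Here $g\in L^{p'}\subseteq L^1$, so these coefficients are well defined.) It therefore remains to prove that $a_j=\tfrac{p}{2}c_j\int_0^1\beta(r)\,2r\,dr$, where $\beta(r):=\int_{\mathbb{T}^d}|\varphi_j(\zeta,r)|^{p-2}\,dm_d(\zeta)$.

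Second I would introduce the radial interpolant $\Phi(r):=\int_{\mathbb{T}^d}|\varphi_j(z,r)|^{p-2}\varphi_j(z,r)\,\overline{z_j}\,dm_d(z)$, noting that $\Phi(1)=a_j$ while $\Phi(0)=0$ because $\varphi_j(\cdot,0)=\sum_{k\neq j}c_kz_k$ is independent of $z_j$. Writing $g=\varphi_j(z,r)=\eta+c_jrz_j$ with $\eta:=\sum_{k\neq j}c_kz_k$ and differentiating under the integral sign via $\partial_r g=c_jz_j$ gives $\partial_r(|g|^{p-2}g)=\tfrac{p}{2}c_jz_j|g|^{p-2}+\tfrac{p-2}{2}\overline{c_j}\,\overline{z_j}\,|g|^{p-4}g^2$, whence $\Phi'(r)=\tfrac{p}{2}c_j\beta(r)+\tfrac{p-2}{2}\overline{c_j}E(r)$ with $E(r):=\int_{\mathbb{T}^d}\overline{z_j}^{\,2}|g|^{p-4}g^2\,dm_d$. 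The \emph{crux} is then to remove the unknown term $E(r)$: I would integrate the total derivative $\partial_{\theta_j}\big(|g|^{p-2}g\,\overline{z_j}\big)$ over $\mathbb{T}^d$, which vanishes by periodicity in $\theta_j$ (where $z_j=e^{i\theta_j}$ and $\eta$ does not depend on $\theta_j$). Running the same chain rule with $\partial_{\theta_j}g=ic_jrz_j$ produces the relation $\tfrac{p-2}{2}\overline{c_j}\,rE(r)=\tfrac{p}{2}c_jr\beta(r)-\Phi(r)$. Substituting this into the expression for $\Phi'(r)$ collapses everything to the first-order identity $\tfrac{d}{dr}\big(r\Phi(r)\big)=r\Phi'(r)+\Phi(r)=pc_jr\beta(r)$. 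Integrating from $0$ to $1$ and using $\Phi(1)=a_j$ yields exactly $a_j=pc_j\int_0^1 r\beta(r)\,dr=\tfrac{p}{2}c_j\int_0^1\beta(r)\,2r\,dr$, which is the asserted coefficient.

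The main obstacle I anticipate is one of rigor rather than of strategy: justifying both the differentiation under the integral sign and the vanishing of the integrated $\theta_j$-derivative when $p<2$, since then $|\varphi_j|^{p-2}$ and the various derivative integrands become singular along the zero set of $\varphi_j$. For $d\geq2$ this zero set has real codimension two, so the relevant integrals converge for $p>1$ and the manipulations are licit; the endpoint $p=1$, where the $\theta_j$-slices on which $\varphi_j$ vanishes are genuinely non-integrable, should be treated by a regularization, replacing $|\varphi_j|$ by $(|\varphi_j|^2+\varepsilon)^{1/2}$, running the argument for $\varepsilon>0$, and letting $\varepsilon\to0^{+}$ via dominated convergence applied to the bounded function $g=|\varphi|^{p-2}\varphi$.
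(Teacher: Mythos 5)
Your argument is correct, and it reaches the formula by a genuinely different route from the paper's. Both proofs start identically: $1$-homogeneity of $g=|\varphi|^{p-2}\varphi$ reduces everything to the coefficients $a_j=\langle g,z_j\rangle$. The paper then takes $p-2=2n$, expands $\varphi^n$ by the multinomial theorem, computes the relevant Fourier coefficients as combinatorial sums, identifies $\sum_{|\alpha|=n}\binom{n}{\alpha}^2 c^{2\alpha}/(\alpha_j+1)$ with the stated radial integral, and only then reaches general $p$ by rewriting the even-integer identity through the operator $Df(x)=(xf(x))'$, approximating $f(x)=(\delta+x)^{p/2-1}$ in $C^1$ by polynomials, and letting $\delta\to0^{+}$. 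You instead get all $p$ in one stroke: I have checked your two chain-rule computations and the elimination of $E(r)$ via $\int_{\mathbb{T}^d}\partial_{\theta_j}\bigl(|g|^{p-2}g\,\overline{z_j}\bigr)\,dm_d=0$, and they do combine exactly into $\tfrac{d}{dr}\bigl(r\Phi(r)\bigr)=pc_jr\beta(r)$, whence $a_j=\Phi(1)=\tfrac{p}{2}c_j\int_0^1\beta(r)\,2r\,dr$. The trade-off is clear: the paper's core step is purely algebraic and needs no interchange of limits, but pays with a two-stage structure (even integers, then approximation); yours is a single analytic computation valid for every $p$ at once, but must justify differentiation under the integral sign and the absolute continuity of $r\Phi(r)$ when $1\leq p<2$, which you correctly flag. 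One caveat on your proposed fix at the endpoint: replacing $|g|^{p-2}$ by $(|g|^2+\varepsilon)^{(p-2)/2}$ does not reproduce the same ODE verbatim, because the chain rule now produces extra terms proportional to $\varepsilon(|g|^2+\varepsilon)^{(p-4)/2}$ in both the $r$- and $\theta_j$-derivatives; these survive as $\tfrac{d}{dr}\bigl(r\Phi_\varepsilon\bigr)=pc_jr\beta_\varepsilon-(p-2)\varepsilon c_j rF_\varepsilon$ with $F_\varepsilon(r)=\int_{\mathbb{T}^d}(|g|^2+\varepsilon)^{(p-4)/2}\,dm_d$, and you must verify that $\varepsilon\int_0^1 rF_\varepsilon\,dr\to0$. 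This does hold, since $\varepsilon(|g|^2+\varepsilon)^{(p-4)/2}\leq(|g|^2+\varepsilon)^{(p-2)/2}\leq|g|^{p-2}$, which is integrable against $2r\,dr\,dm_d$ by the area-integral interpretation, so dominated convergence closes the argument — but the step deserves to be written out. It is worth noticing that your regularization is essentially the paper's substitution $f(x)=(\delta+x)^{p/2-1}$ in different clothing.
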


The integrals on the right-hand side of the formula in Theorem~\ref{thm:Plinear} only depend on the modulus of the coefficients of the $1$-homogeneous polynomial \eqref{eq:linear}. By symmetry, we can therefore easily obtain the following result from \eqref{eq:duality} and Theorem~\ref{thm:Plinear} for $p<\infty$, and then for $p=\infty$ using \eqref{eq:cpoint}. 
\begin{theorem}\label{thm:linear1} 
	If the nonzero coefficients of $\varphi(z)=\sum_{j=1}^d c_j z_j$ all have the same modulus, then $\varphi$ is a Hilbert point in $H^p(\mathbb{T}^d)$ for every $1\leq p\leq \infty$. 
\end{theorem}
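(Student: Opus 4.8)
The plan is to treat the range $1\le p<\infty$ via the duality criterion \eqref{eq:duality} together with the formula of Theorem~\ref{thm:Plinear}, and then to pass to the endpoint $p=\infty$ by a limiting argument in \eqref{eq:cpoint}. Writing $S=\{\,j : c_j\neq 0\,\}$ and
\[
I_j := \int_0^1\int_{\mathbb{T}^d}|\varphi_j(\zeta,r)|^{p-2}\,dm_d(\zeta)\,2r\,dr,
\]
Theorem~\ref{thm:Plinear} gives $\bigl(P|\varphi|^{p-2}\varphi\bigr)(z)=\frac{p}{2}\sum_{j}c_j z_j\, I_j$. Since $c_j=0$ for $j\notin S$, the desired eigenrelation $P(|\varphi|^{p-2}\varphi)=\lambda\varphi$ will follow at once if I can show that $I_j$ takes a common value $I$ for all $j\in S$; the constant is then $\lambda=\frac{p}{2}I$.

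The equality of the $I_j$ over $S$ is obtained from two invariances of normalized Haar measure $m_d$ on $\mathbb{T}^d$. First, for each $k$ with $c_k\neq 0$ I substitute $\zeta_k\mapsto (\overline{c_k}/|c_k|)\,\zeta_k$; by rotation invariance of $m_d$ this replaces each coefficient $c_k$ by $|c_k|$ without changing $I_j$ (note that the substitution in the $j$th variable also turns the factor $r c_j\zeta_j$ into $r|c_j|\zeta_j$), so that $I_j$ depends only on the moduli $(|c_1|,\dots,|c_d|)$. Under the hypothesis these moduli all equal a common value $a>0$ on $S$. Second, for $j,j'\in S$ I apply the coordinate permutation of $\mathbb{T}^d$ that transposes the $j$th and $j'$th variables; this is measure preserving, fixes the now equal coefficients, and carries $\varphi_j$ to $\varphi_{j'}$. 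Hence $I_j=I_{j'}$, and the common value $I$ exists.

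It remains to record that $\lambda=\frac{p}{2}I>0$: the integrand $|\varphi_j|^{p-2}$ is positive $m_d$-almost everywhere and integrable in the sense already underwriting Theorem~\ref{thm:Plinear}, so $I>0$. By \eqref{eq:duality} this establishes that $\varphi$ is a Hilbert point in $H^p(\mathbb{T}^d)$ for every $1\le p<\infty$. For $p=\infty$ I fix $f\in H^\infty(\mathbb{T}^d)$ with $\langle f,\varphi\rangle=0$; since $H^\infty(\mathbb{T}^d)\subset H^p(\mathbb{T}^d)$, the case just proved yields $\|\varphi\|_{H^p(\mathbb{T}^d)}\le\|\varphi+f\|_{H^p(\mathbb{T}^d)}$ for all finite $p$. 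Because $m_d$ is a probability measure and $\varphi,\varphi+f\in L^\infty(\mathbb{T}^d)$, letting $p\to\infty$ gives $\|g\|_{H^p(\mathbb{T}^d)}\to\|g\|_{H^\infty(\mathbb{T}^d)}$ for $g\in\{\varphi,\varphi+f\}$, and \eqref{eq:cpoint} follows in the limit.

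The substantive analytic work is already carried by Theorem~\ref{thm:Plinear}; the only point requiring genuine care here is the verification, for $1\le p<2$, that $I$ is finite and strictly positive despite the singularity of $|\varphi_j|^{p-2}$ where $\varphi_j$ vanishes---an integrability fact inherited from the validity of Theorem~\ref{thm:Plinear}. Beyond this, the argument is a bookkeeping exercise in the rotation and permutation symmetries of $m_d$, together with the standard $L^p\to L^\infty$ limit on a probability space.
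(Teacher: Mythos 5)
Your proof is correct and follows exactly the route the paper itself indicates: the symmetry of the integrals in Theorem~\ref{thm:Plinear} (they depend only on the coefficient moduli, hence coincide for equal-modulus nonzero coefficients), the duality criterion \eqref{eq:duality} for $1\le p<\infty$, and a limiting argument in \eqref{eq:cpoint} for $p=\infty$. The paper leaves these steps as an ``easy'' consequence; you have simply written them out in full, and the details check out.
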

Functions of the form $\varphi(z)=\sum_{j=1}^d c_j z_j$ whose coefficients all have the same positive modulus, maximize the ratio $\| \varphi \|_{H^{\infty}(\mathbb{T}^d)}/\| \varphi \|_{H^2(\mathbb{T}^d)}$ among all $1$-homogeneous polynomials, in stark contrast to what inner functions do. We are interested in whether there is any other possible choice of coefficients in \eqref{eq:linear} that yields Hilbert points for some $p\neq2$. We will obtain the following partial converse to Theorem~\ref{thm:linear1}. 
\begin{theorem}\label{thm:linear2} 
	Suppose that $2<p\leq\infty$. If $\varphi(z)=\sum_{j=1}^d c_j z_j$ is a Hilbert point in $H^p(\mathbb{T}^d)$, then the nonzero coefficients of $\varphi$ all have the same modulus. 
\end{theorem}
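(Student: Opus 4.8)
\emph{Proof proposal.}
The plan is to treat the ranges $2<p<\infty$ and $p=\infty$ separately, relying on the duality relation \eqref{eq:duality} together with Theorem~\ref{thm:Plinear} in the former case and on a direct extremal construction in the latter. Throughout write $a_j:=|c_j|$, and in the case $p<\infty$ model $\varphi$ by independent Steinhaus variables $X_1,\dots,X_d$. Combining \eqref{eq:duality} with Theorem~\ref{thm:Plinear} and matching the coefficient of $z_j$, one sees at once that $\varphi$ is a Hilbert point exactly when the quantities
\[ A_j:=\int_0^1\int_{\mathbb{T}^d}|\varphi_j(\zeta,r)|^{p-2}\,dm_d(\zeta)\,2r\,dr \]
coincide (with common value $2\lambda/p$) for all $j$ with $c_j\neq 0$. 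It thus suffices to show that two nonzero coefficients with $a_1>a_2$ force $A_1\neq A_2$.

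Set $W:=\sum_{i\geq 3}a_iX_i$, whose law is rotation-invariant, and $R(s,t):=\mathbb{E}\,|sX_1+tX_2+W|^{p-2}$, which is symmetric in $(s,t)$. Carrying out the $\zeta_j$-integration in the definition of $A_j$ as in Theorem~\ref{thm:Plinear} gives $A_1=2\int_0^1\sigma R(\sigma a_1,a_2)\,d\sigma$ and $A_2=2\int_0^1\sigma R(\sigma a_2,a_1)\,d\sigma$, so that
\[ A_1-A_2=2\int_0^1 \sigma\,\bigl[R(\sigma a_1,a_2)-R(\sigma a_2,a_1)\bigr]\,d\sigma. \]
For every $\sigma\in(0,1)$ the pairs $(\sigma a_1,a_2)$ and $(\sigma a_2,a_1)$ have the same product but the second has strictly larger sum of squares, since the difference equals $(a_1^2-a_2^2)(1-\sigma^2)>0$; that is, the second pair is strictly more spread out.

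The crux, and what I expect to be the main obstacle, is therefore the monotonicity statement: for fixed product $st=P$, the quantity $R(s,t)$ is strictly increasing as $(s,t)$ is spread apart. I would prove this by writing $R(s,t)=\mathbb{E}\,\widetilde V(|sX_1+tX_2|)$, where $\widetilde V(\rho):=\mathbb{E}\,|z+W|^{p-2}$ for $|z|=\rho$ is radial because the law of $W$ is rotation-invariant, and is \emph{strictly increasing} on $(0,\infty)$ because $|\cdot+w|^{p-2}$ is strictly subharmonic off $-w$ when $p>2$ (whence $\rho\mapsto\rho\,\widetilde V'(\rho)$ is strictly increasing from $0$). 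Parametrising the fixed-product curve by $s=\sqrt P\,e^{u}$, $t=\sqrt P\,e^{-u}$ and putting $\Theta:=\arg(X_1\overline{X_2})$, which is uniform on the circle, one has $|sX_1+tX_2|^2=P\,(2\cosh 2u+2\cos\Theta)$, which for \emph{each fixed} $\Theta$ is strictly increasing in $|u|$. Hence, in the monotone coupling that keeps $\Theta$ fixed, the modulus $|sX_1+tX_2|$ increases pointwise with $|u|$, and since $\widetilde V$ is strictly increasing, so does $R$. This gives $R(\sigma a_1,a_2)<R(\sigma a_2,a_1)$ for all $\sigma\in(0,1)$, so $A_1<A_2$, contradicting $A_1=A_2$; thus all nonzero $a_j$ are equal.

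Finally, for $p=\infty$ I would argue directly from \eqref{eq:cpoint}. If the nonzero coefficients are not all of equal modulus, choose $j^\ast$ with $a_{j^\ast}=\max_j a_j$ and set $\psi:=\bigl(\|\varphi\|_{H^2(\mathbb{T}^d)}^2/\overline{c_{j^\ast}}\bigr)z_{j^\ast}$ and $f:=\psi-\varphi$. Then $\langle f,\varphi\rangle=0$, while $\|\varphi+f\|_{H^\infty(\mathbb{T}^d)}=\|\psi\|_{H^\infty(\mathbb{T}^d)}=\sum_j a_j^2/a_{j^\ast}$ is strictly smaller than $\|\varphi\|_{H^\infty(\mathbb{T}^d)}=\sum_j a_j$, precisely because $\sum_j a_j(a_{j^\ast}-a_j)>0$ when the $a_j$ are not all equal. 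This contradicts \eqref{eq:cpoint} and completes the proof.
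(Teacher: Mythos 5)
Your proof is correct. For $2<p<\infty$ you follow the same skeleton as the paper: Theorem~\ref{thm:duality}~(a) plus Theorem~\ref{thm:Plinear} reduce the problem to showing that the quantities $A_j$ coincide for all $j$ with $c_j\neq0$, and the heart of the matter is comparing $A_1$ and $A_2$ when $a_1>a_2>0$. Your key monotonicity claim --- that $R(s,t)$ increases as $(s,t)$ is spread apart along a fixed-product curve --- is, after unwinding, the same mechanism as the paper's Lemma~\ref{lem:subharm}: the elementary inequality $\sigma^2a_1^2+a_2^2<\sigma^2a_2^2+a_1^2$ for fixed product and fixed relative phase $\Theta$ is exactly the M\"obius-map inequality $|aw+b|<|a+bw|$ for $w\in\mathbb{D}$, and the strict monotonicity of your radial profile $\widetilde V$ is exactly the strict monotonicity of circular means of the subharmonic function $u\mapsto|u+c|^{p-2}$. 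So the finite-$p$ case is the paper's proof in probabilistic packaging, and it is complete. For $p=\infty$ your route is genuinely different and more elementary: you exhibit the explicit competitor $f=\psi-\varphi$ with $\psi$ a suitable multiple of $z_{j^\ast}$, verify $\langle f,\varphi\rangle=0$, and check $\|\psi\|_{H^\infty(\mathbb{T}^d)}=\sum_ja_j^2/a_{j^\ast}<\sum_ja_j=\|\varphi\|_{H^\infty(\mathbb{T}^d)}$, contradicting \eqref{eq:cpoint} directly. The paper instead computes $\|\varphi\|_{(H^\infty(\mathbb{T}^d))^\ast}=\max_ja_j$ via the contractive projection of Lemma~\ref{lem:P1} and invokes the criterion of Theorem~\ref{thm:duality}~(b); your test function $z_{j^\ast}$ is precisely the extremal in that dual-norm computation, so the two arguments are two sides of the same coin, but yours avoids Lemma~\ref{lem:P1} and part (b) of the duality theorem entirely, at the cost only of the standard identity $\bigl\|\sum_jc_jz_j\bigr\|_{H^\infty(\mathbb{T}^d)}=\sum_j|c_j|$.
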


We conjecture that Theorem~\ref{thm:linear2} is true also for $1 \leq p < 2$. To obtain some evidence supporting this conjecture, we consider the following dynamical system. Let $\varphi_0$ be any $1$-homogeneous polynomial with $\|\varphi_0\|_{H^2(\mathbb{T}^d)}=1$. Based on \eqref{eq:duality} and Theorem~\ref{thm:Plinear}, we iteratively define 
\begin{equation}\label{eq:iterates} 
	\varphi_{n+1}:=\frac{P\left(|\varphi_n|^{p-2} \varphi_n\right)}{\left\|P\left(|\varphi_n|^{p-2} \varphi_n\right)\right\|_{H^2(\mathbb{T}^d)}}. 
\end{equation}
In the range $2<p<\infty$, we can completely describe the behavior of this dynamical system. It turns out that given any $\varphi_0(z)=\sum_{j=1}^d c_j z_j$, the iterates \eqref{eq:iterates} will converge to a Hilbert point $\varphi(z)=\sum_{j=1}^d \widetilde{c_j} z_j$ with $\widetilde{c_j} \neq 0$ if and only if $c_j\neq 0$. 

Based on an analysis of the simple case $d=2$ and numerical experiments in the case $d=3$, we observe that when $1 \leq p < 2$, the iterates will also converge to a Hilbert point, but now to a $1$-homogeneous inner function, i.e., to a unimodular multiple of $z_j$ for some $j$. If this convergence could be established for $d\geq3$, we would have a proof of Theorem~\ref{thm:linear2} also in the range $1 \leq p < 2$. 

There is an interesting connection between $1$-homogeneous polynomials that are Hilbert points in $H^p(\mathbb{T}^d)$ and the sharp Khintchin inequality for Steinhaus variables. To see this, we begin by setting
\[a_p := \min\left(1,\,{\Gamma\left(1+\frac{p}{2}\right)}^\frac{1}{p}\right) \qquad \text{and} \qquad b_p := \max\left(1,\,{\Gamma\left(1+\frac{p}{2}\right)}^\frac{1}{p}\right)\]
for $1 \leq p < \infty$. Khintchin's inequality for Steinhaus variables can be formulated as the estimates 
\begin{equation}\label{eq:Khintchin} 
	a_p \|\varphi\|_{H^2(\mathbb{T}^d)} \leq \|\varphi\|_{H^p(\mathbb{T}^d)} \leq b_p \|\varphi\|_{H^2(\mathbb{T}^d)} 
\end{equation}
for $1$-homogeneous polynomials $\varphi$. The upper estimate when $1 \leq p<2$ and the lower estimate when $2<p\leq \infty$ are trivial consequences of H\"older's inequality. Otherwise, the constants $a_p$ and $b_p$ are optimal as $d\to\infty$. The case $p=1$ was established by Sawa \cite{Sawa85}, and the case $1<p<2$ was proved by Kwapie\'{n} and K\"{o}nig~\cite{KK01}. The final case $2<p<\infty$ is independently due to Baernstein and Culverhouse \cite{BC02} and to Kwapie\'{n} and K\"{o}nig \cite{KK01}. The best constant in \eqref{eq:Khintchin} is also known in the case $0<p<1$ by a result of K\"onig \cite{Konig2014}.

The connection between Hilbert points in $H^p(\mathbb{T}^d)$ and Khintchin's inequality is as follows. 
\begin{lemma}\label{lem:Khintchin} 
	Fix $d\geq1$ and $1\leq p < \infty$. Consider the functional defined on the unit sphere of $\mathbb{C}^d$ by
	\[\mathscr{K}_p(c) := \|c_1 z_1 + \cdots + c_d z_d\|_{H^p(\mathbb{T}^d)}.\]
	Then $c=(c_1,\ldots,c_d)$ is a critical point of $\mathscr{K}_p$ if and only if $\varphi(z) = c_1 z_1 + \cdots + c_d z_d$ is a Hilbert point in $H^p(\mathbb{T}^d)$. 
\end{lemma}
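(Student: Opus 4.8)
The plan is to locate the critical points of $\mathscr{K}_p$ on the unit sphere by a Lagrange multiplier computation and then match the resulting Euler equation to the duality criterion \eqref{eq:duality}. Since $\mathscr{K}_p = F^{1/p}$, where $F(c) := \int_{\mathbb{T}^d}|\varphi|^p\,dm_d > 0$ and $x \mapsto x^{1/p}$ is strictly increasing, a point $c$ is critical for $\mathscr{K}_p$ on the sphere $\{c : \sum_j |c_j|^2 = 1\}$ if and only if it is critical for $F$ there. I would therefore work with $F$ throughout and treat $c_j$ and $\overline{c_j}$ as independent variables via Wirtinger calculus.

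First I would compute the derivatives. Writing $|\varphi|^p = (\varphi\overline{\varphi})^{p/2}$ and using $\partial\overline{\varphi}/\partial\overline{c_k} = \overline{z_k}$, one finds $\partial|\varphi|^p/\partial\overline{c_k} = \tfrac{p}{2}|\varphi|^{p-2}\varphi\,\overline{z_k}$, so that after differentiating under the integral sign
\[
	\frac{\partial F}{\partial \overline{c_k}} = \frac{p}{2}\int_{\mathbb{T}^d}|\varphi|^{p-2}\varphi\,\overline{z_k}\,dm_d = \frac{p}{2}\langle |\varphi|^{p-2}\varphi, z_k\rangle =: \frac{p}{2}\gamma_k,
\]
where $\gamma_k$ is precisely the coefficient of $z_k$ in $P(|\varphi|^{p-2}\varphi)$. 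The constraint $G(c) := \sum_j |c_j|^2$ satisfies $\partial G/\partial \overline{c_k} = c_k$, so criticality on the sphere is the condition that $\tfrac{p}{2}\gamma_k = \mu c_k$ for every $k$ and some real $\mu$, i.e. $\gamma_k = \lambda c_k$ with $\lambda := 2\mu/p$.

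Here I would invoke Theorem~\ref{thm:Plinear}, which guarantees that $P(|\varphi|^{p-2}\varphi)$ is again $1$-homogeneous, hence equals $\sum_k \gamma_k z_k$ with no other nonzero Fourier coefficients. Consequently the system $\gamma_k = \lambda c_k$ for $1 \leq k \leq d$ is equivalent to the single identity $P(|\varphi|^{p-2}\varphi) = \lambda\varphi$. To align this with \eqref{eq:duality} I must verify $\lambda > 0$: pairing $\gamma_k = \lambda c_k$ against $\overline{c_k}$ and summing gives $\lambda = \langle |\varphi|^{p-2}\varphi, \varphi\rangle = \|\varphi\|_{H^p(\mathbb{T}^d)}^p > 0$ on the unit sphere, so positivity is automatic. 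Thus criticality of $c$ is equivalent to \eqref{eq:duality}, which is exactly the assertion that $\varphi$ is a Hilbert point in $H^p(\mathbb{T}^d)$.

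The step demanding the most care is the differentiation under the integral sign that yields the formula for $\partial F/\partial \overline{c_k}$, because $t \mapsto |t|^p$ is only $C^1$ for $p > 1$ and merely Lipschitz at $p = 1$. For $1 < p < \infty$ a dominated-convergence argument applied to the difference quotients suffices, using that $|\varphi|^{p-1}$ is integrable (indeed bounded when $p \geq 2$). For $p = 1$ the integrand $|\varphi|$ fails to be differentiable only on the zero set $\{\varphi = 0\}$, which has $m_d$-measure zero for a nonzero $1$-homogeneous polynomial; I would justify the interchange by a mean-value domination away from this set, so that the formula $\partial F/\partial \overline{c_k} = \tfrac{1}{2}\langle \varphi/|\varphi|, z_k\rangle$ persists. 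Once the derivative formula is secured, the remainder of the argument is routine.
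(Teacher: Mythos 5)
Your proof is correct, but it takes a genuinely different route from the paper's. You derive the Euler--Lagrange equation explicitly via Wirtinger calculus, identify the components of the constrained gradient with the Fourier coefficients $\gamma_k$ of $P(|\varphi|^{p-2}\varphi)$, use the $1$-homogeneity of that projection to upgrade the $d$ scalar equations $\gamma_k=\lambda c_k$ to the single identity $P(|\varphi|^{p-2}\varphi)=\lambda\varphi$, and conclude by the duality characterization of Theorem~\ref{thm:duality}~(a). The paper instead argues geometrically: the Lagrange condition is read as saying that the tangent hyperplane at $\varphi$ to the ball of $H^p_1(\mathbb{T}^d)$ coincides with that of the $H^2_1(\mathbb{T}^d)$-ball, which by convexity yields $\|\varphi+f\|_{H^p(\mathbb{T}^d)}\geq\|\varphi\|_{H^p(\mathbb{T}^d)}$ for all $1$-homogeneous $f$ with $\langle f,\varphi\rangle=0$; the passage from $1$-homogeneous perturbations to arbitrary ones is then accomplished by the contractive projection $P_1$ of Lemma~\ref{lem:P1} rather than by \eqref{eq:duality}. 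Your version buys a cleaner logical dependence --- only Theorem~\ref{thm:duality}~(a) and the elementary observation, made at the start of Section~\ref{sec:linear}, that $P(|\varphi|^{p-2}\varphi)$ is a $1$-homogeneous polynomial (the full formula of Theorem~\ref{thm:Plinear} is not actually needed) --- together with an explicit treatment of differentiability down to $p=1$, a point the paper's proof glosses over (it opens with ``fix $2<p<\infty$'' although the lemma is stated for $1\leq p<\infty$). The paper's version buys the geometric picture paralleling Corollary~\ref{cor:hyperplane} and reuses Lemma~\ref{lem:P1}, which is needed elsewhere anyway. Your verification that the multiplier is automatically $\lambda=\|\varphi\|_{H^p(\mathbb{T}^d)}^p>0$ is a worthwhile detail that makes the match with \eqref{eq:duality} exact in both directions.
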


Combining Theorem~\ref{thm:linear1}, Theorem~\ref{thm:linear2}, and Lemma~\ref{lem:Khintchin} with a computation, we obtain a new proof of the sharp Khintchin inequality for Steinhaus variables in the case $2<p<\infty$. In our proof, the heavy lifting is all done by Theorem~\ref{thm:Plinear}. 

In view of the results presented above, one might be tempted to conjecture that if $\varphi$ is a Hilbert point in $H^p(\mathbb{T}^d)$ for \emph{some} $p\neq2$, then $\varphi$ is a Hilbert point in $H^p(\mathbb{T}^d)$ for \emph{all} $1 \leq p \leq \infty$. We will however show that this is not the true. Specifically, we will consider 
\begin{equation}\label{eq:3hom} 
	\varphi(z) = z_1^3+z_2^3+z_1z_2z_3. 
\end{equation}
Using \eqref{eq:duality} and an argument involving change of variables, we will see that $\varphi$ is a Hilbert point in $H^p(\mathbb{T}^3)$ if and only if a certain Fourier coefficient of the function $|\zeta_1+\zeta_2+\zeta_3|^{p-2}(\zeta_1+\zeta_2+\zeta_3)$ vanishes. This allows us to establish that \eqref{eq:3hom} is a Hilbert point in $H^2(\mathbb{T}^3)$ and $H^4(\mathbb{T}^3)$, but not in $H^p(\mathbb{T}^3)$ for $4<p\leq\infty$. Based on numerical evidence, we conjecture that $\varphi$ is neither a Hilbert point in $H^p(\mathbb{T}^3)$ for $1 \leq p < 2$ nor for $2<p<4$. 

To close this introduction, we give a brief overview of the contents of this paper. In Section~\ref{sec:prelim}, we reformulate our problem using duality techniques and establish Theorem~\ref{thm:inner1}. Section~\ref{sec:linear} is devoted to the proof of Theorem~\ref{thm:Plinear}, Theorem~\ref{thm:linear1} and Theorem~\ref{thm:linear2}. The dynamical system mentioned above is investigated in Section~\ref{sec:dynamics}. In Section~\ref{sec:Khintchin}, we prove Lemma~\ref{lem:Khintchin} and present a new proof of Khintchin's inequality for Steinhaus variables in the range $2<p<\infty$. Finally, the function \eqref{eq:3hom} is discussed in detail in Section~\ref{sec:example}.

\section{Duality reformulation and inner functions} \label{sec:prelim}

Recall that every function $f$ in $L^p(\mathbb{T}^d)$ can be represented by its Fourier series $f(z) \sim \sum_{\alpha \in \mathbb{Z}^d} \widehat{f}(\alpha)\,z^\alpha$, where
\[\widehat{f}(\alpha) := \int_{\mathbb{T}^d} f(z) \,\overline{z^\alpha}\,dm_d(z)\]
and where $m_d$ denotes the Haar measure of the $d$-dimensional torus $\mathbb{T}^d$. The Hardy space $H^p(\mathbb{T}^d)$ is the subspace of $L^p(\mathbb{T}^d)$ comprised of functions $f$ such that $\widehat{f}(\alpha)=0$ for every $\alpha$ in $\mathbb{Z}^d \setminus \mathbb{N}_0^d$, where $\mathbb{N}_0 := \{0,1,2,\ldots\}$.

We need a well-known consequence of the Hahn--Banach theorem concerning orthogonality in $L^p$ spaces, which can be extracted from Shapiro's monograph \cite{Shapiro}. 
\begin{lemma}\label{lem:shapiro} 
	Fix $1 \leq p < \infty$. If $\varphi$ is a nontrivial function in $H^p(\mathbb{T}^d)$ and $Y$ is a closed subspace of $L^p(\mathbb{T}^d)$, then the following are equivalent. 
	\begin{enumerate}
		\item[(i)] $\|\varphi\|_{L^p(\mathbb{T}^d)} \leq \|\varphi+f\|_{L^p(\mathbb{T}^d)}$ for every $f$ in $Y$. 
		\item[(ii)] $\left\langle |\varphi|^{p-2} \varphi, f \right\rangle =0$ for every $f$ in $Y$. 
	\end{enumerate}
\end{lemma}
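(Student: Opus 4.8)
The plan is to recognize the lemma as the classical duality characterization of a minimal-norm element of a coset, specialized to $L^p(\mathbb{T}^d)$, and to prove the two implications directly from the pointwise convexity of $a \mapsto |a|^p$ rather than invoking uniqueness of norming functionals. Throughout I write $\psi := |\varphi|^{p-2}\varphi$ and let $q$ denote the conjugate exponent, $1/p + 1/q = 1$. Since $(p-1)q = p$, H\"older's inequality gives $\psi \in L^q(\mathbb{T}^d)$ with $\|\psi\|_{L^q(\mathbb{T}^d)} = \|\varphi\|_{L^p(\mathbb{T}^d)}^{p-1}$, so the pairing $\langle \psi, f\rangle$ is finite for every $f \in L^p(\mathbb{T}^d)$. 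The one structural fact I would isolate at the outset is that a nontrivial $\varphi \in H^p(\mathbb{T}^d)$ vanishes only on a set of $m_d$-measure zero; this is where the Hardy space hypothesis (as opposed to merely $\varphi \in L^p(\mathbb{T}^d)$) enters, and it is essential at the endpoint $p=1$.

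For (ii) $\Rightarrow$ (i) I would use the subgradient inequality for the convex function $a \mapsto |a|^p$ on $\mathbb{C}\cong\mathbb{R}^2$: for all complex $a,b$ and all $1 \leq p < \infty$,
\[|a+b|^p \geq |a|^p + p\,|a|^{p-2}\mre(\overline{a}\,b),\]
where the last term is read as $0$ when $a=0$. Evaluating at $a=\varphi(z)$, $b=f(z)$ and integrating yields $\|\varphi+f\|_{L^p(\mathbb{T}^d)}^p \geq \|\varphi\|_{L^p(\mathbb{T}^d)}^p + p\,\mre\langle f, \psi\rangle$. Because $Y$ is a complex subspace, condition (ii) is equivalent to $\langle f, \psi\rangle = 0$ for every $f \in Y$ (replace $f$ by $if$ to recover the imaginary part), so the linear term drops out and (i) follows at once.

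For (i) $\Rightarrow$ (ii) I would fix $f \in Y$ and examine $g(t) := \|\varphi + t f\|_{L^p(\mathbb{T}^d)}^p$ for real $t$; since $tf \in Y$, hypothesis (i) makes $t=0$ a global minimum of $g$. The task is to show $g$ is differentiable at $0$ with $g'(0) = p\,\mre\langle f, \psi\rangle$, after which $g'(0)=0$ and a second substitution $f \mapsto if$ give $\langle \psi, f\rangle = 0$, i.e.\ (ii). Differentiation under the integral sign is the crux: for $1<p<\infty$ the integrand $t \mapsto |\varphi + tf|^p$ is continuously differentiable in $t$ and its difference quotients are dominated, via a mean value estimate of the form $\big||a+tb|^p-|a|^p\big|\le C_p |t|(|a|+|b|)^{p-1}|b|$ (which is integrable by H\"older since $(|\varphi|+|f|)^{p-1}\in L^q(\mathbb{T}^d)$ and $|f|\in L^p(\mathbb{T}^d)$), so dominated convergence applies. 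At $p=1$ the integrand fails to be differentiable precisely where $\varphi=0$; but that set is null by the Hardy space fact above, the surviving difference quotients are bounded by $|f|\in L^1(\mathbb{T}^d)$, and dominated convergence again yields $g'(0)=\mre\langle f, \psi\rangle$ with $\psi = \varphi/|\varphi|$.

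The main obstacle, and the only place demanding care, is exactly this differentiation step together with the endpoint $p=1$: when $1<p<2$ the weight $|\varphi|^{p-2}$ is singular on $\{\varphi=0\}$, and when $p=1$ the integrand is nonsmooth there. Both difficulties dissolve once one records that $\{\varphi = 0\}$ is $m_d$-null for nontrivial $\varphi\in H^p(\mathbb{T}^d)$, which simultaneously makes $\psi$ well defined almost everywhere and legitimizes passing the limit inside the integral. Alternatively, one could deduce both implications from the general Hahn--Banach best-approximation theorem as presented in Shapiro's monograph, identifying $\psi/\|\varphi\|_{L^p(\mathbb{T}^d)}^{p-1}$ as the norming functional of $\varphi$ (essentially unique when $p>1$); the direct convexity argument above has the advantage of treating $1\le p<\infty$ uniformly.
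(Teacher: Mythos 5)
Your proof is correct, but it takes a more self-contained route than the paper, which disposes of the lemma in four lines by citing Shapiro's monograph: \cite[Thm.~4.2.1]{Shapiro} for $1<p<\infty$, and \cite[Thm.~4.2.2]{Shapiro} for $p=1$ after invoking \cite[Thm.~3.3.5]{Rudin69} to conclude that $\log|\varphi|\in L^1(\mathbb{T}^d)$ and hence that $\{\varphi=0\}$ is $m_d$-null. You have in effect unpacked Shapiro's theorems: your subgradient inequality $|a+b|^p\geq|a|^p+p|a|^{p-2}\mre(\overline{a}b)$ gives (ii) $\Rightarrow$ (i) by integration, and the differentiation of $t\mapsto\|\varphi+tf\|_{L^p}^p$ at its global minimum $t=0$, justified by the mean value bound and dominated convergence, gives (i) $\Rightarrow$ (ii); the substitution $f\mapsto if$ correctly upgrades $\mre\langle f,\psi\rangle=0$ to $\langle\psi,f\rangle=0$. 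You also isolate precisely the same structural input as the paper, namely that a nontrivial function in $H^p(\mathbb{T}^d)$ vanishes only on a null set, and you correctly identify that this is needed only at $p=1$ (where the integrand $|a+tb|$ fails to be differentiable at $a=0$), whereas for $p>1$ the statement holds for arbitrary $\varphi\in L^p$. What your approach buys is independence from the cited references and a uniform treatment of $1\leq p<\infty$; what the paper's approach buys is brevity and a clean delegation of the convex-analytic machinery to a standard source. One small point of bookkeeping: you should still cite (or prove) the fact that $\log|\varphi|\in L^1(\mathbb{T}^d)$ for nontrivial $\varphi\in H^1(\mathbb{T}^d)$ when $d\geq2$, since in several variables this is a genuine theorem (it is \cite[Thm.~3.3.5]{Rudin69}) and not merely the classical one-variable statement.
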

\begin{proof}
	If $1<p<\infty$, this is a special case of \cite[Thm.~4.2.1]{Shapiro}. If $\varphi$ is a nontrivial function in $H^1(\mathbb{T}^d)$, then $\log|\varphi|$ is in $L^1(\mathbb{T}^d)$ by \cite[Thm.~3.3.5]{Rudin69}. In particular,
	\[m_d\left(\{z \in \mathbb{T}^d\,:\, \varphi(z)=0\}\right)=0.\]
	This means that we obtain the assertion for $p=1$ from \cite[Thm.~4.2.2]{Shapiro}. 
\end{proof}

If $\varphi$ is in $(H^p(\mathbb{T}^d))^\ast$, then the bounded linear functional generated by $\varphi$ on $H^p(\mathbb{T}^d)$ can be represented as $L_\varphi(f):= \langle f, \varphi \rangle$, which implies that 
\begin{equation}\label{eq:dualnorm} 
	\|\varphi\|_{(H^p(\mathbb{T}^d))^\ast} := \sup_{\substack{g \in H^p(\mathbb{T}^d) \\
	g \not \equiv 0}} \frac{|\langle g, \varphi \rangle|}{\|g\|_{H^p(\mathbb{T}^d)}}. 
\end{equation}
We are now ready to reformulate the defining property of Hilbert points using duality. Note that the condition in Theorem~\ref{thm:duality} (a) below coincides with \eqref{eq:duality} discussed in the introduction. 
\begin{theorem}\label{thm:duality} 
	\mbox{} 
	\begin{enumerate}
		\item[(a)] Fix $1 \leq p < \infty$. A nontrivial function $\varphi$ in $H^p(\mathbb{T}^d)$ is a Hilbert point in $H^p(\mathbb{T}^d)$ if and only if there is some constant $\lambda>0$ such that
		\[P\left(|\varphi|^{p-2} \varphi\right) = \lambda \varphi.\]
		\item[(b)] Fix $1 \leq p \leq \infty$. A nontrivial function $\varphi$ in $H^p(\mathbb{T}^d)\cap H^2(\mathbb{T}^d)$ is a Hilbert point in $H^p(\mathbb{T}^d)$ if and only if
		\[\|\varphi\|_{H^p(\mathbb{T}^d)} \|\varphi\|_{(H^p(\mathbb{T}^d))^\ast} = \|\varphi\|_{H^2(\mathbb{T}^d)}^2.\]
	\end{enumerate}
\end{theorem}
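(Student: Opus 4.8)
The plan is to derive both parts from the orthogonality criterion in Lemma~\ref{lem:shapiro}, applied to the closed subspace $Y$ of $L^p(\mathbb{T}^d)$ consisting of those $f\in H^p(\mathbb{T}^d)$ that are orthogonal to $\varphi$ (for $1\le p<2$ this is the closure used in the definition of a Hilbert point). Since the $H^p$ and $L^p$ norms agree on $H^p(\mathbb{T}^d)$, the defining inequality \eqref{eq:cpoint} is exactly condition (i) of Lemma~\ref{lem:shapiro} for this $Y$, so $\varphi$ is a Hilbert point if and only if $\langle|\varphi|^{p-2}\varphi,f\rangle=0$ for every $f$ in $Y$.

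For part (a) I would convert this orthogonality into the stated identity by comparing Fourier coefficients. For the easy direction, assuming $P(|\varphi|^{p-2}\varphi)=\lambda\varphi$: since any $f\in H^p(\mathbb{T}^d)$ is supported on $\mathbb{N}_0^d$, only the analytic part of $|\varphi|^{p-2}\varphi$ contributes to the pairing, so $\langle|\varphi|^{p-2}\varphi,f\rangle=\langle P(|\varphi|^{p-2}\varphi),f\rangle=\lambda\langle\varphi,f\rangle$, which vanishes on $Y$; Lemma~\ref{lem:shapiro} then gives the Hilbert point property. For the converse, I would feed into the orthogonality relation the polynomials $g_{\alpha,\beta}=\overline{\widehat{\varphi}(\beta)}\,z^\alpha-\overline{\widehat{\varphi}(\alpha)}\,z^\beta$ with $\alpha,\beta\in\mathbb{N}_0^d$, which lie in $Y$ because $\langle g_{\alpha,\beta},\varphi\rangle=0$. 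This yields $\widehat{\varphi}(\beta)\,\widehat{|\varphi|^{p-2}\varphi}(\alpha)=\widehat{\varphi}(\alpha)\,\widehat{|\varphi|^{p-2}\varphi}(\beta)$ for all $\alpha,\beta$, so the ratio $\widehat{|\varphi|^{p-2}\varphi}(\alpha)/\widehat{\varphi}(\alpha)$ is a constant $\lambda$ independent of $\alpha$ (and the coefficients of $|\varphi|^{p-2}\varphi$ vanish wherever those of $\varphi$ do). As both $\varphi$ and $P(|\varphi|^{p-2}\varphi)$ are supported on $\mathbb{N}_0^d$, this is precisely $P(|\varphi|^{p-2}\varphi)=\lambda\varphi$. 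Pairing this identity with $\varphi$ then gives $\lambda=\|\varphi\|_{H^p(\mathbb{T}^d)}^p/\|\varphi\|_{H^2(\mathbb{T}^d)}^2>0$.

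For part (b) I would argue directly from the dual norm \eqref{eq:dualnorm}, which makes each claim essentially one line. Testing the supremum against $g=\varphi$ always gives $\|\varphi\|_{H^p(\mathbb{T}^d)}\,\|\varphi\|_{(H^p(\mathbb{T}^d))^\ast}\ge|\langle\varphi,\varphi\rangle|=\|\varphi\|_{H^2(\mathbb{T}^d)}^2$, so only the reverse inequality carries content. If equality holds, then for every admissible $f$ with $\langle f,\varphi\rangle=0$ we have $\|\varphi+f\|_{H^p(\mathbb{T}^d)}\ge|\langle\varphi+f,\varphi\rangle|/\|\varphi\|_{(H^p(\mathbb{T}^d))^\ast}=\|\varphi\|_{H^2(\mathbb{T}^d)}^2/\|\varphi\|_{(H^p(\mathbb{T}^d))^\ast}=\|\varphi\|_{H^p(\mathbb{T}^d)}$, so $\varphi$ is a Hilbert point; this works verbatim for $p=\infty$. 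Conversely, if $\varphi$ is a Hilbert point I would decompose an arbitrary $g\in H^p(\mathbb{T}^d)$ as $g=a\varphi+f$ with $a=\langle g,\varphi\rangle/\|\varphi\|_{H^2(\mathbb{T}^d)}^2$ and $\langle f,\varphi\rangle=0$; applying \eqref{eq:cpoint} to $f/a$ gives $\|\varphi\|_{H^p(\mathbb{T}^d)}\le\|g\|_{H^p(\mathbb{T}^d)}/|a|$, whence $|\langle g,\varphi\rangle|=|a|\,\|\varphi\|_{H^2(\mathbb{T}^d)}^2\le\|\varphi\|_{H^2(\mathbb{T}^d)}^2\,\|g\|_{H^p(\mathbb{T}^d)}/\|\varphi\|_{H^p(\mathbb{T}^d)}$, and the supremum yields the missing inequality, i.e. equality. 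For $1\le p<\infty$ one may instead combine part (a) with H\"older's inequality applied to $|\langle g,\varphi\rangle|=\lambda^{-1}|\langle g,|\varphi|^{p-2}\varphi\rangle|$.

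I expect the main obstacle to be the range $1\le p<2$, and especially the endpoint $p=1$. There $|\varphi|^{p-2}\varphi$ only belongs to $L^{p'}(\mathbb{T}^d)$, and one must check that its Riesz projection is a well-defined element supported on $\mathbb{N}_0^d$; for $1<p<2$ this follows because $P$ is bounded on $L^{p'}(\mathbb{T}^d)$, which moreover forces $\varphi=\lambda^{-1}P(|\varphi|^{p-2}\varphi)\in L^{p'}\subset L^2$, so that the pairing giving $\lambda>0$ is legitimate. At $p=1$ the projection is no longer bounded on $L^\infty$, and justifying both the coefficient identity and the membership $\varphi\in L^2$ underlying positivity of $\lambda$ will require the extra input that $\log|\varphi|\in L^1(\mathbb{T}^d)$ already used in Lemma~\ref{lem:shapiro}. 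A second, more routine point is that for $1\le p<2$ the decomposition $g=a\varphi+f$ in part (b) must produce an $f$ lying in the closed subspace $Y$ against which the Hilbert point inequality is posited; this I would handle by restricting to polynomial $g$ and using density, together with the a posteriori fact, furnished by part (a), that $\varphi\in(H^p(\mathbb{T}^d))^\ast$.
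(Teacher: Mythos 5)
Your proposal is correct, and parts of it coincide with the paper: the sufficiency direction of (a) (push the pairing through $P$ and invoke the implication (ii) $\implies$ (i) of Lemma~\ref{lem:shapiro}) and all of part (b) (the one-line lower bound from \eqref{eq:dualnorm}, the decomposition $g=a\varphi+f$ for necessity, and testing against $g=\varphi+f$ for sufficiency) are essentially the paper's arguments. Where you genuinely diverge is the converse of (a). The paper sets $\psi:=P(|\varphi|^{p-2}\varphi)$, observes that $Y$ is the kernel of both $L_\varphi$ and $L_\psi$, decomposes $H^q(\mathbb{T}^d)$ (resp.\ $H^p$) along $\psi$ (resp.\ $\varphi$) depending on which of the two lies in $H^2$, and identifies the functional $L_\varphi$ explicitly to conclude that $\varphi$ is a positive multiple of $\psi$; this is a codimension-one duality argument. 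You instead feed the explicit test polynomials $g_{\alpha,\beta}=\overline{\widehat{\varphi}(\beta)}z^\alpha-\overline{\widehat{\varphi}(\alpha)}z^\beta$ into the orthogonality relation and read off the proportionality of Fourier coefficients directly. Your route is more elementary and sidesteps the question of whether $Y$ exhausts the kernel of $L_\psi$, at the cost of having to justify the coefficientwise pairing $\langle|\varphi|^{p-2}\varphi,\varphi\rangle=\lambda\sum_\alpha|\widehat{\varphi}(\alpha)|^2$ when $\varphi$ is not a priori in $H^2$; this is done with Fej\'{e}r means and monotone convergence, and it simultaneously yields $\lambda>0$ and $\varphi\in H^2(\mathbb{T}^d)$. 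One caveat you raise is a non-issue: at $p=1$ one does not need $P$ to be bounded on $L^\infty$, since $|\varphi|^{-1}\varphi\in L^\infty(\mathbb{T}^d)\subset L^2(\mathbb{T}^d)$ and the paper only ever uses $P$ as the projection $L^2\to H^2$; what $\log|\varphi|\in L^1$ buys (inside Lemma~\ref{lem:shapiro}) is merely that $|\varphi|^{-1}\varphi$ is defined almost everywhere.
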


Part (a) implies that a Hilbert point in $H^p(\mathbb{T}^d)$ also belongs to the dual space $(H^p(\mathbb{T}^d))^\ast$. This means that the condition of part (b) that $\varphi$ be in $H^2(\mathbb{T}^d)$ is automatically verified when $\varphi$ is a Hilbert point in $H^p(\mathbb{T}^d)$. The assumption that $\varphi$ be in $H^2(\mathbb{T}^d)$ is only needed to exclude from the statement the claim that if $1\leq p < 2$, then any $\varphi$ in $H^p(\mathbb{T}^d) \setminus H^2(\mathbb{T}^d)$ is a Hilbert point in $H^p(\mathbb{T}^d)$.

The formula in part (b) further justifies our usage of the term ``Hilbert point'', since the identity $\| \varphi \|_{\mathscr{H}} \| \varphi \|_{\mathscr{H}^\ast} =\| \varphi\|_{\mathscr{H}}^2 $ holds for all vectors $\varphi$ in a Hilbert space $\mathscr{H}$ by the Riesz representation theorem.
\begin{remark}
	Theorem~\ref{thm:linear1} can also be deduced from \cite[Lem.~5]{Brevig19} and Theorem~\ref{thm:duality} (b), in addition to the proof based on Theorem~\ref{thm:Plinear} and Theorem~\ref{thm:duality} (a) mentioned above. 
\end{remark}
\begin{proof}
	[Proof of Theorem~\ref{thm:duality} (a)] Throughout the proof, we will apply Lemma~\ref{lem:shapiro} with $Y$ as the closure in $L^p(\mathbb{T}^d)$ of the set of analytic polynomials $g$ satisfying $\langle g, \varphi\rangle =0$.
	
	We assume first that $P\left(|\varphi|^{p-2} \varphi\right) = \lambda \varphi$ for some $\lambda>0$. By the implication (ii) $\implies$ (i) in Lemma~\ref{lem:shapiro}, this means that
	\[\|\varphi\|_{H^p(\mathbb{T}^d)}\leq \| \varphi+f\|_{H^p(\mathbb{T}^d)} \]
	for every $f$ in $Y$. This shows that $\varphi$ is a Hilbert point in $H^p(\mathbb{T}^d)$.
	
	To prove the reverse implication, we begin by assuming that $\varphi$ is a Hilbert point in $H^p(\mathbb{T}^d)$. By the implication (i) $\implies$ (ii) in Lemma~\ref{lem:shapiro}, we have that
	\[\left\langle |\varphi|^{p-2} \varphi, f \right\rangle = 0 \]
	for every $f$ in $Y$. But this means that the function $\psi:=P\left(|\varphi|^{p-2} \varphi\right)$ also has the property that $\langle f,\psi \rangle =0$ for all $f$ in $Y$. Since $\varphi$ is in $H^p(\mathbb{T}^d)$ and $\psi$ is in $(H^p(\mathbb{T}^d))^\ast$, at least one of them belongs to $H^2(\mathbb{T}^d)$. If $1\leq p<2$ so that $\psi$ is in $H^2(\mathbb{T}^d)$, then every $f$ in $H^p(\mathbb{T}^d)$ can be decomposed as
	\[f=\frac{\langle f, \psi \rangle}{\| \psi\|_{H^2(\mathbb{T}^d)}^2} \psi + h, \]
	where $h$ belongs to $Y$. Since $H^{q}(\mathbb{T}^d)$ is contained in $H^p(\mathbb{T}^d)$ for $q=p/(p-1)$, this decomposition is in particular valid for every $f$ in $H^q(\mathbb{T}^d)$. It follows that the action of the functional $L_{\varphi}$ on $H^q(\mathbb{T}^d)$ can be computed explicitly:
	\[L_{\varphi}(f)=\frac{\langle f, \psi \rangle}{\| \psi\|_{H^2(\mathbb{T}^d)}^2} \langle \psi, \varphi \rangle.\]
	Since $\langle \psi, \varphi \rangle = \langle |\varphi|^{p-2} \varphi, \varphi\rangle = \|\varphi\|_{H^p(\mathbb{T}^d)}^p$, this means that $\varphi$ must be a positive multiple of $\psi$. When $2<p<\infty$, we may argue in the same way, with the roles of $\varphi$ and $\psi$ reversed. 
\end{proof}
\begin{proof}
	[Proof of Theorem~\ref{thm:duality} (b)] If $\varphi$ is a Hilbert point in $H^p(\mathbb{T}^d)$, then $\varphi$ is also in the dual space $(H^p(\mathbb{T}^d))^\ast$. When $2 \leq p \leq \infty$, this is trivial. When $1 \leq p < 2$, this follows from Theorem~\ref{thm:duality} (a) and the fact that $|\varphi|^{p-2}\varphi$ is in $L^q(\mathbb{T}^d)$, where $q=p/(p-1)$. The inequality
	\[ \|\varphi\|_{H^p(\mathbb{T}^d)} \|\varphi\|_{(H^p(\mathbb{T}^d))^\ast} \geq \|\varphi\|_{H^2(\mathbb{T}^2)}^2\]
	holds automatically by \eqref{eq:dualnorm}, so it suffices to prove that $\varphi$ is a Hilbert point in $H^p(\mathbb{T}^d)$ if and only if the reverse inequality 
	\begin{equation}\label{eq:reverse} 
		\|\varphi\|_{H^p(\mathbb{T}^d)} \|\varphi\|_{(H^p(\mathbb{T}^d))^\ast} \leq \|\varphi\|_{H^2(\mathbb{T}^2)}^2 
	\end{equation}
	is verified.
	
	We begin with the necessity of \eqref{eq:reverse}. To this end, we assume that $\varphi$ is a Hilbert point in $H^p(\mathbb{T}^d)$. Since $\varphi$ is in $(H^p(\mathbb{T}^d))^\ast$ and since $H^p(\mathbb{T}^d) \cap (H^p(\mathbb{T}^d))^\ast\subset H^2(\mathbb{T}^d)$, we may decompose every $g$ in $H^p(\mathbb{T}^d)$ as 
	\begin{equation}\label{eq:orthodecomp} 
		g = \frac{\langle g, \varphi \rangle}{\|\varphi\|_{H^2(\mathbb{T}^d)}^2} \varphi + \left(g- \frac{\langle g, \varphi \rangle}{\|\varphi\|_{H^2(\mathbb{T}^d)}^2} \varphi\right). 
	\end{equation}
	If $g\not\equiv 0$, then we use the decomposition \eqref{eq:orthodecomp} and the assumption that $\varphi$ is a Hilbert point in $H^p(\mathbb{T})$ to see that
	\[\frac{|\langle g, \varphi \rangle|}{\|g\|_{H^p(\mathbb{T}^d)}} \leq \frac{\|\varphi\|_{H^2(\mathbb{T}^d)}^2}{\|\varphi\|_{H^p(\mathbb{T}^d)}}.\]
	Since $g$ is arbitrary, we get the desired inequality \eqref{eq:reverse}.
	
	To prove the sufficiency of \eqref{eq:reverse}, we suppose next that $\varphi$ satisfies \eqref{eq:reverse}. By \eqref{eq:dualnorm} we then get that
	\[\frac{|\langle g, \varphi \rangle|}{\|g\|_{H^p(\mathbb{T}^d)}} \leq \frac{\|\varphi\|_{H^2(\mathbb{T}^d)}}{\|\varphi\|_{H^p(\mathbb{T}^d)}}\]
	for every nontrivial $g$ in $H^p(\mathbb{T}^d)$. In particular, choosing $g = \varphi + f$ with $\langle f, \varphi \rangle=0$, we see that $\varphi$ is a Hilbert point in $H^p(\mathbb{T}^d)$. 
\end{proof}

We will now formulate three corollaries of Theorem~\ref{thm:duality}. The first just makes explicit an immediate consequence of the fact that a Hilbert point $\varphi$ in any of the spaces $H^p(\mathbb{T}^d)$ is in $H^2(\mathbb{T}^d)\cap (H^p(\mathbb{T}^d))^\ast$. Then the orthogonal projection
\[ P_\varphi f :=\frac{\langle f,\varphi \rangle}{\|\varphi\|_{H^2(\mathbb{T}^d)}} \varphi \]
is a well defined operator on $H^p(\mathbb{T}^d)$, and we obtain the following from part (b). 
\begin{corollary}\label{cor:oproj} 
	A nontrivial function $\varphi$ in $H^p(\mathbb{T}^d)$ is a Hilbert point in $H^p(\mathbb{T}^d)$ for $1\leq p \leq \infty$ if and only if $\varphi$ belongs to $H^2(\mathbb{T}^d)\cap (H^p(\mathbb{T}^d))^\ast$ and $P_\varphi$ is a contraction on $H^p(\mathbb{T}^d)$. 
\end{corollary}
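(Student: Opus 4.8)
The plan is to read the corollary directly off Theorem~\ref{thm:duality}~(b) by identifying the operator norm of $P_\varphi$ with the normalized product $\|\varphi\|_{H^p(\mathbb{T}^d)}\,\|\varphi\|_{(H^p(\mathbb{T}^d))^\ast}$. First I would dispose of the well-definedness of $P_\varphi$. As recorded just before the statement, a Hilbert point $\varphi$ automatically lies in $H^2(\mathbb{T}^d)\cap(H^p(\mathbb{T}^d))^\ast$: for $2\le p\le\infty$ this is immediate from the inclusions $H^p(\mathbb{T}^d)\subseteq H^2(\mathbb{T}^d)$ and $H^2(\mathbb{T}^d)\subseteq (H^p(\mathbb{T}^d))^\ast$, while for $1\le p<2$ it follows from Theorem~\ref{thm:duality}~(a), since then $\varphi=\lambda^{-1}P(|\varphi|^{p-2}\varphi)$ belongs to $H^2(\mathbb{T}^d)$ and $|\varphi|^{p-2}\varphi\in L^{q}(\mathbb{T}^d)$ with $q=p/(p-1)$ exhibits $\varphi$ as an element of $(H^p(\mathbb{T}^d))^\ast$. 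Conversely, whenever $\varphi\in H^2(\mathbb{T}^d)\cap(H^p(\mathbb{T}^d))^\ast$, the functional $f\mapsto\langle f,\varphi\rangle$ is bounded on $H^p(\mathbb{T}^d)$ and its value is multiplied by the fixed element $\varphi\in H^p(\mathbb{T}^d)$, so $P_\varphi$ is a well-defined bounded operator on $H^p(\mathbb{T}^d)$. This reduces both implications to a single norm computation.

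The key step is then to evaluate the operator norm of $P_\varphi$. Because $P_\varphi$ is the orthogonal projection onto $\mathbb{C}\varphi$, we have $P_\varphi f=\langle f,\varphi\rangle\,\|\varphi\|_{H^2(\mathbb{T}^d)}^{-2}\,\varphi$, whence $\|P_\varphi f\|_{H^p(\mathbb{T}^d)}=|\langle f,\varphi\rangle|\,\|\varphi\|_{H^2(\mathbb{T}^d)}^{-2}\,\|\varphi\|_{H^p(\mathbb{T}^d)}$. Taking the supremum over $f$ with $\|f\|_{H^p(\mathbb{T}^d)}\le 1$ and invoking the definition \eqref{eq:dualnorm} of the dual norm gives
\[\|P_\varphi\|_{H^p(\mathbb{T}^d)\to H^p(\mathbb{T}^d)}=\frac{\|\varphi\|_{H^p(\mathbb{T}^d)}\,\|\varphi\|_{(H^p(\mathbb{T}^d))^\ast}}{\|\varphi\|_{H^2(\mathbb{T}^d)}^2}.\]
Hence $P_\varphi$ is a contraction if and only if $\|\varphi\|_{H^p(\mathbb{T}^d)}\,\|\varphi\|_{(H^p(\mathbb{T}^d))^\ast}\le\|\varphi\|_{H^2(\mathbb{T}^d)}^2$.

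To conclude, I would recall that the reverse inequality $\|\varphi\|_{H^p(\mathbb{T}^d)}\,\|\varphi\|_{(H^p(\mathbb{T}^d))^\ast}\ge\|\varphi\|_{H^2(\mathbb{T}^d)}^2$ always holds, as it follows from \eqref{eq:dualnorm} by testing against $g=\varphi$, exactly as used in the proof of Theorem~\ref{thm:duality}~(b). Consequently $P_\varphi$ is a contraction precisely when $\|\varphi\|_{H^p(\mathbb{T}^d)}\,\|\varphi\|_{(H^p(\mathbb{T}^d))^\ast}=\|\varphi\|_{H^2(\mathbb{T}^d)}^2$, and Theorem~\ref{thm:duality}~(b) identifies this equality with $\varphi$ being a Hilbert point among functions in $H^p(\mathbb{T}^d)\cap H^2(\mathbb{T}^d)$. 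Combining this equivalence with the membership discussion of the first paragraph yields both directions of the corollary, uniformly in $1\le p\le\infty$.

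Since all the analytic substance has already been isolated in Theorem~\ref{thm:duality}, I do not anticipate a genuine obstacle; the only points requiring care are the well-definedness of $P_\varphi$ for $1\le p<2$, where one must invoke part~(a) to place $\varphi$ simultaneously in $H^2(\mathbb{T}^d)$ and $(H^p(\mathbb{T}^d))^\ast$, and the endpoint $p=\infty$, where the pairing and the supremum in \eqref{eq:dualnorm} are read as the action of $\varphi$ on $H^\infty(\mathbb{T}^d)$. Both of these are already subsumed by the hypotheses and conclusions of Theorem~\ref{thm:duality}.
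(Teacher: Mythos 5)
Your proof is correct and follows essentially the same route as the paper, which presents the corollary as an immediate consequence of Theorem~\ref{thm:duality}~(b) with the operator-norm identity $\|P_\varphi\| = \|\varphi\|_{H^p(\mathbb{T}^d)}\|\varphi\|_{(H^p(\mathbb{T}^d))^\ast}/\|\varphi\|_{H^2(\mathbb{T}^d)}^2$ left implicit; you have simply spelled out that computation and the membership discussion, both of which the paper also records in the surrounding text. Note that your normalization $\|\varphi\|_{H^2(\mathbb{T}^d)}^{-2}$ is the right one for $P_\varphi$ to be an idempotent and for the equivalence to match part~(b); the paper's displayed formula for $P_\varphi$ divides by $\|\varphi\|_{H^2(\mathbb{T}^d)}$ only, which appears to be a typographical slip.
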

The Hahn--Banach theorem supplies a contractive projection onto every one-dimensional subspace of a Banach space. Corollary~\ref{cor:oproj} can be reformulated as follows. If $\varphi$ is in $H^2(\mathbb{T}^d)\cap (H^p(\mathbb{T}^d))^\ast$, then $\varphi$ is a Hilbert point in $H^p(\mathbb{T}^d)$ if and only if the projection from $H^p(\mathbb{T}^d)$ to $\operatorname{span}(\{\varphi\})$ coincides with the orthogonal projection from $H^2(\mathbb{T}^d)$ to $\operatorname{span}(\{\varphi\})$. We refer to \cite{LRR05} and our recent paper \cite{BOS21} for studies of other contractive projections on Hardy spaces.

We next come back to our interpretation of Hilbert points in terms of Banach space geometry. We retain the notation from the introduction (see \eqref{eq:balls}) and stress that the supporting hyperplane $T_p$ of $B_p$ is well defined since $H^p(\mathbb{T}^d)$ is uniformly convex when $1<p<\infty$.
\begin{corollary}\label{cor:hyperplane} 
	Suppose that $\varphi$ is in $H^2(\mathbb{T}^d)\cap H^p(\mathbb{T}^d)$ with $1<p<\infty$. Then $\varphi$ is a Hilbert point in $H^p(\mathbb{T}^d)$ if and only if $T_p\cap H^2(\mathbb{T}^d) = T_2\cap H^p(\mathbb{T}^d)$. 
\end{corollary}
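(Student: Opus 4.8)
The plan is to identify both supporting hyperplanes explicitly and then recognize that their coincidence is exactly the proportionality condition of Theorem~\ref{thm:duality}~(a). Since $1<p<\infty$, the space $H^p(\mathbb{T}^d)$ is smooth (its norm is Gâteaux differentiable away from the origin, because $H^p(\mathbb{T}^d)$ is uniformly convex with uniformly convex dual), so $T_p$ is the unique supporting hyperplane at $\varphi$ and its norming functional is the derivative of the $L^p$-norm, represented by $|\varphi|^{p-2}\varphi \in L^q(\mathbb{T}^d)$ with $q=p/(p-1)$. A direct computation gives $\langle \varphi, |\varphi|^{p-2}\varphi\rangle = \|\varphi\|_{H^p(\mathbb{T}^d)}^p$, so I would write
\[ T_p = \bigl\{ g \in H^p(\mathbb{T}^d) : \mre\langle g, |\varphi|^{p-2}\varphi\rangle = \|\varphi\|_{H^p(\mathbb{T}^d)}^p \bigr\}, \qquad T_2 = \bigl\{ g \in H^2(\mathbb{T}^d) : \mre\langle g, \varphi\rangle = \|\varphi\|_{H^2(\mathbb{T}^d)}^2 \bigr\}. \]
Both hyperplanes pass through the common point $\varphi$, and both defining functionals take the positive value $\|\varphi\|_{H^p(\mathbb{T}^d)}^p$, respectively $\|\varphi\|_{H^2(\mathbb{T}^d)}^2$, at $\varphi$.

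Next I would observe that both intersections in the statement live in the single space $X := H^2(\mathbb{T}^d) \cap H^p(\mathbb{T}^d)$, since $T_p \subset H^p(\mathbb{T}^d)$ and $T_2 \subset H^2(\mathbb{T}^d)$. For $g$ in $H^p(\mathbb{T}^d)$ the spectrum of $g$ is contained in $\mathbb{N}_0^d$, hence $\langle g, |\varphi|^{p-2}\varphi\rangle = \langle g, P(|\varphi|^{p-2}\varphi)\rangle$. Thus on $X$ the two objects $T_p\cap H^2(\mathbb{T}^d)$ and $T_2\cap H^p(\mathbb{T}^d)$ are the level sets of the complex-linear functionals $L_p(g) := \langle g, P(|\varphi|^{p-2}\varphi)\rangle$ and $L_2(g) := \langle g, \varphi\rangle$, both of which are nonzero at $\varphi$. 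Two affine hyperplanes through the common point $\varphi$ coincide if and only if they are parallel, that is, if and only if $\mre L_p = s\,\mre L_2$ on $X$ with $s = \|\varphi\|_{H^p(\mathbb{T}^d)}^p / \|\varphi\|_{H^2(\mathbb{T}^d)}^2 > 0$; and since the real part of a complex-linear functional determines it through $L(g) = \mre L(g) - i\,\mre L(ig)$, this parallelism is equivalent to the genuine identity $L_p = s L_2$ on $X$.

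Finally I would translate $L_p = s L_2$ into the duality criterion. This identity says $\langle g, P(|\varphi|^{p-2}\varphi) - s\varphi\rangle = 0$ for every $g$ in $X$, in particular for every monomial $z^\alpha$ with $\alpha\in\mathbb{N}_0^d$; as $P(|\varphi|^{p-2}\varphi) - s\varphi$ has spectrum in $\mathbb{N}_0^d$, all of its Fourier coefficients vanish and therefore $P(|\varphi|^{p-2}\varphi) = s\varphi$ with $s>0$, which is precisely the assertion of Theorem~\ref{thm:duality}~(a) that $\varphi$ is a Hilbert point. Conversely, if $\varphi$ is a Hilbert point then $P(|\varphi|^{p-2}\varphi) = \lambda\varphi$; pairing with $\varphi$ gives $\|\varphi\|_{H^p(\mathbb{T}^d)}^p = \lambda\|\varphi\|_{H^2(\mathbb{T}^d)}^2$, so $\lambda = s$ and the functionals are proportional with exactly the constant needed for the hyperplanes to coincide. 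I expect the only delicate points to be the bookkeeping of the normalizing constants and the passage from equality of real parts to equality of complex-linear functionals; the remainder follows mechanically from the spectral containment and Theorem~\ref{thm:duality}~(a).
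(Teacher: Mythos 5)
Your argument is correct and follows essentially the same route as the paper: both identify $T_p$ and $T_2$ with level sets of the functionals induced by $|\varphi|^{p-2}\varphi$ (equivalently, after projecting, by $P(|\varphi|^{p-2}\varphi)$) and by $\varphi$, and then reduce the coincidence of the two hyperplanes to the proportionality $P(|\varphi|^{p-2}\varphi)=\lambda\varphi$ of Theorem~\ref{thm:duality}~(a). The only cosmetic difference is that you justify the identification of $T_p$ via smoothness of $H^p(\mathbb{T}^d)$ and the explicit norming functional, whereas the paper routes the same fact through Lemma~\ref{lem:shapiro}.
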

\begin{proof}
	We begin by assuming that $\varphi$ is a Hilbert point in $H^p(\mathbb{T}^d)$. Then the inclusion $T_2\cap H^p(\mathbb{T}^d) \subset T_p\cap H^2(\mathbb{T}^d)$ is immediate. To prove the reverse inclusion, we begin by noting that if $f$ is in $T_p$, then
	\[\|\varphi + f \|_{H^p(\mathbb{T}^d)} \geq \|\varphi\|_{H^p(\mathbb{T}^d)}.\]
	Thus Lemma~\ref{lem:shapiro} implies that $\langle f, |\varphi|^{p-2}\varphi \rangle = 0$ for every $f$ in $T_p$. Therefore, since $P$ is self-adjoint, we have
	\[ \left\langle f, P(|\varphi|^{p-2}\varphi) \right\rangle = 0. \]
	Now invoking Theorem~\ref{thm:duality} (a), we see that $\langle f, \varphi \rangle = 0$, whence $f$ is in $T_2$.
	
	We assume next that $T_p\cap H^2(\mathbb{T}^d) = T_2\cap H^p(\mathbb{T}^d)$. If $\langle f,\varphi \rangle =0$, then $f$ will be in $T_p$ which implies that $\|\varphi+f\|_{H^p(\mathbb{T}^d)} \geq \|\varphi\|_{H^p(\mathbb{T}^d)}$. This means by definition that $\varphi$ is a Hilbert point in $H^p(\mathbb{T}^d)$. 
\end{proof}

As mentioned in the introduction, the following result is a direct consequence of Theorem~\ref{thm:duality} (a) for $p<\infty$ and by a limiting argument for $p=\infty$. It is also possible to deduce this result from Theorem~\ref{thm:duality} (b), \eqref{eq:dualnorm}, and H\"older's inequality. 
\begin{corollary}\label{cor:inneryes} 
	Fix $d\geq1$ and suppose that $\varphi= CI$ for a constant $C\neq0$ and an inner function $I$. Then $\varphi$ is a Hilbert point in $H^p(\mathbb{T}^d)$ for every $1 \leq p \leq \infty$. 
\end{corollary}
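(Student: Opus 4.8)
The plan is to verify the criterion of Theorem~\ref{thm:duality}~(a) directly when $1\le p<\infty$, and then pass to the limit for $p=\infty$. The key observation is that an inner function has constant modulus: since $|I(z)|=1$ for almost every $z\in\mathbb{T}^d$, we have $|\varphi(z)|=|C|$ almost everywhere, so that $|\varphi|^{p-2}$ equals the constant $|C|^{p-2}$ almost everywhere. Consequently
\[
|\varphi|^{p-2}\varphi = |C|^{p-2}\,\varphi
\]
as functions in $L^2(\mathbb{T}^d)$, the membership in $L^2(\mathbb{T}^d)$ being clear because $\varphi$ is bounded.

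Since $\varphi=CI$ is itself in $H^2(\mathbb{T}^d)$, the Riesz projection fixes it, i.e. $P\varphi=\varphi$. Applying $P$ to the displayed identity and using linearity, I would obtain
\[
P\left(|\varphi|^{p-2}\varphi\right) = |C|^{p-2}\,P\varphi = |C|^{p-2}\,\varphi,
\]
which is exactly the relation in Theorem~\ref{thm:duality}~(a) with $\lambda=|C|^{p-2}>0$. This settles the claim for every $1\le p<\infty$.

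For the endpoint $p=\infty$, I would argue by a limiting argument in the defining inequality \eqref{eq:cpoint}. Let $f\in H^\infty(\mathbb{T}^d)$ satisfy $\langle f,\varphi\rangle=0$. Because $m_d$ is a finite measure, $H^\infty(\mathbb{T}^d)\subset H^p(\mathbb{T}^d)$ for every finite $p$, so $f$ lies in $H^p(\mathbb{T}^d)$ and the orthogonality condition is unchanged. The previous paragraph shows that $\varphi$ is a Hilbert point in each $H^p(\mathbb{T}^d)$, whence
\[
\|\varphi\|_{H^p(\mathbb{T}^d)} \le \|\varphi+f\|_{H^p(\mathbb{T}^d)}
\]
for all finite $p$. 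Letting $p\to\infty$ and using the standard convergence $\|g\|_{L^p(\mathbb{T}^d)}\to\|g\|_{L^\infty(\mathbb{T}^d)}$ on a probability space (applied to the bounded functions $\varphi$ and $\varphi+f$) yields $\|\varphi\|_{H^\infty(\mathbb{T}^d)}\le\|\varphi+f\|_{H^\infty(\mathbb{T}^d)}$, as required.

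There is essentially no serious obstacle here; the whole proof rests on the single fact that $|\varphi|^{p-2}$ is constant. The only points requiring a modicum of care are confirming that $|\varphi|^{p-2}\varphi\in L^2(\mathbb{T}^d)$ so that $P$ may be applied (immediate from boundedness), and justifying the interchange of limit and norm at $p=\infty$. As an alternative, one can bypass the limiting step entirely and treat all $1\le p\le\infty$ uniformly via Theorem~\ref{thm:duality}~(b): since $|\varphi|=|C|$ gives $\|\varphi\|_{H^p(\mathbb{T}^d)}=\|\varphi\|_{H^2(\mathbb{T}^d)}=|C|$, H\"older's inequality in \eqref{eq:dualnorm} with conjugate exponent $q=p/(p-1)$ yields $\|\varphi\|_{(H^p(\mathbb{T}^d))^\ast}\le\|\varphi\|_{H^q(\mathbb{T}^d)}=|C|$, and hence $\|\varphi\|_{H^p(\mathbb{T}^d)}\|\varphi\|_{(H^p(\mathbb{T}^d))^\ast}\le|C|^2=\|\varphi\|_{H^2(\mathbb{T}^d)}^2$; combined with the automatic reverse inequality this gives the characterizing equality of part (b).
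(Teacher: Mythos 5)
Your proof is correct and follows essentially the same route as the paper: the paper likewise derives the case $p<\infty$ directly from Theorem~\ref{thm:duality}~(a) via $P(|\varphi|^{p-2}\varphi)=|C|^{p-2}\varphi$, handles $p=\infty$ by taking the limit in \eqref{eq:cpoint}, and notes the same alternative argument through Theorem~\ref{thm:duality}~(b), \eqref{eq:dualnorm}, and H\"older's inequality.
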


We now turn to the proof of Theorem~\ref{thm:inner1}, which states that there are no other Hilbert points in $H^p(\mathbb{T})$ when $p\neq 2$. 
\begin{proof}
	[Proof of Theorem~\ref{thm:inner1}] The sufficiency part is the case $d=1$ of Corollary~\ref{cor:inneryes}, so it remains to settle the necessity part. 
	
	We begin with the case $1 \leq p < \infty$, $p\neq 2$. We assume that $\varphi$ is a Hilbert point in $H^p(\mathbb{T}^d)$ and use Theorem~\ref{thm:duality} (a) to infer that
	\[P\left(|\varphi|^{p-2}\varphi\right) = \lambda \varphi\]
	for some $\lambda>0$. We may assume without loss of generality that $\lambda=1$ by rescaling $\varphi$ if necessary. Suppose that $f$ is an arbitrary analytic polynomial. Then since the Riesz projection is self-adjoint, we find that
	\[\left\langle |\varphi|^2, f \right\rangle = \left\langle \varphi, \varphi f \right\rangle = \left\langle P (|\varphi|^{p-2} \varphi), \varphi f \right\rangle = \left\langle |\varphi|^{p-2} \varphi, \varphi f \right\rangle = \left\langle |\varphi|^p, f \right\rangle.\]
	The same identity holds also with $f$ replaced by $\overline{f}$. Hence $|\varphi|^p = |\varphi|^2$ almost everywhere, so $\varphi$ must be an inner function.
	
	We assume next that $\varphi$ is a Hilbert point in $H^\infty(\mathbb T)$. We factor $\varphi$ in the usual way as $\varphi = I E$, where $I$ is an inner function and $E$ is an outer function. For a real number $\eta$, consider the function $f_\eta = I E^{1+\eta}$ which is in $H^\infty(\mathbb{T})$ and satisfies $\|f_\eta\|_{H^\infty(\mathbb{T})} = \|\varphi\|_{H^\infty(\mathbb{T})}^{1+\eta}$. Since 
	\begin{equation}\label{eq:dunorm} 
		\| \varphi \|_{(H^{\infty}(\mathbb{T}))^*} \geq \mre \frac{\langle f_\eta, \varphi\rangle}{\|f_\eta\|_{H^\infty(\mathbb{T})}}, 
	\end{equation}
	Theorem~\ref{thm:duality} (b) shows that $\varphi$ is a Hilbert point in $H^\infty(\mathbb{T})$ only if the quantity to the right in \eqref{eq:dunorm} is maximized for $\eta=0$. Using that $|\varphi|=|E|$ almost everywhere, we find that
	\[0=\left.\frac{d}{d\eta} \mre \frac{\langle f_\eta, \varphi\rangle}{\|f_\eta\|_{H^\infty(\mathbb{T})}} \right|_{\eta=0} = \frac{1}{\|E\|_{H^\infty}} \int_{\mathbb{T}} |E(z)|^2 \log\left(\frac{|E(z)|}{\|E\|_{H^\infty(\mathbb{T})}}\right)\,dm_1(z),\]
	which holds if and only if $\varphi$ is a constant multiple of an inner function. 
\end{proof}

\section{\texorpdfstring{$1$}{1}-homogeneous polynomials} \label{sec:linear} 
To prove Theorem~\ref{thm:Plinear} we require some basic facts. We first recall that the Riesz projection $P$ can be expressed using the Szeg\H{o} kernel as 
\begin{equation}\label{eq:szego} 
	Pf(z) = \int_{\mathbb{T}^d} f(\zeta) \prod_{j=1}^d \frac{1}{1-z_j \overline{\zeta_j}}\,dm_d(\zeta). 
\end{equation}
Next, a function $f$ in $L^p(\mathbb{T}^d)$ is called $1$-homogeneous if
\[f(e^{i\theta}z_1, e^{i\theta}z_2,\ldots, e^{i\theta}z_d) = e^{i\theta} f(z_1,z_2,\ldots,z_d).\]
It is clear that if $\varphi$ is a $1$-homogeneous polynomial, then $|\varphi|^{p-2} \varphi$ is a $1$-homogeneous function. Hence $P(|\varphi|^{p-2} \varphi)$ is a $1$-homogeneous polynomial whenever $\varphi$ is a $1$-homogeneous polynomial. 
\begin{proof}
	[Proof of Theorem~\ref{thm:Plinear}] Our goal is to establish that if $\varphi(z)=\sum_{j=1}^d c_j z_j$, then 
	\begin{equation}\label{eq:Plinear} 
		\left(P|\varphi|^{p-2} \varphi\right)(z) = \frac{p}{2}\sum_{j=1}^d c_j z_j \int_0^1 \int_{\mathbb{T}^d} |\varphi_j(\zeta,r)|^{p-2}\,dm_d(\zeta)\,2rdr, 
	\end{equation}
	where $\varphi_j(z,r) := \varphi(z_1,\ldots,r z_j,\ldots,z_d)$ for $j=1,2,\ldots,d$. By the discussion above, we know that the left-hand side of \eqref{eq:Plinear} is a $1$-homogeneous polynomial. Hence we do not need to consider any other Fourier coefficients when computing the Riesz projection.
	
	Let us first demonstrate that we without loss of generality may assume that $c_j>0$ for $j=1,2,\ldots,d$. Suppose that we have established \eqref{eq:Plinear} for $c_j>0$. Given any $\varphi$, we define
	\[\widetilde{\varphi}(z) = \sum_{j=1}^d e^{i\theta_j} c_j z_j\]
	where $e^{i\theta_j}$ is chosen so that $\widetilde{c_j} := e^{i\theta_j} c_j>0$. Using \eqref{eq:szego}, a change of variables and the rotational invariance of $\mathbb{T}^d$, we find that
	\[\left(P|\varphi|^{p-2} \varphi\right)(z) = \left(P|\widetilde{\varphi}|^{p-2} \widetilde{\varphi} \right)(e^{-i\theta_1} z_1, e^{-i\theta_2}z_2,\ldots,e^{-i\theta_d}z_d).\]
	Using \eqref{eq:Plinear} for $\widetilde{\varphi}$ we obtain \eqref{eq:Plinear} for $\varphi$, since the integrals on the right-hand side of \eqref{eq:Plinear} are the same for $\varphi$ and $\widetilde{\varphi}$, again by rotational invariance.
	
	Considering the Fourier series of $|\varphi|^{p-2}$, we compute 
	\begin{equation}\label{eq:Fproj} 
		(P|\varphi|^{p-2} \varphi)(z) = A \varphi(z) + \sum_{j=1}^d z_j \sum_{\substack{k=1 \\
		k \neq j}}^d B_{j,k} c_k , 
	\end{equation}
	for $A = \int_{\mathbb{T}^d} |\varphi(\zeta)|^{p-2}\,dm_d(\zeta)$ and $B_{j,k} = \int_{\mathbb{T}^d} |\varphi(\zeta)|^{p-2} \, \zeta_j \overline{\zeta_k}\,dm_d(\zeta)$ with $j\neq k$. In what follows, let $\alpha$ in $\mathbb{N}_0^d$ denote a multi-index, and set $c = (c_1,c_2,\ldots,c_d)$. Suppose that $p-2=2n$ for a nonnegative integer $n$. By the multinomial theorem, 
	\begin{equation}\label{eq:multithm} 
		(\varphi(z))^n = \sum_{|\alpha|=n} \binom{n}{\alpha} c^\alpha z^\alpha \qquad\text{where}\qquad \binom{n}{\alpha}=\frac{n!}{\alpha_1! \alpha_2! \cdots \alpha_d!}. 
	\end{equation}
	We will use \eqref{eq:multithm} to obtain expressions for $A$ and $B_{j,k}$. It is clear that
	\[A = \sum_{|\alpha|=n} \binom{n}{\alpha}^2 c^{2\alpha}.\]
	Given some $\alpha$ with $\alpha_k\geq1$ let $\beta$ denote the multi-index obtained by subtracting $1$ from the $k$th coordinate of $\alpha$ and adding $1$ to the $j$th coordinate of $\alpha$. Note that
	\[\binom{n}{\beta} = \binom{n}{\alpha} \frac{\alpha_k}{\alpha_j+1}.\]
	By using \eqref{eq:multithm} twice, we find that
	\[B_{j,k} = \sum_{\substack{|\alpha|=n \\
	\alpha_k \geq1}} \binom{n}{\alpha} \binom{n}{\beta} c^{\alpha} c^{\beta} = \sum_{\substack{|\alpha|=n \\
	\alpha_k \geq1}} \binom{n}{\alpha}^2 \frac{\alpha_k}{\alpha_j+1} \frac{c_j}{c_k} c^{2\alpha} = \frac{c_j}{c_k} \sum_{|\alpha|=n} \binom{n}{\alpha}^2 \frac{\alpha_k}{\alpha_j+1} c^{2\alpha}.\]
	Consequently, 
	\begin{equation}\label{eq:rowsum} 
		Ac_j+ \sum_{\substack{k=1 \\
		k \neq j}}^d B_{j,k} c_k = c_j (n+1) \sum_{|\alpha|=n} \binom{n}{\alpha}^2 \frac{c^{2\alpha}}{\alpha_j+1}. 
	\end{equation}
	A direct computation shows that
	\[\sum_{|\alpha|=n} \binom{n}{\alpha}^2 \frac{c^{2\alpha}}{\alpha_j+1} = \int_0^1 \int_{\mathbb{T}^d} |\varphi_j(\zeta,r)|^{2n}\,dm_d(\zeta)\,2rdr\]
	for $\varphi_j(z,r) = \varphi(z_1,\ldots,r z_j,\ldots,z_d)$ for $j=1,2,\ldots,d$. By \eqref{eq:Fproj} and \eqref{eq:rowsum} we have now established the formula 
	\begin{equation}\label{eq:2nproj} 
		(P|\varphi|^{2n} \varphi)(z) = (n+1) \sum_{j=1}^d c_j z_j \int_0^1 \int_{\mathbb{T}^d} |\varphi_j(\zeta,r)|^{2n}\,dm_d(\zeta)\,2rdr 
	\end{equation}
	for $n=0,1,2,\ldots$. We can extend \eqref{eq:2nproj} to the case $n=p-2$ for $p\geq1$ by polynomial approximation. Let $D$ denote the differential operator
	\[Df(x) := \frac{d}{dx} \big(x f(x)\big).\]
	If $f(x) = x^n$ for $n=0,1,2,\ldots$, then \eqref{eq:2nproj} can be restated as 
	\begin{equation}\label{eq:fproj} 
		P\left(f\big(|\varphi|^2\big)\varphi\right)(z) = \sum_{j=1}^d c_j z_j \int_0^1 \int_{\mathbb{T}^d} Df\big(|\varphi_j(\zeta,r)|^2\big)\,dm_d(\zeta)\,2rdr. 
	\end{equation}
	By linearity of $P$ and $D$, it is clear that \eqref{eq:fproj} holds for any polynomial $f$. Suppose that $f$ is continuously differentiable on $[0,C]$ for $C=(c_1+c_2+\cdots+c_d)^2$. Then \eqref{eq:fproj} holds for $f$ since both $f$ and $f'$ may be simultaneously uniformly approximated by polynomials on $[0,C]$. In particular, \eqref{eq:fproj} holds for $f(x)=(\delta+x)^{p/2-1}$ for $\delta>0$. By Fubini's theorem, we may let $\delta\to0^+$ when $p\geq1$ and obtain \eqref{eq:Plinear} from \eqref{eq:fproj}. 
\end{proof}
\begin{remark}
	We may replace the above polynomial approximation argument by an appeal to analytic continuation in the variable $p$ and the fact that the sequence $2n$ is not a Blaschke sequence in the right half-plane. The latter kind of argument is used in the proof of Theorem~\ref{thm:orthocex} below. 
\end{remark}

Note that we may interpret the integrals on the right-hand side of \eqref{eq:Plinear} as area integrals over the unit disc with respect to the variable $w = r z_j$. Let $A$ denote the Lebesgue measure of $\mathbb{C}$, normalized so that $A(\mathbb{D})=1$. The following result is pertinent to our analysis of the right-hand side of \eqref{eq:Plinear}. 
\begin{lemma}\label{lem:subharm} 
	Fix $2<p<\infty$. If $a>b>0$, then
	\[\int_{\mathbb{T}}\int_{\mathbb{D}} |aw+bz+c|^{p-2} \,dA(w)\,dm_1(z) < \int_{\mathbb{T}}\int_{\mathbb{D}} |az+bw+c|^{p-2}\,dA(w)\,dm_1(z)\]
	for every complex number $c$. 
\end{lemma}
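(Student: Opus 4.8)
The plan is to pass to polar coordinates in the disc variable, reduce both sides to one-parameter integrals of a \emph{symmetric} double circular mean, and then exhibit a monotonicity of that mean which makes the comparison transparent.

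First I would write the normalized area integral in polar form, $\int_{\mathbb{D}} F\,dA = \int_0^1 \int_{\mathbb{T}} F(rz')\,dm_1(z')\,2r\,dr$, exactly as in the proof of Theorem~\ref{thm:Plinear}. Setting
\[\Phi(x,y) := \int_{\mathbb{T}}\int_{\mathbb{T}} |xz' + yz + c|^{p-2}\,dm_1(z')\,dm_1(z),\]
which is manifestly symmetric in $x$ and $y$, this turns the left-hand side into $2\int_0^1 \eta\,\Phi(a\eta,b)\,d\eta$ and the right-hand side into $2\int_0^1 \eta\,\Phi(b\eta,a)\,d\eta$. Thus it suffices to prove the pointwise inequality $\Phi(a\eta,b) \le \Phi(b\eta,a)$ for $0<\eta<1$, strictly enough that the weighted integral is strict.

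The key step, and the part I expect to carry the argument, is a change of variables that makes $\Phi$ depend on $(x,y)$ only through the elementary symmetric functions $s=x+y$ and $P=xy$. Substituting $w=z'\overline{z}$, a measure-preserving map of $\mathbb{T}^2$, gives $xz'+yz+c = z(xw+y)+c$, and integrating in $z$ first collapses the double mean to a single one:
\[\Phi(x,y) = \int_{\mathbb{T}} G(|xw+y|)\,dm_1(w) = \frac{1}{\pi}\int_0^{\pi} G\!\left(\sqrt{x^2+y^2+2xy\cos\theta}\right)d\theta,\]
where $G(\rho):=\int_{\mathbb{T}}|\rho z + c|^{p-2}\,dm_1(z)$ is the circular mean of the subharmonic function $\zeta\mapsto|\zeta+c|^{p-2}$. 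Writing $x^2+y^2+2xy\cos\theta = s^2 - 2P(1-\cos\theta)$ isolates the asserted dependence on $s$ and $P$ alone.

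Finally I would invoke the classical fact that circular means of a subharmonic function are nondecreasing in the radius, so $G$ is nondecreasing, and in fact strictly increasing on $(0,\infty)$ since $\Delta|\zeta+c|^{p-2} = (p-2)^2|\zeta+c|^{p-4}>0$ almost everywhere. For fixed product $P$ the integrand $G(\sqrt{s^2-2P(1-\cos\theta)})$ is then nondecreasing in $s$, so $\Phi$ is nondecreasing in $s=x+y$ when $P=xy$ is held fixed. The two pairs $(a\eta,b)$ and $(b\eta,a)$ share the product $ab\eta$, while their sums satisfy $a\eta+b-(a+b\eta)=(a-b)(\eta-1)<0$ for $\eta<1$; hence $\Phi(a\eta,b)\le\Phi(b\eta,a)$. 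Strictness comes from the fact that $G$ is strictly increasing and that near $\theta=0$ both radii stay positive, so the inequality is strict for every $\eta\in(0,1)$; integrating against the positive weight $2\eta$ then yields the strict inequality of the lemma. The main obstacle is spotting the substitution $w=z'\overline{z}$ that reduces the double average to $\int_{\mathbb{T}}G(|xw+y|)\,dm_1(w)$; once the dependence on $s$ and $P$ is laid bare, the monotonicity of circular means does the rest.
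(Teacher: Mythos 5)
Your argument is correct and is essentially the paper's proof in a different packaging: both reduce the double circular mean to $\int_{\mathbb{T}}G(|xw+y|)\,dm_1(w)$ with $G$ the circular mean of the subharmonic function $\zeta\mapsto|\zeta+c|^{p-2}$, compare the two radii pointwise, and conclude from the monotonicity of circular means. The only cosmetic difference is that you obtain the radius comparison from the identity $|a+b\eta w|^2-|a\eta w+b|^2=(a^2-b^2)(1-\eta^2)>0$ (phrased via the symmetric functions $s$ and $P$), whereas the paper deduces $|aw+b|<|a+bw|$ on $\mathbb{D}$ from the M\"obius self-map $w\mapsto(aw+b)/(a+bw)$.
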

\begin{proof}
	We begin by interchanging the order of integration and using rotational invariance to obtain 
	\begin{equation}\label{eq:abstrick} 
		\int_{\mathbb{T}}\int_{\mathbb{D}} |aw+bz+c|^{p-2} \,dA(w)\,dm_1(z) = \int_{\mathbb{D}}\int_{\mathbb{T}} \big||aw+b|z+c\big|^{p-2} \,dm_1(z)\,dA(w). 
	\end{equation}
	Notice that since $a>b$, the M\"{o}bius transformation
	\[w \mapsto \frac{aw+b}{a+bw}=\frac{w+b/a}{1+(b/a)w}\]
	maps the unit disc into itself. Hence 
	\begin{equation}\label{eq:Mest} 
		|aw+b|<|a+bw| 
	\end{equation}
	for every $w$ in $\mathbb{D}$. The function $z \mapsto |z + c|^{p-2}$ is subharmonic for each fixed complex number $c$. We can therefore use \eqref{eq:Mest} to conclude that
	\[\int_{\mathbb{T}} \big||aw+b|z+c\big|^{p-2}\,dm_1(z)<\int_{\mathbb{T}} \big||a+bw|z+c\big|^{p-2}\,dm_1(z).\]
	Integrating over $\mathbb{D}$ with respect to $w$ and using \eqref{eq:abstrick} twice, we obtain the stated inequality. 
\end{proof}

We are now ready to proceed with the proof of Theorem~\ref{thm:linear2}. 
\begin{proof}
	[Proof of Theorem~\ref{thm:linear2} for $2<p<\infty$] We may assume without loss of generality that $c_j>0$ for $j=1,2,\ldots,d$. If $d=1$ there is nothing to prove, so we also assume that $d\geq2$. We appeal to Theorem~\ref{thm:duality} (a) and Theorem~\ref{thm:Plinear} to conclude that $\varphi$ is a Hilbert point in $H^p(\mathbb{T}^d)$ if and only if
	\[\frac{p}{2}\sum_{j=1}^d c_j z_j \int_0^1 \int_{\mathbb{T}^d} |\varphi_j(z,r)|^{p-2}\,dm_d(z)\,2rdr = \lambda \sum_{j=1}^d c_j z_j\]
	for some constant $\lambda>0$. Of course, this is equivalent to the claim that 
	\begin{equation}\label{eq:intseq} 
		I_1 = I_2 = \cdots = I_d, 
	\end{equation}
	where we for $j=1,2,\ldots,d$ define
	\[I_j := \int_0^1 \int_{\mathbb{T}^d} |\varphi_j(z,r)|^{p-2}\,dm_d(z)\,2rdr.\]
	We will use a contrapositive argument, so assume that $a=c_1$ and $b=c_2$ for some $a>b>0$. By rotations and changing the order of integration, we find that 
	\begin{align*}
		I_1 &= \int_{\mathbb{T}^{d-2}} \int_{\mathbb{T}}\int_{\mathbb{D}}\Bigg|aw+bz+\sum_{j=2}^d c_j z_j\Bigg|^{p-2}\,dA(w)\,dm_1(z)\,dm_{d-2}(z) \\
		&<\int_{\mathbb{T}^{d-2}} \int_{\mathbb{T}} \int_{\mathbb{D}}\Bigg|az+bw+\sum_{j=2}^d c_j z_j\Bigg|^{p-2}\,dA(w)\,dm_1(z)\,dm_{d-2}(z) = I_2, 
	\end{align*}
	where we used Lemma~\ref{lem:subharm} for each $c = \sum_{j=2}^d c_j z_j$. It is now clear from \eqref{eq:intseq} that $\varphi$ cannot be a Hilbert point in $H^p(\mathbb{T}^d)$. 
\end{proof}

For the proof of Theorem~\ref{thm:linear2} in the case $p=\infty$, we require a well-known result, which will also be used in Section~\ref{sec:Khintchin} for $p < \infty$. Let $H^p_1(\mathbb{T}^d)$ be the subspace of $H^p(\mathbb{T}^d)$ comprised of $1$-homogeneous polynomials. 
\begin{lemma}\label{lem:P1} 
	The orthogonal projection $P_1 \colon H^2(\mathbb{T}^d) \to H^2_1(\mathbb{T}^d)$ extends to a contraction on $H^p(\mathbb{T}^d)$ for every $1 \leq p \leq \infty$. 
\end{lemma}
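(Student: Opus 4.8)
The plan is to realize $P_1$ as an average of isometries, so that contractivity drops out of the triangle inequality for vector-valued integrals. The starting observation is that $P_1$ simply extracts the $1$-homogeneous part of a function, and this operation can be written in terms of the diagonal rotation action of $\mathbb{T}$ on $\mathbb{T}^d$. Writing $R_\theta f(z) := f(e^{i\theta}z_1, e^{i\theta}z_2, \ldots, e^{i\theta}z_d)$, I would verify by expanding $f \sim \sum_\alpha \widehat{f}(\alpha)\,z^\alpha$ in its Fourier series that
\[
P_1 f(z) = \int_0^{2\pi} e^{-i\theta}\, R_\theta f(z)\,\frac{d\theta}{2\pi}.
\]
Indeed, $R_\theta$ multiplies the coefficient of $z^\alpha$ by $e^{i|\alpha|\theta}$, so integrating against $e^{-i\theta}$ annihilates every frequency except $|\alpha|=1$ and reproduces precisely $\sum_{j=1}^d \widehat{f}(e_j)\,z_j$, which is $P_1 f$.

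The second step is to observe that the integrand on the right-hand side has $H^p$ norm equal to $\|f\|_{H^p(\mathbb{T}^d)}$ for every $\theta$. This is because the Haar measure $m_d$ is invariant under the rotation $z \mapsto e^{i\theta}z$, so $f \mapsto R_\theta f$ is an isometry of $H^p(\mathbb{T}^d)$, and multiplication by the unimodular constant $e^{-i\theta}$ likewise preserves the $H^p$ norm. For $1 \leq p < \infty$, Minkowski's integral inequality applied to the displayed formula then yields
\[
\|P_1 f\|_{H^p(\mathbb{T}^d)} \leq \int_0^{2\pi} \bigl\| e^{-i\theta} R_\theta f \bigr\|_{H^p(\mathbb{T}^d)}\,\frac{d\theta}{2\pi} = \int_0^{2\pi} \|f\|_{H^p(\mathbb{T}^d)}\,\frac{d\theta}{2\pi} = \|f\|_{H^p(\mathbb{T}^d)},
\]
which is exactly the asserted contractivity. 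The endpoint $p = \infty$ follows from the same representation by the trivial pointwise estimate $|P_1 f(z)| \leq \int_0^{2\pi} |R_\theta f(z)|\,d\theta/(2\pi) \leq \|f\|_{H^\infty(\mathbb{T}^d)}$ for almost every $z$.

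Since the result is essentially standard, there is no genuine obstacle; the only point needing a word of care is the legitimacy of Minkowski's integral inequality, that is, the measurability and integrability allowing the norm to be pulled inside the $\theta$-integral. This is routine, since $\theta \mapsto R_\theta f$ is continuous into $H^p(\mathbb{T}^d)$ (immediate for a trigonometric polynomial $f$, hence in general by density), so the Bochner integral representing $P_1 f$ is well defined and the inequality applies verbatim.
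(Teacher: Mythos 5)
Your representation $P_1 f(z) = \int_0^{2\pi} e^{-i\theta} R_\theta f(z)\,\tfrac{d\theta}{2\pi}$ is exactly the paper's formula $P_1 f(z) = \int_{\mathbb{T}} f(z_1\zeta,\ldots,z_d\zeta)\,\overline{\zeta}\,dm_1(\zeta)$ in angular coordinates, and the paper likewise concludes by Minkowski's integral inequality. Your proof is correct and takes essentially the same approach, only spelling out the rotation-invariance and measurability details that the paper leaves implicit.
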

\begin{proof}
	The claim follows from the formula
	\[P_1 f (z) = \int_{\mathbb{T}} f(z_1 \zeta,z_2\zeta,\ldots,z_d \zeta)\,\overline{\zeta}\,dm_1(\zeta)\]
	and Minkowski's integral inequality. 
\end{proof}
\begin{proof}
	[Proof of Theorem~\ref{thm:linear2} for $p=\infty$] As in the case $2<p<\infty$, we assume without loss of generality that $c_j>0$ for $j=1,2,\ldots,d$. By Lemma~\ref{lem:P1}, we know that the projection from $H^\infty(\mathbb{T}^d)$ to $H_1^\infty(\mathbb{T}^d)$ is contractive, which implies that
	\[\|\varphi\|_{(H^\infty(\mathbb{T}^d))^\ast} = \|\varphi\|_{(H_1^\infty(\mathbb{T}^d))^\ast}.\]
	Noting that
	\[\|\varphi\|_{H^\infty(\mathbb{T}^d)} = \sum_{j=1}^d |c_j| = \sum_{j=1}^d c_j\]
	by our assumption that $c_j>0$ it is clear that $\|\varphi\|_{(H^\infty_1(\mathbb{T}^d))^\ast} = \max_{1 \leq j \leq d} c_j$. Hence we get from Theorem~\ref{thm:duality} (b) that $\varphi$ is a Hilbert point in $H^\infty(\mathbb{T}^d)$ if and only if
	\[\left(\max_{1 \leq j \leq d} c_j\right) \sum_{j=1}^d c_j = \sum_{j=1}^d c_j^2.\]
	By the assumption that $c_j>0$, we see that $\varphi$ is a Hilbert point in $H^\infty(\mathbb{T}^d)$ if and only if $c_1=c_2=\cdots=c_d$. 
\end{proof}
The conclusion of Theorem~\ref{thm:linear2} also holds if $d=2$ and $1 \leq p < 2$. To see this, it is sufficient to establish the following result. It replaces Lemma~\ref{lem:subharm} in the proof of Theorem~\ref{thm:linear2}, but the inequality goes in the reverse direction. 
\begin{lemma}\label{lem:d2} 
	Fix $1 \leq p < 2$. If $\varphi(z)=a z_1 + b z_2$ for $a>b>0$,then
	\[\int_0^1 \int_{\mathbb{T}^2} |\varphi_1(z,r)|^{p-2}\,dm_d(z)\,2rdr>\int_0^1 \int_{\mathbb{T}^2} |\varphi_2(z,r)|^{p-2}\,dm_d(z)\,2rdr.\]
\end{lemma}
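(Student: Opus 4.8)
The plan is to follow the proof of Lemma~\ref{lem:subharm} as far as possible, but to exploit the fact that in the case $d=2$ there is no residual ``constant'' $c$ to carry around. This makes the inner circular average completely explicit and, crucially, reverses the direction of the resulting monotonicity when $p<2$.

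First I would recast both sides as area integrals over $\mathbb{D}$. Writing $w=rz_j$, the weighted radial measure $2r\,dr\,dm_1$ pushes forward to the normalized area measure $dA$ on $\mathbb{D}$. Since $\varphi_1(z,r)=arz_1+bz_2$ and $\varphi_2(z,r)=az_1+brz_2$, the two sides become
\[
I_1=\int_{\mathbb{T}}\int_{\mathbb{D}}|aw+bz|^{p-2}\,dA(w)\,dm_1(z),\qquad I_2=\int_{\mathbb{T}}\int_{\mathbb{D}}|az+bw|^{p-2}\,dA(w)\,dm_1(z),
\]
and the goal is to prove $I_1>I_2$. This is exactly the assertion of Lemma~\ref{lem:subharm} specialized to $c=0$, except that the inequality is reversed because now $p-2<0$.

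Next I would apply the rotation trick behind \eqref{eq:abstrick}. Rotating $w\mapsto zw$ in the inner integral and using $|z|=1$, the absence of $c$ lets the inner integral over $\mathbb{T}$ collapse to a pure power, leaving
\[
I_1=\int_{\mathbb{D}}|aw+b|^{p-2}\,dA(w),\qquad I_2=\int_{\mathbb{D}}|a+bw|^{p-2}\,dA(w),
\]
with no remaining integration over $\mathbb{T}$. This is the one place where $d=2$ is genuinely used: for $d\geq3$ a nonzero $c=\sum_{j\geq3}c_jz_j$ would survive, and then the inner average $\rho\mapsto\int_{\mathbb{T}}|\rho z+c|^{p-2}\,dm_1(z)$ is no longer a monotone power of $\rho$, so this approach does not extend. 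I would finish by invoking the Möbius estimate \eqref{eq:Mest}, namely $|aw+b|<|a+bw|$ for every $w\in\mathbb{D}$, valid because $a>b>0$. Since $p<2$, the map $t\mapsto t^{p-2}$ is strictly decreasing on $(0,\infty)$, so the estimate gives the pointwise bound $|aw+b|^{p-2}>|a+bw|^{p-2}$ throughout $\mathbb{D}$; integrating over $w$ yields $I_1>I_2$.

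I expect the only delicate point to be integrability. The left integrand has a singularity at $w=-b/a\in\mathbb{D}$, where $|aw+b|^{p-2}=a^{p-2}|w+b/a|^{p-2}$ is locally integrable precisely because $p-2>-2$ (that is, $p>0$, which holds since $p\geq1$), while the right integrand is bounded on $\overline{\mathbb{D}}$ because its singularity $w=-a/b$ lies outside the disc. Hence both integrals are finite and the strict pointwise inequality is preserved under integration. Conceptually the argument is lighter than that of Lemma~\ref{lem:subharm}: the subharmonicity input degenerates into the elementary monotonicity of $t\mapsto t^{p-2}$, and it is exactly the sign change $p-2<0$ that flips the inequality into the direction required here.
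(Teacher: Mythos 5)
Your argument is correct and is essentially the paper's own proof, which likewise reduces both sides to the area integrals $\int_{\mathbb{D}}|aw+b|^{p-2}\,dA(w)$ and $\int_{\mathbb{D}}|a+bw|^{p-2}\,dA(w)$, invokes the M\"obius estimate $|aw+b|<|a+bw|$ from the proof of Lemma~\ref{lem:subharm}, and concludes from $p-2<0$. Your additional remarks on integrability near $w=-b/a$ and on why the argument is confined to $d=2$ are accurate elaborations of what the paper leaves implicit.
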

\begin{proof}
	As in the proof of Lemma~\ref{lem:subharm}, we get that $|aw+b|<|a+bw|$ for every $w \in \mathbb{D}$. The statement now follows from the fact that $p-2<0$. 
\end{proof}
Based on Theorem~\ref{thm:linear1}, Theorem~\ref{thm:linear2} and Lemma~\ref{lem:d2}, we offer now the following. 
\begin{conjecture}\label{conj:12} 
	Suppose that $1 \leq p < 2$. If $\varphi(z)=\sum_{j=1}^d c_j z_j$ is a Hilbert point in $H^p(\mathbb{T}^d)$, then the nonzero coefficients of $\varphi$ all have the same modulus. 
\end{conjecture}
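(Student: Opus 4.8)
The plan is to argue by contraposition, in parallel with the proof of Theorem~\ref{thm:linear2}. Discarding any vanishing coefficients (which only lowers $d$) and rotating as in the proof of Theorem~\ref{thm:Plinear}, I may assume that all coefficients are positive and, after relabelling, that $c_1=a>b=c_2$. By Theorem~\ref{thm:duality}~(a) and Theorem~\ref{thm:Plinear}, $\varphi$ is a Hilbert point if and only if $I_1=I_2=\cdots=I_d$, so it suffices to prove the strict inequality $I_1>I_2$. Writing $c=\sum_{j=3}^d c_j z_j$ and integrating the remaining coordinates over $\mathbb{T}^{d-2}$ as in the proof of Theorem~\ref{thm:linear2}, I set
\[ J(c) := \int_{\mathbb{T}}\int_{\mathbb{D}} |aw+bz+c|^{p-2}\,dA(w)\,dm_1(z) - \int_{\mathbb{T}}\int_{\mathbb{D}} |az+bw+c|^{p-2}\,dA(w)\,dm_1(z), \]
so that $I_1-I_2=\int_{\mathbb{T}^{d-2}} J(c)\,dm_{d-2}(z)$, and the goal becomes $\int_{\mathbb{T}^{d-2}} J(c)\,dm_{d-2}(z)>0$. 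In other words, one must average over $c$ the reverse of the inequality in Lemma~\ref{lem:subharm}.

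To analyze $J(c)$, I apply the change of variables $w\mapsto wz$ and the rotation in $z$ underlying \eqref{eq:abstrick}. This rewrites the two terms as $\int_{\mathbb{D}}\Phi_c(|aw+b|)\,dA(w)$ and $\int_{\mathbb{D}}\Phi_c(|a+bw|)\,dA(w)$, where $\Phi_c(s):=\int_{\mathbb{T}}|sz+c|^{p-2}\,dm_1(z)$ denotes the average of $\zeta\mapsto|\zeta+c|^{p-2}$ over the circle of radius $s$. Expanding binomially with $\gamma:=(p-2)/2$ yields $\Phi_c(s)=\sum_{k\ge0}\binom{\gamma}{k}^2|c|^{2k}s^{p-2-2k}$ for $s>|c|$, and the symmetric series (with the roles of $s$ and $|c|$ exchanged) for $s<|c|$. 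Since every exponent $p-2-2k$ is negative, $\Phi_c$ is strictly decreasing on $(|c|,\infty)$, while the second series shows it is strictly increasing on $(0,|c|)$. Combined with the M\"{o}bius estimate \eqref{eq:Mest}, namely $|aw+b|<|a+bw|$ for $w\in\mathbb{D}$, the difference of integrands $\Phi_c(|aw+b|)-\Phi_c(|a+bw|)$ is positive on the part of $\mathbb{D}$ where both radii exceed $|c|$; this is precisely the mechanism of Lemma~\ref{lem:d2}, where $d=2$ forces $c=0$ and $\Phi_0(s)=s^{p-2}$ is globally decreasing.

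The main obstacle is that, unlike the case $2<p<\infty$ of Lemma~\ref{lem:subharm} where subharmonicity makes $\Phi_c$ increasing, here $\Phi_c$ is non-monotone: it rises on $(0,|c|)$, has a cusp at $s=|c|$, and falls thereafter. The pointwise comparison therefore fails on the set $\{w\in\mathbb{D}:|aw+b|<|c|<|a+bw|\}$, which has positive measure, and a short expansion shows that in fact $J(c)<0$ once $|c|$ is large relative to $a+b$. Thus the slice inequality $J(c)>0$ is genuinely false in general, and any proof must exploit the averaging over $c$. For $d=3$ one has $|c|=c_3$ constant, so the question collapses to the single slice $J(c_3)>0$; for $d\ge4$ the modulus $|c|$ is itself the modulus of a sum of Steinhaus variables.

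I would attack $\int_{\mathbb{T}^{d-2}}J(c)\,dm_{d-2}(z)>0$ by substituting the hypergeometric series for $\Phi_c$, evaluating the elementary integrals $\int_{\mathbb{D}}|aw+b|^{p-2-2k}\,dA(w)$ and $\int_{\mathbb{D}}|a+bw|^{p-2-2k}\,dA(w)$ (again by the reduction in \eqref{eq:abstrick}) together with the moments $\int_{\mathbb{T}^{d-2}}|c|^{2k}\,dm_{d-2}(z)$, thereby reducing the whole problem to the positivity of an explicit multiple series in $a,b,c_3,\dots,c_d$. I expect this positivity to be provable by recognizing the series as a Khintchin-type moment identity, in the spirit of the computation that powers Theorem~\ref{thm:Plinear}. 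The step I expect to be hardest, and where a new idea seems to be required, is controlling the balance between the increasing and decreasing regimes of $\Phi_c$ when $|c|$ is comparable to the largest coefficient, since there the positive and negative contributions to $J(c)$ are in near cancellation; it is exactly this regime that the $d=2$ argument of Lemma~\ref{lem:d2} never encounters.
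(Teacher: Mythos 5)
You have not proved the statement, and neither does the paper: this is Conjecture~\ref{conj:12}, which the paper explicitly leaves open for $d\geq 3$ (only $d=2$ is settled, via Lemma~\ref{lem:d2}, and the analogous result for $2<p\leq\infty$ is Theorem~\ref{thm:linear2}). Your reduction is correct as far as it goes: by Theorem~\ref{thm:duality}~(a) and Theorem~\ref{thm:Plinear} the problem is equivalent to showing $I_1\neq I_2$ whenever $c_1=a>b=c_2>0$, and your expansion of $\Phi_c$ and its two monotonicity regimes is accurate. Your diagnosis of the obstruction is also correct and genuinely informative: the slicewise-in-$c$ comparison that drives Lemma~\ref{lem:subharm} fails here because $\Phi_c$ increases on $(0,|c|)$ and decreases on $(|c|,\infty)$, and $J(c)$ really does change sign as $|c|$ grows, so any proof must exploit the averaging over $c$. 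This matches the paper's own observations that the case $1\leq p<2$ cannot be handled by pairwise interaction of coefficients (see the non-monotone ratio $a^{(n)}/c^{(n)}$ in Table~\ref{tab:iterations}).

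The gap is that the essential step --- positivity of $\int_{\mathbb{T}^{d-2}}J(c)\,dm_{d-2}(z)$, or of the explicit multiple series you propose to reduce it to --- is not carried out, and you say yourself that a new idea is required there. There is no reason to expect that rewriting the series in terms of known moments will resolve it: the regime $|c|\asymp a$ where the positive and negative contributions nearly cancel is precisely where the existing proofs of the sharp Khintchin inequality for $1\leq p<2$ need their hardest estimates, and the paper notes that a proof of the conjecture would substantially simplify those arguments, so the positivity should be expected to be at least as hard as the technical core of that literature. The paper's own route to evidence is different from yours --- it studies the dynamics of the nonlinear projection $\mathscr{P}_p$ and reduces the conjecture to Question~\ref{que:bigcoeff} about the largest coefficient of the iterates --- but it stops at essentially the same wall. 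In short: the reduction and the diagnosis are sound and consistent with the paper, but the statement remains unproved.
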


The conjecture is open for $d\geq3$. In the next section we will obtain some evidence in support of Conjecture~\ref{conj:12}.

\section{Dynamics of the nonlinear projection operators} \label{sec:dynamics}

It may be easier to understand the action of the nonlinear projection operator $\varphi \mapsto P(|\varphi|^{p-2}\varphi)$ if we normalize it in the following way: 
\begin{equation}\label{eq:Pop} 
	\mathscr{P}_p(\varphi) := \frac{P(|\varphi|^{p-2}\varphi)}{\|P(|\varphi|^{p-2}\varphi)\|_{H^2(\mathbb{T} ^d)}}. 
\end{equation}
Then $\mathscr{P}_p$ maps the unit sphere of $H^2_1(\mathbb{T}^d)$ into itself by Theorem~\ref{thm:Plinear} and $\varphi$ is a fixed point of $\mathscr{P}_p$ if and only if it is a Hilbert point in $H^p(\mathbb{T}^d)$ by Theorem~\ref{thm:duality}~(a).

Consider a $1$-homogeneous polynomial
\[\varphi_0(z) = \sum_{j=1}^d c_j z_j,\]
normalized such that $\|\varphi_0\|_{H^2(\mathbb{T}^d)}=1$. We define inductively $\varphi_{n+1}:=\mathscr{P}_p(\varphi_n)$ for every nonnegative integer $n$. By Theorem~\ref{thm:duality} (a), we know that $\varphi_{n+1}=\varphi_n$ if and only if $\varphi_n$ is a Hilbert point in $H^p(\mathbb{T}^d)$. We let $c_j^{(n)}$ denote the coefficient of $\varphi_n$ at $z_j$ (so that $c_j=c_j^{(0)}$). What can we say about the behaviour of these coefficients when $n\to \infty$? We begin with two obvious conclusions, which follow at once from Theorem~\ref{thm:Plinear}. 
\begin{enumerate}
	\item[(i)] If $c_j=0$, then $c_j^{(n)}=0$ for every $n\geq0$. 
	\item[(ii)] If $c_j\neq0$, then $\arg(c_j^{(n)})=\arg(c_j)$ for every $n\geq0$. 
\end{enumerate}
For simplicity, we shall in what follows assume that $c_j>0$ for every $j=1,2,\ldots,d$. Next, let us compare two coefficients. 
\begin{enumerate}
	\item[(iii)] If $c_j=c_k$, then $c_j^{(n)} = c_k^{(n)}$ for every $n\geq0$. This follows at once from Theorem~\ref{thm:Plinear} and symmetry. 
\end{enumerate}
To see what happens when $c_j\neq c_k$, we will now establish a result that complements Lemma~\ref{lem:subharm} by giving an inequality in the opposite direction. While the proof of Lemma~\ref{lem:subharm} relied crucially on an argument involving subharmonicity, the next result follows from a purely geometric consideration.

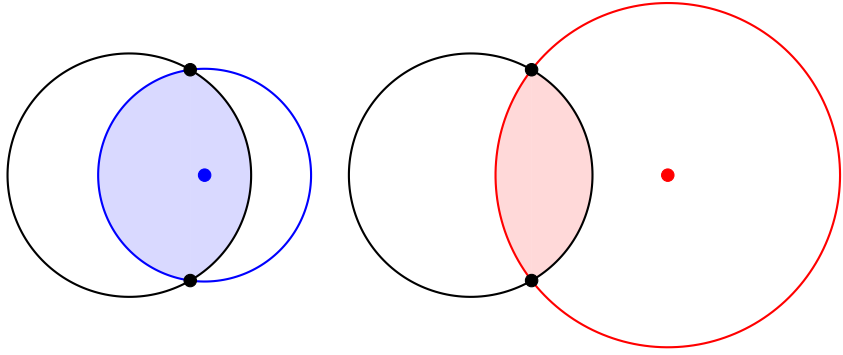
\begin{figure}
	\centering
	\begin{subfigure}{0.44\textwidth}
		\centering
		\begin{tikzpicture}
			\begin{axis}[
				axis equal image,
				axis lines = none,
				xmin = -1-0.1,
				xmax = (sqrt(5)+1)/2+sqrt(2)+0.1,
				ymin = -sqrt(2)-0.1,
				ymax = sqrt(2)+0.1,
				every axis x label/.style={
					at={(ticklabel* cs:1.025)},
					anchor=west,
				},
				every axis y label/.style={
					at={(ticklabel* cs:1.025)},
					anchor=south,
				},
				ticks=none,%
				axis line style={->}]
				
				% SHADING
				\addplot[domain=(sqrt(5)-1)/2-(sqrt(5)-1)/sqrt(2):1/2, thin, 
				samples=200, draw opacity=0, name path=c1u] 
				{sqrt(((sqrt(5)-1)/sqrt(2))^2-(x-(sqrt(5)-1)/2)^2)};
				\addplot[domain=(sqrt(5)-1)/2-(sqrt(5)-1)/sqrt(2):1/2, thin, 
				samples=200, draw opacity=0, name path=c1l] 
				{-sqrt(((sqrt(5)-1)/sqrt(2))^2-(x-(sqrt(5)-1)/2)^2)};
				\addplot[fill=blue, fill opacity=0.15] fill between[of=c1u and 
				c1l];
				
				\addplot[domain=1/2:1, thin, samples=200, draw opacity=0, name 
				path=c2u] {sqrt(1-x^2)};
				\addplot[domain=1/2:1, thin, samples=200, draw opacity=0, name 
				path=c2l] {-sqrt(1-x^2)};
				\addplot[fill=blue, fill opacity=0.15] fill between[of=c2u and 
				c2l];
				
				% CIRCLES
				\addplot[domain=360:0,name path=uc,samples=200,thick] 
				({cos(x)},{sin(x)});
				\addplot[domain=360:0,name path=c1,samples=200,color=blue,thick]
({(sqrt(5)-1)/2+(sqrt(5)-1)/sqrt(2)*cos(x)},{(sqrt(5)-1)/sqrt(2)*sin(x)});
				
				% POINTS
				\node[circle, draw, color=blue, fill=blue, scale=0.5, opacity=1] 
at (0.6180339887,0){};
				\node[circle, draw, color=black, fill=black, scale=0.5, 
opacity=1] at (0.5,0.8660254038){};
				\node[circle, draw, color=black, fill=black, scale=0.5, 
opacity=1] at (0.5,-0.8660254038){};
			\end{axis} 
		\end{tikzpicture}
	\end{subfigure} \hspace{-0.1\textwidth} % Not the most beautiful trick 	
\begin{subfigure}{0.54\textwidth}
		\begin{tikzpicture}
			\begin{axis}[
				axis equal image,
				axis lines = none,
				xmin = -1-0.1,
				xmax = (sqrt(5)+1)/2+sqrt(2)+0.1, 
				ymin = -sqrt(2)-0.1,
				ymax = sqrt(2)+0.1,
				every axis x label/.style={
					at={(ticklabel* cs:1.025)},
					anchor=west,
				},
				every axis y label/.style={
					at={(ticklabel* cs:1.025)},
					anchor=south,
				},
				ticks=none,%
				axis line style={->}]
			
				% SHADING
				\addplot[domain=(sqrt(5)+1)/2-sqrt(2):1/2, thin, samples=200, 
draw opacity=0, name path=c1u] {sqrt(2-(x-(sqrt(5)+1)/2)^2)};
				\addplot[domain=(sqrt(5)+1)/2-sqrt(2):1/2, thin, samples=200, 
draw opacity=0, name path=c1l] {-sqrt(2-(x-(sqrt(5)+1)/2)^2)};
				\addplot[fill=red, fill opacity=0.15] fill between[of=c1u and 
c1l];
			
				\addplot[domain=1/2:1, thin, samples=200, draw opacity=0, name 
path=c2u] {sqrt(1-x^2)};
				\addplot[domain=1/2:1, thin, samples=200, draw opacity=0, name 
path=c2l] {-sqrt(1-x^2)};
				\addplot[fill=red, fill opacity=0.15] fill between[of=c2u and 
c2l];
			
				% CIRCLES
				\addplot[domain=360:0,name path=uc,samples=200,thick] 
({cos(x)},{sin(x)});
				\addplot[domain=360:0,name path=c1,samples=200,color=red,thick] 
({(sqrt(5)+1)/2+sqrt(2)*cos(x)},{sqrt(2)*sin(x)});
				
				% POINTS
				\node[circle, draw, color=red, fill=red, scale=0.5, opacity=1] 
at (1.6180339887,0){};
				\node[circle, draw, color=black, fill=black, scale=0.5, 
opacity=1] at (0.5,0.8660254038){};
				\node[circle, draw, color=black, fill=black, scale=0.5, 
opacity=1] at (0.5,-0.8660254038){};
			\end{axis} 
		\end{tikzpicture}
	\end{subfigure}
	\caption{The areas {\color{blue}$A_1(r)$} and {\color{red}$A_2(r)$} in the 
proof of Lemma~\ref{lem:circles}, for {\color{red}$x=\frac{1+\sqrt{5}}{2}$} and 
{\color{blue} $\frac{1}{x}=\frac{\sqrt{5}-1}{2}$}. The black circle is the unit 
circle. We have chosen $r=\sqrt{2}$ or, equivalently, $\theta=\frac{\pi}{3}$.}  
	\label{fig:circles}
\end{figure}

\begin{lemma}\label{lem:circles} 
	Fix $1 \leq p<\infty$. If $a>b>0$, then
	\[a\int_{\mathbb{T}}\int_{\mathbb{D}} |aw+bz+c|^{p-2} \,dA(w)\,dm_1(z) > b\int_{\mathbb{T}}\int_{\mathbb{D}} |az+bw+c|^{p-2}\,dA(w)\,dm_1(z)\]
	for every complex number $c$. 
\end{lemma}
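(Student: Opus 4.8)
The plan is to follow the opening reduction in the proof of Lemma~\ref{lem:subharm} verbatim and then replace the subharmonicity argument by a direct comparison of two circular arcs. First I interchange the order of integration and use rotational invariance exactly as there to rewrite the two double integrals as
\[\int_{\mathbb{T}}\int_{\mathbb{D}} |aw+bz+c|^{p-2}\,dA(w)\,dm_1(z) = \int_{\mathbb{D}} F\big(|aw+b|\big)\,dA(w)\]
and $\int_{\mathbb{T}}\int_{\mathbb{D}} |az+bw+c|^{p-2}\,dA(w)\,dm_1(z) = \int_{\mathbb{D}} F\big(|a+bw|\big)\,dA(w)$, where $F(s):=\int_{\mathbb{T}}|sz+c|^{p-2}\,dm_1(z)$ is nonnegative and depends only on $s$ and $|c|$. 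Substituting $u=aw+b$ and $v=a+bw$, and using that the normalized area $A$ scales by $a^2$ and $b^2$, the asserted inequality becomes
\[\frac{1}{a}\int_{D(b,a)} F(|u|)\,dA(u) > \frac{1}{b}\int_{D(a,b)} F(|v|)\,dA(v),\]
where $D(\beta,\alpha)$ denotes the disc of radius $\alpha$ centered at $\beta$.

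Next I slice both integrals by the distance $r=|u|$ to the origin. In polar coordinates the circle of radius $r$ meets $D(b,a)$ (which contains the origin, as $a>b$) in an arc of half-angle $\theta_1(r)$ and meets $D(a,b)$ in an arc of half-angle $\theta_2(r)$; these are, up to a factor, the $r$-derivatives of the shaded areas in Figure~\ref{fig:circles}. Carrying out the angular integration reduces the claim to the pointwise estimate
\[b\,\theta_1(r) \geq a\,\theta_2(r) \qquad \text{for all } r,\]
together with strict inequality on a set of positive measure. The point to stress is that $F\geq 0$ is all we use: since $F$ need not be monotone in $r$ (e.g.\ for $c=0$ it equals $r^{p-2}$, decreasing when $p<2$), we genuinely need domination of the two arc-length densities and cannot appeal to any averaged comparison.

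Finally, the arc estimate is the geometric heart of the argument. For $r$ in the interval $[a-b,a+b]$ the two arcs are described by one and the same triangle, with side lengths $a$, $b$, and $r$: here $\theta_1(r)$ is the angle opposite the side of length $a$ and $\theta_2(r)$ is the angle opposite the side of length $b$. The law of sines therefore gives $b\sin\theta_1(r)=a\sin\theta_2(r)$, while $a>b$ forces $\theta_1(r)>\theta_2(r)$. Since $\theta\mapsto\theta/\sin\theta$ is strictly increasing on $(0,\pi)$, we obtain $\theta_1/\sin\theta_1\geq\theta_2/\sin\theta_2$, and multiplying through by $a\sin\theta_2(r)=b\sin\theta_1(r)$ yields $b\,\theta_1(r)\geq a\,\theta_2(r)$. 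When $r<a-b$ the circle lies entirely inside $D(b,a)$ while missing $D(a,b)$, so $\theta_1(r)=\pi$ and $\theta_2(r)=0$ and the inequality is strict; this interval has positive length and supplies the required strictness. I expect the main obstacle to be purely presentational---the bookkeeping of the degenerate ranges of $r$ and the verification of strictness---whereas the decisive idea is simply to recognize that the two arcs belong to a common triangle, so that the law of sines delivers $b\sin\theta_1=a\sin\theta_2$.
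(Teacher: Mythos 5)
Your argument is correct, and up to the decisive step it follows the paper's proof of this lemma: the same Fubini-plus-rotation reduction as in Lemma~\ref{lem:subharm}, the same slicing of the two area integrals by the level sets of the radial variable $r$, and the same observation that, since the angular factor $F\geq0$, everything comes down to a pointwise comparison of the two arc-length densities. Indeed, after your affine change of variables, your inequality $b\,\theta_1(r)\geq a\,\theta_2(r)$ is precisely the paper's estimate $A_2'(r)\leq x A_1'(r)$ with $x=a/b$. Where you genuinely depart from the paper is in the proof of that estimate. The paper computes the two densities explicitly as $\frac{2r}{x^2}\arctan\frac{\sin\theta}{\cos\theta+1/x}$ and $2r\arctan\frac{\sin\theta}{\cos\theta+x}$, with $e^{\pm i\theta}$ the common intersection points with the unit circle, and then shows that $F_\theta(x)=\arctan\frac{\sin\theta}{\cos\theta+1/x}-x\arctan\frac{\sin\theta}{\cos\theta+x}$ is positive for $x>1$ by checking $F_\theta(1)=0$ and $F_\theta'(x)>0$. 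You instead recognize $\theta_1(r)$ and $\theta_2(r)$ as the angles opposite the sides of lengths $a$ and $b$ in one and the same triangle with side lengths $a$, $b$, $r$, so that the law of sines gives $b\sin\theta_1=a\sin\theta_2$ and the conclusion follows from $\theta_1>\theta_2$ (larger angle opposite larger side) and the monotonicity of $t\mapsto t/\sin t$ on $(0,\pi)$. This is cleaner and more conceptual than the paper's calculus verification, at the modest price of the change of variables that recenters the level sets at the origin. Your treatment of strictness (the interval $0<r<a-b$, where $\theta_1=\pi$ and $\theta_2=0$, together with $F>0$ there) is sound; in fact the inequality is already strict throughout $(a-b,a+b)$ since $t/\sin t$ is strictly increasing.
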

\begin{proof}
	Replacing $c$ by $c/b$, we assume without loss of generality that $a=x>1$ and $b=1$. Set $\Phi(r,z,c)=|rz+c|^{p-2}$. As in the proof of Lemma~\ref{lem:subharm}, we write 
	\begin{align*}
		\int_{\mathbb{T}}\int_{\mathbb{D}} |xz+w+c|^{p-2} \,dA(w)\,dm_1(z) &= \int_{\mathbb{T}} \int_{\mathbb{D}} \Phi(|1+xw|,z,c)\,dA(w)\,dm_1(z), \\
		\int_{\mathbb{T}}\int_{\mathbb{D}} |z+xw+c|^{p-2} \,dA(w)\,dm_1(z) &= \int_{\mathbb{T}} \int_{\mathbb{D}} \Phi(|x+w|,z,c)\,dA(w)\,dm_1(z). 
	\end{align*}
	For fixed $z$ on $\mathbb{T}$ and complex number $c$, we consider
	\[I_1(x) := \int_{\mathbb{D}} \Phi(|1+xw|,z,c)\,dA(w) \qquad \text{and} \qquad I_2(x) := \int_{\mathbb{D}} \Phi(|x+w|,z,c)\,dA(w)\]
	For $0 \leq r \leq x+1$, we define
	\[A_1(r):=A(\{w\in \mathbb{D}\,:\, |1+xw|\leq r \}) \quad \text{and} \quad A_2(r):=A(\{w\in \mathbb{D}\,:\, |x+w|\leq r \}).\]
	By symmetry, we note that $A_1(r)$ is equal to the area of the intersection of the disc $\mathbb{D}(1/x,r/x)$ and the unit disc $\mathbb{D}$ and, similarly, that $A_2(r)$ is equal to the area of the intersection of the discs $\mathbb{D}(x,r)$ and $\mathbb{D}$. See Figure~\ref{fig:circles}. Since $A_2(r)=0$ for $r<x-1$, we rewrite the integrals as
	\[I_1(x)=\int_0^{x+1} \Phi(r,z,c) A_1'(r) \,dr \qquad \text{and} \qquad I_2(x) =\int_{x-1}^{x+1} \Phi(r,z,c) A_2'(r) \,dr, \]
	by polar coordinates and change of variables. Since $\Phi$ is nonnegative, we are done if we can prove that $A_2'(r) \leq x A_1'(r)$ for $x-1 \leq r \leq 1$. We restrict $r$ to this interval henceforth.
	
	The unit circle intersects the circles $|1+wx|=r$ and $|x+w|=r$ in the same two points $e^{ \pm i\theta}$ for some $0<\theta<\pi$. We find it convenient to consider $\theta$ a function of $r$. Let $\ell_1$ and $\ell_2$ denote the arc length of the part of the circles intersecting the unit disc, respectively. Then
	\[\ell_1(r) = 2 \frac{r}{x} \arctan{\left(\frac{\sin{\theta}}{\cos{\theta}+1/x}\right)} \qquad\text{and}\qquad \ell_2(r) = 2 r \arctan{\left(\frac{\sin{\theta}}{\cos{\theta}+x}\right)},\]
	where $\arctan$ takes values in $[0,\pi]$. Inspecting Figure~\ref{fig:circles} again, we find that
	\[A_1(r) = \pi\left(\frac{x-1}{x}\right)^2 + \int_{\frac{x-1}{x}}^{\frac{r}{x}} \ell_1(xs)\,ds \qquad\text{and}\qquad A_2(r) = \int_{x-1}^r \ell_2(s)\,ds,\]
	when $x-1<r<x+1$, from which we see that
	\[A'_1(r) = \frac{2r}{x^2}\arctan{\frac{\sin \theta}{\cos{\theta} + 1/x}} \qquad \text{and} \qquad A'_2(r) = 2r\arctan{\frac{\sin{\theta}}{\cos{\theta} + x}}.\]
	We shall now fix $x-1<r<x+1$, or equivalently $0<\theta<\pi$. To establish the desired estimate $A_2'(r) \leq x A_1'(r)$ it is enough to check that $F_\theta(x)>0$ for $x>1$, where
	\[F_\theta(x) = \arctan{\left(\frac{\sin \theta}{\cos \theta + 1/x}\right)}-x \arctan{\left(\frac{\sin \theta}{\cos \theta + x}\right)}.\]
	Since $F_\theta(1)=0$, we compute
	\[F_\theta'(x) = \frac{(x+1)\sin{\theta}}{1+2x\cos{\theta}+x^2} - \arctan{\left(\frac{\sin \theta}{\cos \theta + x}\right)} \geq \frac{(x+1)\sin{\theta}}{1+2x\cos{\theta}+x^2} -\frac{\sin \theta}{\cos \theta + x},\]
	using the estimate $\arctan(y)\leq y$ for $y\geq0$. Since
	\[\frac{(x+1)\sin{\theta}}{1+2x\cos{\theta}+x^2} -\frac{\sin \theta}{\cos{\theta} + x} = \frac{\sin{\theta}(1-\cos{\theta})(x-1)}{(1+2x\cos{\theta}+x^2)(\cos{\theta} + x)}\]
	and conclude that $F_\theta'(x)>0$, which completes the proof. 
\end{proof}
We may now make the following additional assertion. 
\begin{enumerate}
	\item[(iv)] If $c_j>c_k$, then $c_j^{(n)}>c_k^{(n)}$ for every $n\geq0$. This is a consequence of Theorem~\ref{thm:Plinear} and Lemma~\ref{lem:circles}. 
\end{enumerate}

Combining the assertions (i)---(iv) with Theorem~\ref{thm:Plinear} and Lemma~\ref{lem:subharm}, we may obtain the following result. 
\begin{theorem}\label{thm:iterative2} 
	Fix $2<p<\infty$. Suppose that $\varphi_0(z) = \sum_{j=1}^d c_j z_j$ is an arbitrary point in the unit sphere of $H^2_1(\mathbb{T}^d)$ and that $c_j\neq 0$ for $j=1,2,\ldots, d$. Then
	\[\lim_{n\to\infty}(\mathscr{P}_p^n \varphi_0)(z) = \frac{1}{\sqrt{d}} \sum_{j=1}^d \frac{c_j}{|c_j|} z_j. \]
\end{theorem}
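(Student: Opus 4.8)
The plan is to analyze the dynamical system on the positive orthant of the unit sphere of $H^2_1(\mathbb{T}^d)$. By assertions (i) and (ii) established above, we may restrict attention to the case where all coefficients are positive, since the arguments $\arg(c_j)$ are frozen throughout the iteration and zero coefficients remain zero. Thus the target limit $\frac{1}{\sqrt{d}}\sum_{j=1}^d z_j$ (in the reduced positive case) is the unique point with all equal positive coefficients, which is a Hilbert point by Theorem~\ref{thm:linear1} and hence a fixed point of $\mathscr{P}_p$. The strategy is to show this fixed point is a global attractor on the open positive orthant.

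First I would exploit the \emph{monotonicity structure} encoded in assertions (iii) and (iv): the iteration preserves both equalities and strict inequalities among coefficients, and in fact preserves their ordering. So without loss of generality one may sort the coefficients and track only whether the spread between the largest and smallest shrinks. The key quantitative tool is the combination of Lemma~\ref{lem:subharm} and Lemma~\ref{lem:circles}. Writing $I_j = \int_0^1\int_{\mathbb{T}^d}|\varphi_j(z,r)|^{p-2}\,dm_d(z)\,2r\,dr$, Theorem~\ref{thm:Plinear} gives $c_j^{(n+1)} \propto c_j^{(n)} I_j^{(n)}$. Lemma~\ref{lem:subharm} says that $I_j$ is strictly \emph{larger} for the index with the smaller coefficient, so the update multiplies smaller coefficients by a larger factor, pushing coefficients together; meanwhile Lemma~\ref{lem:circles} (which compares $c_j I_j$ versus $c_k I_k$) guarantees the ordering is never violated, so $c_j > c_k$ still implies $c_j^{(n+1)} > c_k^{(n+1)}$. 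The central claim is therefore that the ratio $c_{\max}^{(n)}/c_{\min}^{(n)}$ decreases monotonically toward $1$.

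The main step is to upgrade this qualitative contraction to genuine convergence. I would consider the function $M_n := \max_j c_j^{(n)} / \min_j c_j^{(n)} \geq 1$. By the two lemmas, $(M_n)$ is nonincreasing and bounded below by $1$, hence converges to some $M_\infty \geq 1$. By compactness of the positive sphere (after passing to the closed orthant), a subsequence of $(\varphi_n)$ converges to some limit $\varphi_\infty$ with all coordinates of comparable size; continuity of $\mathscr{P}_p$ forces $\varphi_\infty$ to be a fixed point, i.e.\ a Hilbert point in $H^p(\mathbb{T}^d)$. But by Theorem~\ref{thm:linear2}, the only Hilbert points with all coefficients nonzero have all coefficients of equal modulus, so $M_\infty = 1$ and $\varphi_\infty = \frac{1}{\sqrt{d}}\sum_{j=1}^d z_j$ (in the normalized positive case). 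Since every subsequential limit is this same point, the full sequence converges to it.

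The hard part, and the step requiring most care, will be passing from \emph{subsequential} convergence to convergence of the \emph{entire} sequence, and in justifying that the limit of a convergent subsequence is actually a fixed point. The cleanest route is to establish that $M_n$ converges (which needs strict monotonicity from Lemma~\ref{lem:subharm} together with a uniform lower bound on the contraction ratio away from the diagonal) and then argue that any subsequential limit is a fixed point via continuity of $\mathscr{P}_p$ on the positive sphere; the identification of that fixed point as the all-equal point is exactly where Theorem~\ref{thm:linear2} does the decisive work, ruling out any spurious limit with unequal coefficients. One subtlety is ensuring $\mathscr{P}_p$ is well defined and continuous at the relevant points, which follows from Theorem~\ref{thm:Plinear} since the integrals $I_j$ are strictly positive and depend continuously on the coefficients. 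A second subtlety is that coefficients must stay bounded away from $0$ along the iteration on the open orthant; this is precisely guaranteed by the order-preservation in assertion (iv), since $c_{\min}^{(n)} = c_{\min}^{(0)}/M_\infty^{?}$ stays positive as long as $M_n$ stays bounded, which it does by monotonicity.
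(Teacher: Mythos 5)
Your proposal takes essentially the same route as the paper: a monotone quantity extracted from Lemma~\ref{lem:subharm}, order-preservation from Lemma~\ref{lem:circles} (assertion (iv)), then compactness plus continuity of $\mathscr{P}_p$ to pin down the limit. The only difference is cosmetic: the paper tracks the largest coefficient $c_1^{(n)}$, which is strictly decreasing whenever it exceeds $d^{-1/2}$ because all ratios $c_1^{(n)}/c_j^{(n)}$ are nonincreasing while the $\ell^2$ normalization is fixed, whereas you track $M_n := c_{\max}^{(n)}/c_{\min}^{(n)}$; either quantity serves as the Lyapunov function.

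The one step you must not leave as written is ``continuity of $\mathscr{P}_p$ forces $\varphi_\infty$ to be a fixed point.'' Continuity alone never makes a subsequential limit of an orbit into a fixed point (an irrational rotation of the circle is continuous, has no fixed points, and every point is a subsequential limit of every orbit). What actually closes the argument is the strict monotonicity you already have in hand: since $M_n$ decreases to $M_\infty$ and $M_{n_k+1}=M(\mathscr{P}_p(\varphi_{n_k}))\to M(\mathscr{P}_p(\varphi_\infty))$ by continuity, you obtain $M(\mathscr{P}_p(\varphi_\infty))=M_\infty=M(\varphi_\infty)$; but Lemma~\ref{lem:subharm} shows $M$ strictly decreases under one application of $\mathscr{P}_p$ whenever $M>1$, so $M_\infty=1$ and $\varphi_\infty=d^{-1/2}\sum_j z_j$ in the positive normalization. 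Note that this identifies the limit directly, with no need to first establish that $\varphi_\infty$ is a fixed point and then invoke Theorem~\ref{thm:linear2}; that detour is harmless here (the paper proves Theorem~\ref{thm:linear2} independently beforehand), but it is superfluous, and the paper in fact records the reverse implication, deducing Theorem~\ref{thm:linear2} from this theorem. This contradiction via the Lyapunov quantity is exactly what the paper's proof does with $c_1^{(n)}$ in place of $M_n$; once it is spelled out, your argument is complete.
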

\begin{proof}
	We may assume without loss of generality that $c_1 \geq c_j>0$ for $j=2,3,\ldots,d$. By (iv), this ordering will persist under iterations by $\mathscr{P}_p$ so that we will have $c_1^{(n)}\geq c_j^{(n)}$ for all $n$ and $j=2,3,\ldots, d$. In particular, this implies that $c_1^{(n)}\geq d^{-1/2}$ by the normalization. The crux of the proof will be to show that $n\mapsto c_1^{(n)}$ is a strictly decreasing sequence whenever $c_1^{(0)}>d^{-1/2}$.
	
	We begin by showing how to conclude once we know that $n\mapsto c_1^{(n)}$ is strictly decreasing. If
	\[c_1^{(\infty)}:=\lim_{n\to \infty} c_1^{(n)}=\frac{1}{\sqrt{d}},\]
	then we are done because the ordering of the coefficients persists under iterations. We will next rule out the possibility that $c_1^{(\infty)}>d^{-1/2}$. If this were the case, we could by compactness find a subsequence $n_k$ and coefficients $c_j^{(\infty)}$ such that
	\[c_j^{(\infty)}=\lim_{k\to \infty} c_j^{(n_k)}\]
	for $j=1,2,\ldots, d$. Clearly, the ordering persists in the limit so that $c_1^{(\infty)}\geq c_j^{(\infty)}$ for every $j=2,3,\ldots, d$. If we now start iterating from
	\[\varphi_{\infty}(z):=\sum_{j=1}^d c_j^{(\infty)} z_j,\]
	then the largest coefficient of the iterates will again be a strictly decreasing sequence. However, this would violate the fact that the coefficients of $\mathscr{P}_p(\varphi)$ for $\varphi$ in the unit sphere of $H^2_1(\mathbb{T}^d)$ depend continuously on the coefficients of $\varphi$.
	
	It remains to show that $n\mapsto c_1^{(n)}$ is strictly decreasing when $c_1^{(0)}>d^{-1/2}$. Then there exists a $j_0$ such that $c_{j_0}^{(0)}<d^{-1/2}<c_1^{(0)}$. By (iv) and induction on $n$, we have then $c_{j_0}^{(n)}<c_1^{(n)}$ for all nonnegative integers $n$. Now invoking Lemma~\ref{lem:subharm} and taking into account Theorem~\ref{thm:Plinear}, we see that the ratios $c_1^{(n)}/c_{j}^{(n)}$ are nonincreasing for $j=2,3,\ldots,d$. This allows us to draw the desired conclusion because we have seen that at least one of these sequences of ratios is strictly decreasing. 
\end{proof}
\begin{remark}
	Theorem~\ref{thm:linear1} and Theorem~\ref{thm:linear2} can be obtained as direct corollaries to Theorem~\ref{thm:iterative2} through Theorem~\ref{thm:duality} (a). 
\end{remark}

Suppose that we start iterating from $\varphi_0(z) = az_1+bz_2$ for $|a|^2+|b|^2=1$ when $1 \leq p < 2$. Replacing Lemma~\ref{lem:subharm} by Lemma~\ref{lem:d2}, we obtain by similar considerations as above the following conclusions. 
\begin{itemize}
	\item If $|a|=|b|=1/\sqrt{2}$, then $\varphi_\infty(z) = \varphi_0(z)$. 
	\item If $|a|>|b|$, then $\varphi_\infty(z)= \frac{a}{|a|} z_1$. 
	\item If $|a|<|b|$, then $\varphi_\infty(z)= \frac{b}{|b|} z_2$. 
\end{itemize}
The key difference between the cases $1\leq p<2$ and $2<p<\infty$ is that if $a>b>0$, then the sequence $a^{(n)}$ is strictly increasing in the former and decreasing in the latter. 

Consider now $\varphi_0$ in the unit sphere of $H^2_1(\mathbb{T}^d)$ and apply the nonlinear projection operator \eqref{eq:Pop} for $d\geq3$ and $1 \leq p < \infty$. Repeating the reasoning of the first part of the proof of Theorem~\ref{thm:iterative2}, we see that to extend Theorem~\ref{thm:Plinear} to the range $1\leq p<2$, it would suffice to show that in this case, the largest coefficient of the iterates is strictly increasing. We have performed some numerical experiments when $d=3$ and $1 \leq p < 2$, picking many random polynomials from $H^2_1(\mathbb{T}^3)$ as initial point and applying the iteration. A representative example (with $p=1$) can be found in Table~\ref{tab:iterations}.

Table~\ref{tab:iterations} reveals another difference between $1 \leq p < 2$ and $2 < p < \infty$, since the ratio $n \mapsto a^{(n)}/c^{(n)}$ is not monotone. In this example, the ratio decreases in the first two iterations and then increases thereafter. This indicates that the case $1 \leq p < 2$ is more subtle, since it is not sufficient to consider the pairwise interaction of coefficients under the iterations. 
\begin{question}\label{que:bigcoeff} 
	Suppose that $\varphi_0(z) = \sum_{j=1}^d c_j z_j$ is in the unit sphere of $H^2_1(\mathbb{T}^d)$ and that $c_1 > c_j \geq 0$ for every $j=2,3,\ldots,d$. Is it true that
	\[\lim_{n\to\infty} (\mathscr{P}_p^n \varphi_0)(z) = z_1\]
	whenever $1 \leq p < 2$? 
\end{question}

It follows from the above discussion that a positive answer to Question~\ref{que:bigcoeff} would lead to a proof of Conjecture~\ref{conj:12}.
\begin{table}
	\begin{tabular}
		{r|c|c|c} $n$ & $a^{(n)}$ & $b^{(n)}$ & $c^{(n)}$ \\
		\hline $0$ & $0.7256$ & $0.6766$ & $0.1251$ \\
		$1$ & $0.7577$ & $0.6346$ & $0.1520$ \\
		$2$ & $0.8259$ & $0.5413$ & $0.1576$ \\
		$3$ & $0.9191$ & $0.3762$ & $0.1175$ \\
		$4$ & $0.9742$ & $0.2152$ & $0.0686$ \\
		$5$ & $0.9931$ & $0.1120$ & $0.0359$ \\
		$6$ & $0.9982$ & $0.0566$ & $0.0182$ \\
		$7$ & $0.9996$ & $0.0284$ & $0.0091$ \\
		$8$ & $0.9999$ & $0.0142$ & $0.0046$ 
	\end{tabular}
	\caption{Iterations of \eqref{eq:Pop} with $p=1$ starting from $\varphi_0(z) = a^{(0)}z_1 + b^{(0)}z_2+c^{(0)}z_3$, computed numerically to precision $10^{-5}$.} 
\label{tab:iterations} \end{table}

It is natural to ask how the dynamics of $\mathscr{P}_p$ may be in a more general situation. Notice however that $\mathscr{P}_p$ is not well defined on the unit sphere of $H^2(\mathbb{T}^d)$ when $p>2$, so that it is not clear how to proceed in full generality. One could imagine modifiying the definition of $\mathscr{P}_p$ or restricting again to some submanifold of the unit sphere of $H^2(\mathbb{T}^d)$ that is preserved by $\mathscr{P}_p$, such as that consisting of $m$-homogeneous polynomials. It would be interesting to know in which generality what was observed above may hold, namely that inner functions are attracting fixed points for $1\leq p < 2$ and repelling fixed points for $2<p<\infty$.

\section{Khintchin's inequality for Steinhaus variables} \label{sec:Khintchin} 
We will now see how Theorem~\ref{thm:linear2} (and Theorem~\ref{thm:Plinear}) can be applied to give a proof of the sharp Khintchin inequality \eqref{eq:Khintchin} in the range $2<p<\infty$. Recall that a Steinhaus random variable by definition is uniformly distributed on $\mathbb{T}$ with respect to the Lebesgue arc length measure. Hence, if $(z_j)_{j\geq1}^d$ is a sequence of independent Steinhaus variables and $(c_j)_{j\geq1}^d$ are complex numbers, then
\[\mathbb{E}\Bigg|\sum_{j=1}^d c_j z_j \Bigg|^p = \int_{\mathbb{T}^d} |c_1 z_1 + \cdots + z_d z_d|^p\,dm_d(z) = \|\varphi\|_{H^p(\mathbb{T}^d)}^p.\]

The novelty of our proof of Khintchin's inequality is that we avoid using bisubharmonic functions as was done in \cite{BC02}. It may be observed, however, that subharmonicity plays an essential role, namely in the proof of Lemma~\ref{lem:subharm}.

Our proof begins with Lemma~\ref{lem:Khintchin}, where we consider critical points of the functional 
\begin{equation}\label{eq:khinfunc} 
	\mathscr{K}_p(c) := \|c_1z_1+\cdots+c_dz_d\|_{H^p(\mathbb{T}^d)} 
\end{equation}
defined for $c=(c_1,\ldots , c_d)$ in the unit sphere of $\mathbb{C}^d$. Recall that $H_1^p(\mathbb{T}^d)$ is the $d$-dimensional subspace of $H^p(\mathbb{T}^d)$ comprised of $1$-homogeneous polynomials. 
\begin{proof}
	[Proof of Lemma~\ref{lem:Khintchin}] Fix $2<p<\infty$. For $c$ in the unit sphere of $\mathbb{C}^d$, let $\varphi$ denote the associated $1$-homogeneous polynomial. By the Lagrange multiplier theorem, any critical point $(c_1,\ldots,c_d)$ of the functional \eqref{eq:khinfunc} satisfies
	\[\nabla \|\varphi\|_{H^p(\mathbb{T}^d)} = \lambda \nabla \|\varphi\|_{H^2(\mathbb{T}^d)}\]
	for some constant $\lambda$. This means that the complex tangent space to the closed ball in $H_1^p(\mathbb{T}^d)$ centered at the origin with radius $\|\varphi\|_{H^p(\mathbb{T}^d)}$ at the point $\varphi$ is the same as the complex tangent space to the closed unit ball in $H_1^2(\mathbb{T}^d)$ at the point $\varphi$. But this condition means that for any $1$-homogeneous polynomial $f$ such that $\langle f, \varphi \rangle = 0$ we have that 
	\begin{equation}\label{eq:1hom} 
		\|\varphi +f\|_{H^p(\mathbb{T}^d)} \geq \|\varphi\|_{H^p(\mathbb{T}^d)}. 
	\end{equation}
	By Lemma~\ref{lem:P1}, we get that \eqref{eq:1hom} holds for all $1$-homogeneous polynomials $f$ satisfying $\langle f,\varphi \rangle=0$ if and only if $\varphi$ is a Hilbert point in $H^p(\mathbb{T}^d)$. 
\end{proof}
\begin{remark}
	The above proof is a finite-dimensional version of the argument used to establish Corollary~\ref{cor:hyperplane}, where we saw that $T_p\cap H^2(\mathbb{T}^d) = T_2 \cap H^p(\mathbb{T}^d)$ at a Hilbert point in $H^p(\mathbb{T}^d)$. 
\end{remark}

By Lemma~\ref{lem:Khintchin} and Theorem~\ref{thm:linear2}, we know that to get the optimal upper and lower bounds in Khintchin's inequality when $2<p<\infty$, we only need to investigate the $1$-homogeneous polynomials for which all the nonzero coefficients have the same modulus. Hence we require the following result. 
\begin{lemma}\label{lem:kinc} 
	If $2< p < \infty$, then
	\[d \mapsto \left\|\frac{1}{\sqrt{d}} \sum_{j=1}^d z_j \right\|_{H^p(\mathbb{T}^d)}\]
	is strictly increasing for $d\geq1$. 
\end{lemma}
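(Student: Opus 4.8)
The plan is to show that the quantity $M_d := \|d^{-1/2}\sum_{j=1}^d z_j\|_{H^p(\mathbb{T}^d)}$ strictly increases in $d$ by comparing the $d$-dimensional polynomial to the $(d+1)$-dimensional one through a conditioning/averaging argument. The natural idea is to embed the $d$-variable problem into $d+1$ variables: write $\psi_{d+1}(z) = (d+1)^{-1/2}\sum_{j=1}^{d+1} z_j$ and condition on the last variable $z_{d+1}$. The key mechanism to exploit is that appending one more independent Steinhaus variable with equal weight should, for $p>2$, strictly increase the $L^p$ norm relative to the $L^2$ normalization, because spreading the mass over more equal coefficients makes the sum ``more Gaussian'' and $p>2$ penalizes concentration.

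Concretely, I would proceed as follows. First I would use Theorem~\ref{thm:linear1}, which tells us that $\varphi_d(z) = d^{-1/2}\sum_{j=1}^d z_j$ is a Hilbert point in $H^p(\mathbb{T}^d)$, hence a \emph{maximizer} of the Khintchin functional $\mathscr{K}_p$ on the unit sphere of $\mathbb{C}^d$ (since for $p>2$ the equal-coefficient configuration gives the largest $H^p$ norm among unit-$H^2$-norm $1$-homogeneous polynomials, as established by Theorem~\ref{thm:linear2} and Lemma~\ref{lem:Khintchin}). This reframes $M_d = \max_{\|c\|=1} \mathscr{K}_p(c)$ over $\mathbb{C}^d$. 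Then the monotonicity $M_d < M_{d+1}$ reduces to showing that the maximum over the larger sphere strictly exceeds that over the smaller one. The smaller sphere embeds into the larger via $c \mapsto (c,0)$, so $M_d \le M_{d+1}$ is immediate; the content is \emph{strictness}. To get strict inequality I would take the extremizer $(d^{-1/2},\dots,d^{-1/2},0)$ on the $(d+1)$-sphere and perturb the last coordinate away from $0$, showing the norm strictly increases. Because this configuration has a zero coefficient, it is \emph{not} a critical point of $\mathscr{K}_p$ on the $(d+1)$-sphere (the only critical points with a maximal value are the equal-modulus ones by Theorem~\ref{thm:linear2}), so the gradient in the direction of activating $c_{d+1}$ is nonzero, forcing a strict increase.

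The cleanest route to strictness is probably a direct comparison using Theorem~\ref{thm:Plinear} and Lemma~\ref{lem:subharm}. Apply the iteration operator $\mathscr{P}_p$ (or examine the derivative condition \eqref{eq:intseq}) at the point $c = ((d/(d+1))^{1/2} d^{-1/2}, \dots, (d/(d+1))^{1/2} d^{-1/2}, \varepsilon)$ with small $\varepsilon>0$: here the first $d$ coefficients are equal and larger than the last, so by Lemma~\ref{lem:subharm} the integrals $I_j$ are strictly ordered, meaning the configuration is not stationary and the true maximizer on the $(d+1)$-sphere strictly dominates. Since by Theorem~\ref{thm:iterative2} the iterates converge to the balanced Hilbert point $(d+1)^{-1/2}\sum_{j=1}^{d+1} z_j$ and the norm increases along the relevant direction, we conclude $M_{d+1} > M_d$.

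The main obstacle I anticipate is making the strictness argument airtight rather than merely heuristic: one must rule out the degenerate possibility that activating the extra coordinate leaves the norm unchanged to first order \emph{and} all higher orders. The subharmonicity inequality of Lemma~\ref{lem:subharm} is the right tool here, since it provides a \emph{strict} inequality between the integrals $I_j$ whenever two coefficients differ, and this strictness propagates to a genuine increase of the maximal $H^p$ norm. An alternative, possibly slicker argument avoids optimization entirely: condition on $z_{d+1}$ in $\psi_{d+1}$ and use that $\int_{\mathbb{T}} |w + z_{d+1}|^p\,dm_1$-type averages, combined with the strict convexity afforded by $p>2$ and Lemma~\ref{lem:subharm}, force a strict gain. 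I would present whichever of these yields the shortest rigorous path, but I expect the delicate point to be confirming that the inequality is strict (not just non-strict) at every step.
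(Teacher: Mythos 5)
There is a genuine gap, and it sits exactly where you predicted the delicate point would be. Your first-order argument fails because the configuration $(d^{-1/2},\dots,d^{-1/2},0)$ \emph{is} a critical point of $\mathscr{K}_p$ on the unit sphere of $\mathbb{C}^{d+1}$: its nonzero coefficients all have the same modulus, so it is a Hilbert point in $H^p(\mathbb{T}^{d+1})$ by Theorem~\ref{thm:linear1}, hence a critical point by Lemma~\ref{lem:Khintchin}. (Concretely, $\partial_{\overline{c_{d+1}}}\|\varphi\|_{H^p(\mathbb{T}^{d+1})}^p$ is a multiple of $\int_{\mathbb{T}^{d+1}}|\varphi|^{p-2}\varphi\,\overline{z_{d+1}}\,dm_{d+1}$, which vanishes when $c_{d+1}=0$ because $\varphi$ is then independent of $z_{d+1}$.) So the assertion that ``the gradient in the direction of activating $c_{d+1}$ is nonzero'' is false, and the gain you need is of second order in $c_{d+1}$. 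Separately, your opening step is circular: Theorem~\ref{thm:linear2} and Lemma~\ref{lem:Khintchin} only say that every critical point has equal-modulus nonzero coefficients; the critical values are therefore exactly $M_1,\dots,M_d$ (one for each possible number of nonzero coordinates), and deciding that $M_d$ is the largest of these is precisely the content of the lemma you are trying to prove, so you cannot assume the all-equal point is the maximizer.

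Your fallback --- comparing the integrals $I_j$ of \eqref{eq:intseq} via Theorem~\ref{thm:Plinear} and Lemma~\ref{lem:subharm} at a perturbed configuration --- is the right instinct and is essentially the mechanism the paper uses, but as written it does not close: non-stationarity of a nearby point does not by itself yield $M_{d+1}>M_d$, and the appeal to Theorem~\ref{thm:iterative2} tacitly assumes that the $H^p$ norm increases along the $\mathscr{P}_p$-iterates, which is established nowhere. The paper instead integrates the strict inequality along an explicit path: with
\[\varphi_t(z)=\left(\frac{1-t}{d}+\frac{t}{d+1}\right)^{1/2}(z_1+\cdots+z_d)+\left(\frac{t}{d+1}\right)^{1/2}z_{d+1}\]
and $\Phi(t)=\|\varphi_t\|_{H^p(\mathbb{T}^{d+1})}^p$, Theorem~\ref{thm:Plinear} identifies $\Phi'(t)$ as a positive multiple of $I_{d+1}-I_1$, and Lemma~\ref{lem:subharm} gives $I_{d+1}>I_1$ for every $0<t<1$ because the first $d$ coefficients strictly exceed the last one there; integrating over $[0,1]$ gives $\Phi(0)<\Phi(1)$. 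If you replace the gradient and iteration arguments by such a one-parameter path (an integrated version of the conditioning idea you sketch at the end), the proof goes through.
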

\begin{proof}
	Fix $2<p<\infty$. For $0 \leq t \leq 1$, we consider the function
	\[\varphi_t(z): = \left(\frac{1-t}{d}+\frac{t}{d+1}\right)^{1/2} (z_1+\cdots+z_d) + \left(\frac{t}{d+1}\right)^{1/2} z_{d+1}\]
	and define $\Phi(t) := \|\varphi_t\|_{H^p(\mathbb{T}^d)}^p$. We need to prove that $\Phi(0)< \Phi(1)$ which we will do by showing that $\Phi'(t)>0$ for $0<t<1$. Writing $|\varphi_t|^p = (\varphi_t \overline{\varphi_t})^{p/2}$ and using the chain rule and the product rule, we find that
	\[\Phi'(t) = \frac{p}{2} \int_{\mathbb{T}^{d+1}} |\varphi_t(z)|^{p-2} \varphi_t(z) \cdot 2\frac{d}{dt} \varphi_t(\overline{z})\,dm_{d+1}(z).\]
	To proceed, we first note that we may replace $|\varphi_t|^{p-2} \varphi_t$ by $P(|\varphi_t|^{p-2} \varphi_t)$ by orthogonality. Next we compute
	\[2 \frac{d}{dt} \varphi_t(\overline{z}) = -\frac{1}{d(d+1)}\left(\frac{1-t}{d}+\frac{t}{d+1}\right)^{-1/2}(\overline{z_1} +\cdots+\overline{z_d}) + \left(\frac{1}{(d+1)t}\right)^{1/2}\overline{z_{d+1}}.\]
	By Theorem~\ref{thm:Plinear}, we find that 
	\begin{align*}
		\Phi'(t) = \frac{p^2}{4(d+1)} \Bigg(&-\int_0^1 \int_{\mathbb{T}^{d+1}}|(\varphi_t)_1(\zeta,r)|^{p-2} 2 r dr\,dm_{d+1}(\zeta) \\
		&+\int_0^1 \int_{\mathbb{T}^{d+1}}|(\varphi_t)_{d+1}(\zeta,r)|^{p-2} 2 r dr\,dm_{d+1}(\zeta)\Bigg). 
	\end{align*}
	Since $p>2$ and $\frac{1-t}{d}+\frac{t}{d+1}>\frac{t}{d+1}$ for $0<t<1$, we find that $\Phi'(t)>0$ by using Lemma~\ref{lem:subharm} as in the proof of Theorem~\ref{thm:linear2}. 
\end{proof}
\begin{theorem}
	[Khintchin's inequality \cite{BC02,KK01}] Fix $2<p<\infty$. We have
	\[\Bigg\|\sum_{j=1}^d c_j z_j \Bigg\|_{H^p(\mathbb{T}^d)} \leq {\Gamma\left(1+\frac{p}{2}\right)}^\frac{1}{p} \Bigg(\sum_{j=1}^d |c_j|^2\Bigg)^\frac{1}{2}\]
	for all complex numbers $c_1,\ldots , c_d$. The constant ${\Gamma\left(1+\frac{p}{2}\right)}^\frac{1}{p}$ is optimal. 
\end{theorem}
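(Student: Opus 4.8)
The plan is to combine the structural results already established so that the entire Khintchin inequality reduces to the monotonicity statement of Lemma~\ref{lem:kinc} together with an asymptotic evaluation of the extremal constant. First I would invoke Lemma~\ref{lem:Khintchin}, which identifies the critical points of $\mathscr{K}_p$ on the unit sphere of $\mathbb{C}^d$ with the Hilbert points among $1$-homogeneous polynomials. Since the unit sphere is compact and $\mathscr{K}_p$ is continuous, the maximum of $\mathscr{K}_p$ is attained at some critical point, hence at a Hilbert point. By Theorem~\ref{thm:linear2}, every such Hilbert point has the property that its nonzero coefficients all share a common modulus; after discarding the vanishing coordinates, the maximizer is therefore (up to unimodular rotations, which do not affect the $H^p$-norm) of the form $k^{-1/2}(z_{j_1}+\cdots+z_{j_k})$ for some $1\le k\le d$.

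Next I would use Lemma~\ref{lem:kinc} to pin down which value of $k$ achieves the supremum. That lemma asserts that
\[
d \mapsto \left\|\frac{1}{\sqrt{d}} \sum_{j=1}^d z_j \right\|_{H^p(\mathbb{T}^d)}
\]
is strictly increasing for $2<p<\infty$, so among the equal-modulus polynomials in $d$ variables, the largest $H^p$-norm is attained by using all $d$ variables, i.e.\ $k=d$. Consequently
\[
\max_{\substack{c\in\mathbb{C}^d\\ \|c\|_2=1}} \mathscr{K}_p(c) = \left\|\frac{1}{\sqrt{d}} \sum_{j=1}^d z_j \right\|_{H^p(\mathbb{T}^d)},
\]
which after unwinding the normalization yields the inequality $\|\sum c_j z_j\|_{H^p(\mathbb{T}^d)} \le C_{p,d}\,(\sum|c_j|^2)^{1/2}$ with the sharp constant $C_{p,d}=\big\|d^{-1/2}\sum_{j=1}^d z_j\big\|_{H^p(\mathbb{T}^d)}$ for each fixed $d$.

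It then remains to identify the dimension-free constant and its optimality. Because Lemma~\ref{lem:kinc} shows $C_{p,d}$ is increasing in $d$, the constant $\Gamma(1+p/2)^{1/p}$ claimed in the theorem must be $\lim_{d\to\infty} C_{p,d}$; I would establish this limit by a classical central-limit computation. Writing $S_d = d^{-1/2}\sum_{j=1}^d z_j$ with $z_j$ independent Steinhaus variables, the real and imaginary parts converge jointly in distribution to a complex Gaussian whose modulus $|S_d|$ tends to a Rayleigh distribution; evaluating $\mathbb{E}|S_\infty|^p$ for the standard complex Gaussian gives exactly $\Gamma(1+p/2)$, so $C_{p,d}\uparrow \Gamma(1+p/2)^{1/p}$. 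Since $C_{p,d}<\Gamma(1+p/2)^{1/p}$ for every finite $d$ but the values approach it, the inequality $\|\sum c_j z_j\|_{H^p(\mathbb{T}^d)} \le \Gamma(1+p/2)^{1/p}(\sum|c_j|^2)^{1/2}$ holds for all $d$, and no smaller constant works; this yields both the bound and its optimality.

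I expect the main obstacle to be the rigorous passage to the limit in the last paragraph: one must justify convergence of the $p$-th moments $\mathbb{E}|S_d|^p$ to $\mathbb{E}|S_\infty|^p$, not merely convergence in distribution. The cleanest route is to establish uniform integrability of $|S_d|^p$, which follows from a uniform $L^{p+\varepsilon}$ bound on $S_d$; such a bound is itself a consequence of the (already known, or trivially obtained from finiteness at each $d$) boundedness of the constants $C_{p,d}$, so the argument does not become circular. The two structural inputs, Lemma~\ref{lem:Khintchin} and Theorem~\ref{thm:linear2}, do all the genuinely hard work of reducing an optimization over the entire sphere to the single one-parameter family of equal-coefficient polynomials, after which only the moment computation and a uniform-integrability estimate remain.
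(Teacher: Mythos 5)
Your proposal follows exactly the paper's route: Lemma~\ref{lem:Khintchin} and Theorem~\ref{thm:linear2} reduce the extremal problem on the unit sphere to the equal-coefficient polynomials, Lemma~\ref{lem:kinc} identifies the sharp constant as the increasing limit $\lim_{d\to\infty}\bigl\|d^{-1/2}\sum_{j=1}^d z_j\bigr\|_{H^p(\mathbb{T}^d)}$, and the central limit theorem evaluates it as $\Gamma(1+p/2)^{1/p}$. The only difference is that you spell out the compactness and uniform-integrability details that the paper's two-line proof leaves implicit; these are correct and, as you note, non-circular.
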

\begin{proof}
	By Lemma~\ref{lem:Khintchin}, Theorem~\ref{thm:linear2}, and Lemma~\ref{lem:kinc}, we obtain that the asserted inequality holds with optimal constant equal to
	\[\lim_{d\to \infty}\left\|\frac{1}{\sqrt{d}} \sum_{j=1}^d z_j \right\|_{H^p(\mathbb{T}^d)}={\Gamma\left(1+\frac{p}{2}\right)}^\frac{1}{p}.\]
	The limit can be evaluated by the central limit theorem, since the independent complex-valued random variables $(z_j)_{j\geq1}$ have mean $0$ and variance $1$. 
\end{proof}
\begin{remark}
	The proof of Khintchin's inequality for $1 \leq p<2$ in \cite{KK01} requires rather technical estimates. This indicates why Conjecture~\ref{conj:12} could be more difficult to establish compared to Theorem~\ref{thm:linear2}, since a positive answer to the former would simplify the proof of Khintchin's inequalty for $1 \leq p < 2$ substantially. 
\end{remark}

\section{A Hilbert point in \texorpdfstring{$H^4(\mathbb{T}^3)$}{H4(T3)}} \label{sec:example} 
We have so far devoted our attention to two classes of Hilbert points. If $\varphi = CI$ for a constant $C\neq0$ and an inner function $I$, then $\varphi$ is a Hilbert point for every $1 \leq p \leq \infty$ by Corollary~\ref{cor:inneryes}. If $\varphi$ is a $1$-homogeneous polynomial, then it follows from Theorem~\ref{thm:linear1} and Theorem~\ref{thm:linear2} that if $\varphi$ is a Hilbert point in $H^p(\mathbb{T}^d)$ for \emph{some} $2<p\leq\infty$, then it is a Hilbert point in $H^p(\mathbb{T}^d)$ for every $1 \leq p \leq \infty$. Conjecture~\ref{conj:12} implies that the same statement should hold if $2<p\leq \infty$ is replaced by $1 \leq p < 2$.

The purpose of the present section is to demonstrate that in general, when $d\geq2$, the Hilbert points depend on $p$. We begin with the following result, which is inspired by \cite[Ex.~3.4]{BOS21}. 
\begin{theorem}\label{thm:3ortho} 
	The function
	\[\varphi(z) = c_1 z_1^3 + c_2 z_2^3 + c_3 z_3^3 + c_4 z_1 z_2 z_3\]
	is a Hilbert point in $H^4(\mathbb{T}^3)$ if and only if the nonzero coefficients of $\varphi$ all have the same modulus. 
\end{theorem}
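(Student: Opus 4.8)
The plan is to invoke Theorem~\ref{thm:duality}~(a) with $p=4$. Since $|\varphi|^{p-2}\varphi=|\varphi|^2\varphi=\varphi^2\overline{\varphi}$ and $p>2$ requires no closure precaution, $\varphi$ is a Hilbert point in $H^4(\mathbb{T}^3)$ if and only if $P(\varphi^2\overline{\varphi})=\lambda\varphi$ for some $\lambda>0$. Because $\varphi^2\overline{\varphi}$ is a trigonometric polynomial, everything reduces to an exact computation of finitely many Fourier coefficients, with no approximation argument needed. First I would record that $\varphi$ is $3$-homogeneous, so $\varphi^2\overline{\varphi}$ is $3$-homogeneous and $P(\varphi^2\overline{\varphi})$ is a linear combination of the degree-$3$ monomials $z^\gamma$ with $\gamma\in\mathbb{N}_0^3$, $|\gamma|=3$.

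The key structural observation, which is what makes this particular $\varphi$ special, is that each of the four exponent vectors $(3,0,0)$, $(0,3,0)$, $(0,0,3)$, $(1,1,1)$ has all three entries congruent modulo $3$. This congruence is additive, hence preserved when two monomials of $\varphi$ are multiplied (to form $\varphi^2$) and then a monomial of $\varphi$ is subtracted off (multiplication by $\overline{\varphi}$, since $\overline{z_j}=z_j^{-1}$ on $\mathbb{T}^3$). Consequently every monomial occurring in $\varphi^2\overline{\varphi}$ has exponent vector with all three entries congruent modulo $3$; among degree-$3$ monomials with nonnegative exponents, the only such vectors are precisely $(3,0,0)$, $(0,3,0)$, $(0,0,3)$, $(1,1,1)$. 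Thus $P(\varphi^2\overline{\varphi})$ automatically lies in the four-dimensional span of the monomials of $\varphi$, and no off-diagonal monomials such as $z_1^2z_2$ can appear. This is the feature inherited from \cite[Ex.~3.4]{BOS21}.

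It then remains to compute the four surviving coefficients. Writing $S=|c_1|^2+|c_2|^2+|c_3|^2+|c_4|^2$, a direct expansion of $\varphi^2$ and pairing each of its ten monomials against the four monomials of $\overline{\varphi}$ yields that the coefficient of $z_1^3$ in $P(\varphi^2\overline{\varphi})$ equals $c_1\big(|c_1|^2+2(|c_2|^2+|c_3|^2+|c_4|^2)\big)=c_1(2S-|c_1|^2)$, and symmetrically the coefficients of $z_2^3$, $z_3^3$, $z_1z_2z_3$ equal $c_2(2S-|c_2|^2)$, $c_3(2S-|c_3|^2)$, $c_4(2S-|c_4|^2)$. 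Hence the eigenvalue equation $P(\varphi^2\overline{\varphi})=\lambda\varphi$ is equivalent to $c_j(2S-|c_j|^2)=\lambda c_j$ for $j=1,2,3,4$, that is, to $2S-|c_j|^2=\lambda$ for every index $j$ with $c_j\neq0$ (the indices with $c_j=0$ impose no constraint).

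Finally I would read off both directions. The system $2S-|c_j|^2=\lambda$ forces $|c_j|^2=2S-\lambda$ to take the same value for all indices with $c_j\neq0$, which is exactly the assertion that the nonzero coefficients of $\varphi$ have equal modulus. Conversely, if the $m\geq1$ nonzero coefficients all have modulus-squared equal to $t>0$, then $S=mt$ and the common value $\lambda=2S-t=(2m-1)t$ is strictly positive, so $P(\varphi^2\overline{\varphi})=\lambda\varphi$ holds and $\varphi$ is a Hilbert point. I expect the only place demanding care to be the bookkeeping in the coefficient computation, namely accounting for every contribution to each of the four monomials with the correct multiplicities coming from the cross terms of $\varphi^2$, together with the final check that $\lambda>0$, which Theorem~\ref{thm:duality}~(a) requires.
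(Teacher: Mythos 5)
Your proof is correct and follows essentially the same route as the paper: apply Theorem~\ref{thm:duality}~(a) with $p=4$, compute $P(|\varphi|^2\varphi)$ explicitly to find that the coefficient of each monomial of $\varphi$ is $c_j\bigl(2\|\varphi\|_{H^2(\mathbb{T}^3)}^2-|c_j|^2\bigr)$, and read off the equal-modulus condition from the eigenvalue equation. Your mod-$3$ congruence argument for why no extraneous degree-$3$ monomials survive the projection is a tidy way of packaging what the paper obtains by direct expansion of $|\varphi|^2$, and your explicit check that $\lambda>0$ is a welcome detail.
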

\begin{proof}
	We will use Theorem~\ref{thm:duality} (a), and begin by expanding 
	\begin{align*}
		|\varphi(z)|^2 = \|\varphi\|_{H^2(\mathbb{T}^3)}^2 &+ c_1 z_1^3 \left(\overline{c_2 z_2^3} + \overline{c_3 z_3^3} + \overline{c_4 z_1 z_2 z_3} \right) \\
		&+ c_2 z_2^3 \left(\overline{c_1 z_1^3} + \overline{c_3 z_3^3} + \overline{c_4 z_1 z_2 z_3} \right) \\
		&+ c_3 z_3^3 \left(\overline{c_1 z_1^3} + \overline{c_2 z_2^3} + \overline{c_4 z_1 z_2 z_3} \right) \\
		&+ c_1c_2c_3 z_1z_2z_3 \left(\overline{c_1 z_1^3} + \overline{c_2 z_2^3} + \overline{c_3 z_3^3}\right). 
	\end{align*}
	Hence, we find that
	\[P\left(|\varphi|^2 \varphi\right) = \sum_{j=1}^3 c_j z_j \left(2\|\varphi\|_{H^2(\mathbb{T}^3)}^2-|c_j|^2\right) + c_4 z_1z_2z_3 \left(2\|\varphi\|_{H^2(\mathbb{T}^3)}^2-|c_4|^2\right),\]
	from which we easily deduce that the solutions of the equation $P(|\varphi|^2 \varphi) = \lambda \varphi$ have the stated form. 
\end{proof}

If $c_4=0$, then the conclusion of Theorem~\ref{thm:3ortho} can be obtained directly from Theorem~\ref{thm:linear1} and Theorem~\ref{thm:linear2} by substituting $\zeta_1:=z_1^3$, $\zeta_2:=z_2^3$ and $\zeta_3:=z_3^3$. However, the same argument shows that $\varphi$ is a Hilbert point also in $H^p(\mathbb{T}^d)$ for every $1\leq p\leq\infty$. We now turn to the main result of this section, where we see that putting $c_3=0$ leads to a completely different situation. 
\begin{theorem}\label{thm:orthocex} 
	The function $\varphi(z) = z_1^3+z_2^3 + z_1z_2z_3$ is a Hilbert point in $H^2(\mathbb{T}^3)$ and $H^4(\mathbb{T}^3)$, but not in $H^p(\mathbb{T}^3)$ for any $4<p\leq \infty$. 
\end{theorem}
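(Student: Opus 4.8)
The assertion has three separate parts. That $\varphi$ is a Hilbert point in $H^2(\mathbb{T}^3)$ is immediate, since every nontrivial function is a Hilbert point there. That $\varphi$ is a Hilbert point in $H^4(\mathbb{T}^3)$ follows directly from Theorem~\ref{thm:3ortho} with $c_1=c_2=c_4=1$ and $c_3=0$, since then all nonzero coefficients have modulus $1$. The entire content of the theorem is therefore the negative statement: $\varphi$ is \emph{not} a Hilbert point in $H^p(\mathbb{T}^3)$ for any $4<p\leq\infty$. I will concentrate on this.

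\textbf{Reduction to a single Fourier coefficient.} By Theorem~\ref{thm:duality}~(a), for $4<p<\infty$ the function $\varphi$ is a Hilbert point if and only if $P(|\varphi|^{p-2}\varphi)=\lambda\varphi$ for some $\lambda>0$. Since $\varphi$ is $3$-homogeneous (under the scaling $z_j\mapsto e^{i\theta}z_j$, each monomial $z_1^3,z_2^3,z_1z_2z_3$ picks up $e^{3i\theta}$), the function $|\varphi|^{p-2}\varphi$ is again $3$-homogeneous, so $P(|\varphi|^{p-2}\varphi)$ is a $3$-homogeneous polynomial. The candidate monomials in degree $3$ that could appear are exactly $z_1^3,z_2^3,z_3^3,z_1^2z_2,\ldots$; the equation $P(|\varphi|^{p-2}\varphi)=\lambda\varphi$ forces the coefficients of $\varphi$ to match and, crucially, forces the coefficient of every \emph{other} degree-$3$ monomial to vanish. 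The natural obstruction is the coefficient at $z_3^3$, since $\varphi$ itself has no $z_3^3$ term. Thus I would reduce the whole problem to showing that the Fourier coefficient
\[
\widehat{|\varphi|^{p-2}\varphi}(3e_3)=\int_{\mathbb{T}^3}|\varphi(z)|^{p-2}\varphi(z)\,\overline{z_3^3}\,dm_3(z)
\]
does \emph{not} vanish for $p>4$. As the introduction indicates, a change of variables should convert this into a Fourier coefficient of $|\zeta_1+\zeta_2+\zeta_3|^{p-2}(\zeta_1+\zeta_2+\zeta_3)$; the substitution $z_1^3=\zeta_1$, $z_2^3=\zeta_2$, together with absorbing the $z_1z_2z_3$ term, should linearize the modulus at the cost of the outer monomial, and I would track carefully how $\overline{z_3^3}$ transforms.

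\textbf{Sign/nonvanishing of the coefficient.} Once the obstruction is written as an explicit integral $F(p)$, the goal is to prove $F(p)\neq0$ for $p>4$ while $F(4)=0$ (consistent with the $H^4$ case). The value $F(4)=0$ can be checked by the same elementary expansion used in Theorem~\ref{thm:3ortho}. For the range $p>4$ I expect the cleanest route is \emph{analytic continuation in $p$}, as the remark following Theorem~\ref{thm:Plinear} suggests: the map $p\mapsto F(p)$ extends holomorphically to a right half-plane, and if it vanished on a set with an accumulation point (or on too large a set) it would vanish identically, contradicting a computable nonzero value. To rule out identical vanishing, I would evaluate $F$ at the even integers $p=2n+2$, where $|\varphi|^{p-2}=|\varphi|^{2n}$ expands by the multinomial theorem into an explicit finite sum, and compute the coefficient at $z_3^3$ combinatorially; showing it is nonzero for some $n\geq2$ (equivalently some $p=6,8,\dots$) combined with holomorphy pins down $F\not\equiv0$, and a sign or monotonicity argument then upgrades this to $F(p)\neq0$ for all $p>4$.

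\textbf{The endpoint $p=\infty$ and the main obstacle.} For $p=\infty$ duality is unavailable, so I would argue directly from the defining inequality~\eqref{eq:cpoint}: either produce an explicit competitor $f$ orthogonal to $\varphi$ in $L^2$ with $\|\varphi+f\|_{H^\infty}<\|\varphi\|_{H^\infty}=\|\varphi\|_{\infty}$ (here $\|\varphi\|_{H^\infty(\mathbb{T}^3)}=3$, attained at $z_1=z_2=z_3=1$), or pass to the limit $p\to\infty$ in the $H^p$ obstruction, using that a Hilbert point in $H^\infty$ would force the $p\to\infty$ limit of the normalized defect to vanish. The main obstacle I anticipate is the \emph{nonvanishing} step: computing $F$ at even integers is mechanical, but guaranteeing that the combinatorial coefficient is nonzero—and that a single nonzero value together with analytic continuation genuinely excludes \emph{every} $p>4$, rather than merely a discrete set—requires either a clean closed form for $F(p)$ or a convexity/sign argument controlling $F$ across the whole interval. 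The $p=\infty$ endpoint is secondary and should follow once the interior case is understood, but it demands its own direct estimate since the duality machinery degenerates there.
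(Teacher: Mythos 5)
Your reduction is the right one and matches the paper: the $H^2$ and $H^4$ cases are dispatched exactly as you say, the obstruction for $p<\infty$ is the single Fourier coefficient $\Phi(p)=\int_{\mathbb{T}^3}|\varphi|^{p-2}\varphi\,\overline{z_3^3}\,dm_3$, and the change of variables $\zeta_1=z_1^3$, $\zeta_2=z_2^3$, $\zeta_3=z_1z_2z_3$ converts this into $\int_{\mathbb{T}^3}|\psi|^{p-2}\psi\,\zeta_1\zeta_2\overline{\zeta_3^3}\,dm_3$ with $\psi=\zeta_1+\zeta_2+\zeta_3$. The computation at even integers $p=2(n+1)$ is also as you predict: one gets $(n+1)\sum_{|\beta|=n}\binom{n}{\beta}^2\beta_1\beta_2/((\beta_3+1)(\beta_3+2)(\beta_3+3))$, which vanishes for $n=0,1$ and is positive for $n\geq 2$. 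The $p=\infty$ endpoint is handled in the paper by the explicit competitor $f=-\varepsilon z_3^3$ together with a direct sup-norm estimate, which is the first of your two suggested routes.

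The genuine gap is the step you yourself flag as the main obstacle and then leave unresolved: analytic continuation plus a single nonzero value only shows $\Phi\not\equiv 0$, i.e.\ that the zero set of $\Phi$ in $\{p>4\}$ is discrete. That is strictly weaker than the theorem, which asserts $\Phi(p)\neq 0$ for \emph{every} $p>4$. No ``sign or monotonicity argument'' is available without more structure, and the paper's resolution is precisely the ``clean closed form'' you hoped for: recognizing the combinatorial sum as a weighted Bergman norm of $\partial^2_{w_1 w_2}(w_1+w_2+w_3)^n$, one obtains, for even integers $p>4$,
\[
\Phi(p) = \binom{p/2}{3}\,\frac{(p-2)(p-4)}{4}\int_{\mathbb{D}^3}|w_1+w_2+w_3|^{p-6}\,3(1-|w_3|^2)^2\,dA_3(w),
\]
and it is this \emph{identity} (not the nonvanishing) that is propagated to all real $p>4$ by analytic continuation, using that the positive even integers violate the Blaschke condition in a half-plane and that $\Phi$ grows at most exponentially. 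The right-hand side is manifestly positive for every $p>4$, which closes the argument. Without this (or an equivalent positivity representation), your proof establishes only that $\varphi$ fails to be a Hilbert point for all $p>4$ outside a possible discrete exceptional set.
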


Numerical evidence (see Figure~\ref{fig:plot}) suggests that this $\varphi$ is a Hilbert point in $H^p(\mathbb{T}^d)$ only when $p=2,4$. Unfortunately, we are only able to verify analytically that there is possibly a finite number of $p$ in $[1,4)\setminus \{2\}$ for which $\varphi$ is a Hilbert point in $H^p(\mathbb{T}^d)$.

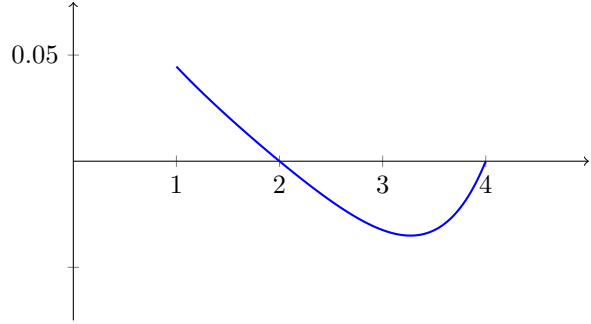
\begin{figure}
	\centering
	\begin{tikzpicture}
		\begin{axis}
			[axis equal image,
			axis lines=middle,
			axis line style=thin,
			xmin=0,
			xmax=5,
			xtick={1,2,3,4},
			xticklabels={1,2,3,4,},
			ymin=-0.75,
			ymax=0.75,
			ytick={-0.5,0.5},
			yticklabels={,0.05},
			every axis x label/.style={ at={(ticklabel* cs:1.025)}, 
				anchor=west,},
			every axis y label/.style={ at={(ticklabel* cs:1.025)}, 
				anchor=south,},
			axis line style={->}, y post scale = 2.0601]
			\addplot[thick, color=blue]
			coordinates{
			(1.000000000000000, 0.44602043138657)
			(1.010000000000000, 0.44089350144288)
			(1.020000000000000, 0.43579271603170)
			(1.030000000000000, 0.43070942074569)
			(1.040000000000000, 0.42564542402796)
			(1.050000000000000, 0.42060549017583)
			(1.060000000000000, 0.41552983110236)
			(1.070000000000000, 0.41052868000383)
			(1.080000000000000, 0.40554647421709)
			(1.090000000000000, 0.40058286872343)
			(1.100000000000000, 0.39563759459941)
			(1.110000000000000, 0.39071035018119)
			(1.120000000000000, 0.38580083537391)
			(1.130000000000000, 0.38090876581329)
			(1.140000000000000, 0.37603344707577)
			(1.150000000000000, 0.37117585628682)
			(1.160000000000000, 0.36633446351988)
			(1.170000000000000, 0.36150943367353)
			(1.180000000000000, 0.35669919830104)
			(1.190000000000000, 0.35190726473219)
			(1.200000000000000, 0.34712749095253)
			(1.210000000000000, 0.34236750572400)
			(1.220000000000000, 0.33762031887734)
			(1.230000000000000, 0.33288733429925)
			(1.240000000000000, 0.32816779611826)
			(1.250000000000000, 0.32346725857506)
			(1.260000000000000, 0.31877815385140)
			(1.270000000000000, 0.31410211909615)
			(1.280000000000000, 0.30944259331847)
			(1.290000000000000, 0.30479212222756)
			(1.300000000000000, 0.30015708573791)
			(1.310000000000000, 0.29553513231651)
			(1.320000000000000, 0.29092555391862)
			(1.330000000000000, 0.28632943817354)
			(1.340000000000000, 0.28174638031662)
			(1.350000000000000, 0.27717345217066)
			(1.360000000000000, 0.27261341059549)
			(1.370000000000000, 0.26806562135818)
			(1.380000000000000, 0.26352930221567)
			(1.390000000000000, 0.25900447162013)
			(1.400000000000000, 0.25449097496257)
			(1.410000000000000, 0.24998791380908)
			(1.420000000000000, 0.24549709853965)
			(1.430000000000000, 0.24101657913875)
			(1.440000000000000, 0.23654665439391)
			(1.450000000000000, 0.23208721111868)
			(1.460000000000000, 0.22763810847490)
			(1.470000000000000, 0.22319921417228)
			(1.480000000000000, 0.21877011775202)
			(1.490000000000000, 0.21435137959401)
			(1.500000000000000, 0.20994217175876)
			(1.510000000000000, 0.20554261240143)
			(1.520000000000000, 0.20115257839999)
			(1.530000000000000, 0.19677142867160)
			(1.540000000000000, 0.19240038760090)
			(1.550000000000000, 0.18803805969437)
			(1.560000000000000, 0.18368701672100)
			(1.570000000000001, 0.17934053320628)
			(1.580000000000001, 0.17500495807323)
			(1.590000000000001, 0.17067814782447)
			(1.600000000000001, 0.16635987862178)
			(1.610000000000001, 0.16205039936975)
			(1.620000000000001, 0.15774807112006)
			(1.630000000000001, 0.15345663048244)
			(1.640000000000001, 0.14917076022249)
			(1.650000000000001, 0.14489408207824)
			(1.660000000000001, 0.14062562281695)
			(1.670000000000001, 0.13636604965566)
			(1.680000000000001, 0.13211218764025)
			(1.690000000000001, 0.12786737822065)
			(1.700000000000001, 0.12363034876054)
			(1.710000000000001, 0.11940106890715)
			(1.720000000000001, 0.11518022748817)
			(1.730000000000001, 0.11096551500483)
			(1.740000000000001, 0.10675919856466)
			(1.750000000000001, 0.10256038953321)
			(1.760000000000001, 0.09836906292868)
			(1.770000000000001, 0.09418514862290)
			(1.780000000000001, 0.09000874127443)
			(1.790000000000001, 0.08583974650788)
			(1.800000000000001, 0.08167827444427)
			(1.810000000000001, 0.07752391068685)
			(1.820000000000001, 0.07337691459083)
			(1.830000000000001, 0.06923739804141)
			(1.840000000000001, 0.06510527011202)
			(1.850000000000001, 0.06098042637290)
			(1.860000000000001, 0.05686292279838)
			(1.870000000000001, 0.05275294155011)
			(1.880000000000001, 0.04864995989894)
			(1.890000000000001, 0.04455458629518)
			(1.900000000000001, 0.04046656455712)
			(1.910000000000001, 0.03638589401328)
			(1.920000000000001, 0.03231269399742)
			(1.930000000000001, 0.02824697490587)
			(1.940000000000001, 0.02418878626434)
			(1.950000000000001, 0.02013816535973)
			(1.960000000000001, 0.01609517998279)
			(1.970000000000001, 0.01205976089483)
			(1.980000000000001, 0.00803207891577)
			(1.990000000000001, 0.00401216992413)
			(2.000000000000001, -0.00000000000000)
			(2.010000000000001, -0.00400423438196)
			(2.020000000000000, -0.00800047582151)
			(2.030000000000000, -0.01198861512476)
			(2.040000000000000, -0.01596866249842)
			(2.050000000000000, -0.01994047445045)
			(2.060000000000000, -0.02390394736423)
			(2.069999999999999, -0.02785902050266)
			(2.079999999999999, -0.03180561528551)
			(2.089999999999999, -0.03574354795885)
			(2.099999999999999, -0.03967304905697)
			(2.109999999999999, -0.04359341471241)
			(2.119999999999998, -0.04750479376030)
			(2.129999999999998, -0.05140693724075)
			(2.139999999999998, -0.05529995887472)
			(2.149999999999998, -0.05918358842042)
			(2.159999999999997, -0.06305761878540)
			(2.169999999999997, -0.06692184218363)
			(2.179999999999997, -0.07077636335484)
			(2.189999999999997, -0.07462134478120)
			(2.199999999999997, -0.07845567954016)
			(2.209999999999996, -0.08227922542959)
			(2.219999999999996, -0.08609273407798)
			(2.229999999999996, -0.08989542553718)
			(2.239999999999996, -0.09368724733856)
			(2.249999999999996, -0.09746798677154)
			(2.259999999999995, -0.10123740592842)
			(2.269999999999995, -0.10499537246051)
			(2.279999999999995, -0.10874166519422)
			(2.289999999999995, -0.11247586883981)
			(2.299999999999994, -0.11619787095701)
			(2.309999999999994, -0.11990731490311)
			(2.319999999999994, -0.12360418503588)
			(2.329999999999994, -0.12728822018976)
			(2.339999999999994, -0.13095894969865)
			(2.349999999999993, -0.13461748277327)
			(2.359999999999993, -0.13826013029022)
			(2.369999999999993, -0.14188979674361)
			(2.379999999999993, -0.14550499662995)
			(2.389999999999993, -0.14910579913493)
			(2.399999999999992, -0.15269179106263)
			(2.409999999999992, -0.15626261700118)
			(2.419999999999992, -0.15981790980603)
			(2.429999999999992, -0.16335745672660)
			(2.439999999999992, -0.16688087255307)
			(2.449999999999991, -0.17038807860240)
			(2.459999999999991, -0.17387825934420)
			(2.469999999999991, -0.17735102827419)
			(2.479999999999991, -0.18080649157755)
			(2.489999999999990, -0.18424409536444)
			(2.499999999999990, -0.18766335329301)
			(2.509999999999990, -0.19106410568394)
			(2.519999999999990, -0.19444576642479)
			(2.529999999999990, -0.19780795300380)
			(2.539999999999989, -0.20115035075907)
			(2.549999999999989, -0.20447251664204)
			(2.559999999999989, -0.20777394784904)
			(2.569999999999989, -0.21105428101048)
			(2.579999999999989, -0.21431305568764)
			(2.589999999999988, -0.21754982619781)
			(2.599999999999988, -0.22076410392992)
			(2.609999999999988, -0.22395543078355)
			(2.619999999999988, -0.22712336399876)
			(2.629999999999987, -0.23026739842997)
			(2.639999999999987, -0.23338740995555)
			(2.649999999999987, -0.23648215849377)
			(2.659999999999987, -0.23955132960385)
			(2.669999999999987, -0.24259389718890)
			(2.679999999999986, -0.24561056132103)
			(2.689999999999986, -0.24860021011900)
			(2.699999999999986, -0.25156212212344)
			(2.709999999999986, -0.25449591119472)
			(2.719999999999986, -0.25740111350287)
			(2.729999999999985, -0.26027666269948)
			(2.739999999999985, -0.26312221450411)
			(2.749999999999985, -0.26593720706011)
			(2.759999999999985, -0.26872084117321)
			(2.769999999999984, -0.27147273817115)
			(2.779999999999984, -0.27419210200644)
			(2.789999999999984, -0.27687831381503)
			(2.799999999999984, -0.27953067231286)
			(2.809999999999984, -0.28214847635276)
			(2.819999999999983, -0.28473106870955)
			(2.829999999999983, -0.28727770961519)
			(2.839999999999983, -0.28978768599583)
			(2.849999999999983, -0.29226029608319)
			(2.859999999999983, -0.29469473605976)
			(2.869999999999982, -0.29709037078252)
			(2.879999999999982, -0.29944620834474)
			(2.889999999999982, -0.30176156977674)
			(2.899999999999982, -0.30403562205055)
			(2.909999999999981, -0.30626752784283)
			(2.919999999999981, -0.30845662259775)
			(2.929999999999981, -0.31060207479888)
			(2.939999999999981, -0.31270279996068)
			(2.949999999999981, -0.31475800547092)
			(2.959999999999980, -0.31676687428925)
			(2.969999999999980, -0.31872866635086)
			(2.979999999999980, -0.32064208247602)
			(2.989999999999980, -0.32250630830929)
			(2.999999999999980, -0.32432043956026)
			(3.009999999999979, -0.32608367968838)
			(3.019999999999979, -0.32779504152692)
			(3.029999999999979, -0.32945328878806)
			(3.039999999999979, -0.33105752520099)
			(3.049999999999979, -0.33260655555155)
			(3.059999999999978, -0.33409971587601)
			(3.069999999999978, -0.33553574728986)
			(3.079999999999978, -0.33691357846186)
			(3.089999999999978, -0.33823211037759)
			(3.099999999999977, -0.33949029895932)
			(3.109999999999977, -0.34068690800246)
			(3.119999999999977, -0.34182083890799)
			(3.129999999999977, -0.34289093514060)
			(3.139999999999977, -0.34389609606682)
			(3.149999999999976, -0.34483472015304)
			(3.159999999999976, -0.34570616221129)
			(3.169999999999976, -0.34650886555618)
			(3.179999999999976, -0.34724165832037)
			(3.189999999999976, -0.34790326219103)
			(3.199999999999975, -0.34849232653186)
			(3.209999999999975, -0.34900756951943)
			(3.219999999999975, -0.34944763537125)
			(3.229999999999975, -0.34981117211113)
			(3.239999999999974, -0.35009692289500)
			(3.249999999999974, -0.35030324476014)
			(3.259999999999974, -0.35042884601828)
			(3.269999999999974, -0.35047235281975)
			(3.279999999999974, -0.35043210831200)
			(3.289999999999973, -0.35030667478891)
			(3.299999999999973, -0.35009471130561)
			(3.309999999999973, -0.34979461053989)
			(3.319999999999973, -0.34940491643561)
			(3.329999999999973, -0.34892374094380)
			(3.339999999999972, -0.34834966914931)
			(3.349999999999972, -0.34768106913197)
			(3.359999999999972, -0.34691629231509)
			(3.369999999999972, -0.34605352038944)
			(3.379999999999971, -0.34509132846215)
			(3.389999999999971, -0.34402783553170)
			(3.399999999999971, -0.34286139031357)
			(3.409999999999971, -0.34159005284098)
			(3.419999999999971, -0.34021210049388)
			(3.429999999999970, -0.33872569604677)
			(3.439999999999970, -0.33712898326448)
			(3.449999999999970, -0.33542008884632)
			(3.459999999999970, -0.33359708681605)
			(3.469999999999970, -0.33165804555136)
			(3.479999999999969, -0.32960098314860)
			(3.489999999999969, -0.32742392210608)
			(3.499999999999969, -0.32512481885678)
			(3.509999999999969, -0.32270155394265)
			(3.519999999999968, -0.32015219296789)
			(3.529999999999968, -0.31747454530427)
			(3.539999999999968, -0.31466645539601)
			(3.549999999999968, -0.31172575421456)
			(3.559999999999968, -0.30865022787453)
			(3.569999999999967, -0.30543764196157)
			(3.579999999999967, -0.30208573425395)
			(3.589999999999967, -0.29859218098896)
			(3.599999999999967, -0.29495464595936)
			(3.609999999999967, -0.29117075430692)
			(3.619999999999966, -0.28723812365275)
			(3.629999999999966, -0.28315430044782)
			(3.639999999999966, -0.27891681791911)
			(3.649999999999966, -0.27452316226305)
			(3.659999999999966, -0.26997082379019)
			(3.669999999999965, -0.26525715703262)
			(3.679999999999965, -0.26037957940691)
			(3.689999999999965, -0.25533543183205)
			(3.699999999999965, -0.25012202585055)
			(3.709999999999964, -0.24473663003870)
			(3.719999999999964, -0.23917648463829)
			(3.729999999999964, -0.23343878110644)
			(3.739999999999964, -0.22752067879812)
			(3.749999999999964, -0.22141928035245)
			(3.759999999999963, -0.21513167182581)
			(3.769999999999963, -0.20865537575585)
			(3.779999999999963, -0.20198569773738)
			(3.789999999999963, -0.19512149241866)
			(3.799999999999963, -0.18805896893099)
			(3.809999999999962, -0.18079497695720)
			(3.819999999999962, -0.17332651081915)
			(3.829999999999962, -0.16565001182239)
			(3.839999999999962, -0.15776237795998)
			(3.849999999999961, -0.14966030216862)
			(3.859999999999961, -0.14134047221808)
			(3.869999999999961, -0.13279936435550)
			(3.879999999999961, -0.12403365373261)
			(3.889999999999961, -0.11503978145547)
			(3.899999999999960, -0.10581421586687)
			(3.909999999999960, -0.09635336029964)
			(3.919999999999960, -0.08665357320237)
			(3.929999999999960, -0.07671116458259)
			(3.939999999999960, -0.06652238713814)
			(3.949999999999959, -0.05608340357927)
			(3.959999999999959, -0.04539052629184)
			(3.969999999999959, -0.03443973001283)
			(3.979999999999959, -0.02322703466383)
			(3.989999999999958, -0.01174847758047)
			(3.999999999999958, -0.00000000000005)
			(4,0)
			};
		\end{axis}
	\end{tikzpicture}
	\caption{The Fourier coefficient \eqref{eq:Fcoeff} for $1 \leq p \leq 4$.}
	\label{fig:plot}
\end{figure}

\begin{proof}
	We begin with the case $p=\infty$, where we need to establish the estimate
	\[\left\|z_1^3+z_2^3+z_1z_2z_3-\varepsilon z_3^3\right\|_{H^\infty(\mathbb{T}^3)} < 3\]
	to see that $\varphi$ is not a Hilbert point in $H^\infty(\mathbb{T}^3)$ by \eqref{eq:cpoint}. To this end set $\zeta_1:=\overline{z_1}z_2$ and $\zeta_2:=\overline{z_1}^2 z_2 z_3 $ so that our task is to show that
	\[\left\| 1+\zeta_1+\zeta_2-\varepsilon \overline{\zeta_1}\zeta_2^3\right\|_{H^\infty(\mathbb{T}^3)} < 3. \]
	Now if $|1+\zeta_1|\leq 2-2\varepsilon$ or $|1+\zeta_2| \leq 2-2\varepsilon$, then trivially $\left|1+\zeta_1+\zeta_2-\varepsilon \overline{\zeta_1} \zeta_2^3 \right| \leq 3-\varepsilon$. It therefore suffices to consider $\zeta_1$ and $\zeta_2$ such that $|1+\zeta_1| > 2-2\varepsilon$ or $|1+\zeta_2| > 2-2\varepsilon$. In this case, we may finish the proof by an easy computation which is essentially identical to that given in the proof of \cite[Lem.~2.5]{BOS21}.
	
	We assume from now on that $p < \infty$. It is clear that we may rewrite the Fourier series of $\varphi$ as a Fourier series in the variables $\zeta_1:=z_1^3$, $\zeta_2:=z_2^3$, $\zeta_3:=z_1 z_2 z_3$. Such a rewriting reveals, by symmetry, that the Fourier coefficients of $P(|\varphi|^{p-2} \varphi)$ with respect to the three monomials $z_1^3$, $z_2^3$, $z_1z_2z_3$ are identical. Since the function $\varphi$ is $3$-homogeneous, there can be at most one additional term in the Fourier series of $P(|\varphi|^{p-2} \varphi)$, namely a multiple of $z_3^3$. We deduce from this that $\varphi$ is a Hilbert point in $H^p(\mathbb{T}^3)$ if and only if $\Phi(p)=0$, where 
	\begin{equation}\label{eq:Fcoeff} 
		\Phi(p):=\int_{\mathbb{T}^3} |\varphi(z)|^{p-2} \varphi(z) \,\overline{z_3^3} \,dm_3(z). 
	\end{equation}
	Using the notation $\psi(\zeta):=\zeta_1+\zeta_2+\zeta_3$, we get by the change of variables introduced above that 
	\begin{equation}\label{eq:vanish} 
		\Phi(p) = \int_{\mathbb{T}^3} |\psi(\zeta)|^{p-2} \psi(\zeta) \, \zeta_1 \zeta_2 \overline{\zeta_3^3} \,dm_3(\zeta). 
	\end{equation}
	Assume that $p-2=2n$, where $n$ is a nonnegative integer and expand
	\[\big(\psi(\zeta)\big)^{n+1} = \sum_{|\alpha|=n+1} \binom{n+1}{\alpha} \zeta^\alpha \qquad \text{and}\qquad \big(\overline{\psi(\zeta)}\big)^n = \sum_{|\beta|=n} \binom{n}{\beta} \overline{\zeta^{\beta}}.\]
	We only get a contribution to \eqref{eq:vanish} for $\alpha=\beta+(-1,-1,3)$, when
	\[\binom{n+1}{\alpha} \binom{n}{\beta} = (n+1)\binom{n}{\beta}^2 \frac{\beta_1 \beta_2}{(\beta_3+1)(\beta_3+2)(\beta_3+3)}.\]
	This shows that 
	\begin{equation}\label{eq:Phin1} 
		\Phi(2(n+1)) = (n+1) \sum_{|\beta|=n} \binom{n}{\beta}^2 \frac{\beta_1 \beta_2}{(\beta_3+1)(\beta_3+2)(\beta_3+3)}. 
	\end{equation}
	Note that the numerator $\beta_1 \beta_2$ ensures that $\Phi(2)=\Phi(4)=0$, so $\varphi$ is a Hilbert point in $H^2(\mathbb{T}^3)$ and $H^4(\mathbb{T}^3)$. It is also clear that $\Phi(2(n+1))>0$ for every integer $n\geq2$. We need an analytic expression for \eqref{eq:Phin1}. This can be established directly using Bergman norms when $n\geq2$. If we write $\psi_n(w)=(w_1+w_2+w_3)^n$, then 
	\begin{align*}
		\Phi(2(n+1)) &= \frac{(n+1)}{3!} \int_{\mathbb{D}^3} \left|\frac{\partial^2}{\partial w_1 \partial w_2} \psi_n(w)\right|^2\, 3(1-|w_3|^2)^2 \,dA_3(w) \\
		&= \frac{(n+1) n^2 (n-1)^2}{3!} \int_{\mathbb{D}^3} |\psi_{n-2}(w)|^2 \, 3(1-|w_3|^2)^2 \,dA_3(w), 
	\end{align*}
	where $dA_3(w):= dA(w_1)dA(w_2)dA(w_3)$. Returning to \eqref{eq:Fcoeff}, we have established the identity 
	\begin{equation}\label{eq:Phiident} 
		\Phi(p) = \binom{p/2}{3} \frac{(p-2)(p-4)}{4} \int_{\mathbb{D}^3} |w_1+w_2+w_3|^{p-6}\, 3(1-|w_3|^2)^2 \,dA_3(w) 
	\end{equation}
	when $p>4$ is a positive even integer. Since the sequence of positive integers violates the Blaschke condition in the right half-plane and since $\Phi$ grows at most exponentially as $p\to\infty$, it follows by analytic continuation that \eqref{eq:Phiident} is valid also for non-integer $p>4$. It breaks down at $p=4$ because integrability fails. The expression on the right-hand side of \eqref{eq:Phiident} is positive for $p>4$, so we get that the Fourier coefficient \eqref{eq:Fcoeff} does not vanish. Hence $\varphi$ is not a Hilbert point in $H^p(\mathbb{T}^3)$ for $p>4$. 
\end{proof}
\begin{remark}
	One could offer a rigorous computer assisted proof that $\Phi(p)\ne 0$ also when $1\leq p<4$, $p\neq 2$, by estimating the integral in \eqref{eq:vanish} using interval arithmetic in the intervals $[1,2)$ and $(2,4)$ and analyzing separately the behavior near $p = 2$ and $p = 4$. 
\end{remark}

We believe that the Fourier coefficients of 
\begin{equation}\label{eq:d3coeff} 
	|\zeta_1+\zeta_2+\zeta_3|^{p-2}(\zeta_1+\zeta_2+\zeta_3) = \sum_{|\alpha|=1} c_p(\alpha) \zeta^\alpha 
\end{equation}
may be of some independent interest. In the proof of Theorem~\ref{thm:orthocex} we investigated the Fourier coefficient corresponding to $\alpha=(-1,-1,3)$.

Our interest in \eqref{eq:d3coeff} stems from the fact that when $d=2$, we have easy access to all the corresponding Fourier coefficients. By a computation in \cite[Sec.~3]{Brevig19}, it follows that 
\begin{equation}\label{eq:ole} 
	|\zeta_1+\zeta_2|^{p-2} (\zeta_1+\zeta_2) = \sum_{|\alpha|=1} \frac{\Gamma(p)}{\Gamma(p/2+\alpha_1)\Gamma(p/2+\alpha_2)} \zeta^\alpha. 
\end{equation}
Let us sketch a different proof of \eqref{eq:ole} in the spirit of Theorem~\ref{thm:orthocex}. We consider first $p=2n$ for a positive integer $n$ and write 
\begin{equation}\label{eq:binomial} 
	|\zeta_1+\zeta_2|^{2n-2}(\zeta_1+\zeta_2) = \overline{(\zeta_1\zeta_2)^n} (\zeta_1+\zeta_2)^{2n-1} = \sum_{j=0}^{2n-1} \binom{2n-1}{j}z_1^{n-1-j}z_2^{j-n}, 
\end{equation}
to establish \eqref{eq:ole} when $p$ is an even integer. By analytic continuation as in the proof of Theorem~\ref{thm:orthocex}, we obtain \eqref{eq:d2rec} for $1 \leq p < \infty$. This proof shows that if $p=2n$, then the nonzero Fourier coefficients in \eqref{eq:ole} are precisely the entries in row $2n-1$ of Pascal's triangle. Since $|\zeta_1+\zeta_2|^2 = 2 + \zeta_1\overline{\zeta_2}+\overline{\zeta_1}\zeta_2$, we see that 
\begin{equation}\label{eq:d2rec} 
	c_{p+2}(\alpha_1,\alpha_2) = 2 c_p(\alpha_1,\alpha_2) + c_p(\alpha_1-1,\alpha_2+1) + c_p(\alpha_1+1,\alpha_2-1), 
\end{equation}
where $\alpha_1+\alpha_2=1$. If $p=2n$, then the recursion \eqref{eq:d2rec} corresponds to the three applications of Pascal's formula need to go from row $2n-1$ to row $2n+1$.

Returning to \eqref{eq:d3coeff}, we similarly expand $|\zeta_1+\zeta_2+\zeta_3|^2$ to get the recursion 
\begin{align*}
	c_{p+2}(\alpha_1,\alpha_2,\alpha_3) = 3c_p(\alpha_1,\alpha_2,\alpha_3) &+ c_p(\alpha_1+1,\alpha_2-1,\alpha_3) + c_p(\alpha_1+1,\alpha_2,\alpha_3-1) \\
	&+ c_p(\alpha_1-1,\alpha_2+1,\alpha_3) + c_p(\alpha_1,\alpha_2+1,\alpha_3-1) \\
	&+ c_p(\alpha_1-1,\alpha_2,\alpha_3+1) + c_p(\alpha_1,\alpha_2-1,\alpha_3+1) 
\end{align*}
where $\alpha_1+\alpha_2+\alpha_3=1$. Specializing to the case $p=2n$ for positive integers $n$ as above, we observe that the nonzero Fourier coefficients in \eqref{eq:d3coeff} correspond precisely to the entries in the slice $2n-1$ of the hexagonal Pascal's pyramid. We refer to the On-Line Encyclopedia of Integer Sequences \cite{OEIS} and note that our numbering of the slices differs by $1$. The numbers in the hexagonal Pascal's pyramid does not have a known closed form similar to the binomials appearing in \eqref{eq:binomial}, so it is not clear how to proceed to get a formula for general $1 \leq p < \infty$.

Further examples can be generated starting from any of the sets found in \cite[Sec.~3]{BOS21}. It is clear that for every $n>1$ we could construct functions that are Hilbert points in $H^p(\mathbb T^4)$ for $p=2, 4,\ldots , 2n$ and ``most likely'' for no other $p$ in the range $1\le p \le \infty$. We use quotation marks here to indicate that verifying rigorously the latter assertion would be difficult if not impossible.

Beyond inner functions, we have so far only seen polynomial Hilbert points in $H^p(\mathbb T^d)$ for $p\neq 2$, which reflects that our understanding of the general situation is very limited. We do not know, for instance, whether there exists an unbounded Hilbert point in $H^p(\mathbb T^d)$ for some $p\neq 2$. It remains also to be seen whether Hilbert points in $H^p(\mathbb T^d)$ may have an operator theoretic role to play when $d\ge 2$, as they 
do with such distinction when $d=1$, in view of Beurling's theorem.
\bibliographystyle{amsplain} 
\bibliography{cpoints}

\end{document}